\theoremstyle{plain}
\newtheorem{thm}{Theorem}[section]
\newtheorem{lem}[thm]{Lemma}
\newtheorem{pro}[thm]{Proposition}
\newtheorem{lemma}[thm]{Lemma}
\theoremstyle{remark}
\newtheorem{rem}[thm]{Remark}
\newtheorem*{question*}{Question}
\newtheorem*{acknowledgements}{Acknowledgements}
\newcommand{\N}{\mathbb{N}}
\newcommand{\Z}{\mathbb{Z}}
\newcommand{\Q}{\mathbb{Q}}
\newcommand{\F}{\mathbb{F}}
\newcommand{\C}{\mathbb{C}}
\newcommand{\R}{\mathbb{R}}
\newcommand{\p}{\mathfrak{p}}
\renewcommand{\epsilon}{\varepsilon}
\renewcommand{\phi}{\varphi}
\DeclareMathOperator{\tr}{tr}
\DeclareMathOperator{\arcosh}{arcosh}
\def \p {\ensuremath{\mathfrak{p}}}
\author{Michael M.~Schein} \address{Department of Mathematics,
  Bar-Ilan University, Ramat Gan 5290002,
  Israel}
 \email{mschein@math.biu.ac.il}
\author{Amir Shoan} \address{Department of Mathematics,
  Bar-Ilan University, Ramat Gan 5290002,
  Israel}
\email{shoanam@macs.biu.ac.il}
\thanks{The first author was supported by the Germany-Israel Foundation for Scientific Research and Development under grant 1246/2014 during part of this work.}
\begin{document}

 \title[Systolic length of triangular modular curves]{Systolic length of triangular modular curves} 
 
 \maketitle
 \begin{abstract}
We present a method for computing upper bounds on the systolic length of certain Riemann surfaces uniformized by congruence subgroups of hyperbolic triangle groups, admitting congruence Hurwitz curves as a special case.  The uniformizing group is realized as a Fuchsian group and a convenient finite generating set is computed.  The upper bound is derived from the traces of the generators.  Some explicit computations, including ones for non-arithmetic surfaces, are given.  We apply a result of Cosac and D\'{o}ria to show that the systolic length grows logarithmically with respect to the genus.
\end{abstract}
\section{Introduction} \label{sec:intro}
\subsection{Overview}
This article presents a method for using computations with quaternion algebras over totally real fields to determine explicit upper bounds for the systolic lengths of certain Riemann surfaces, namely the Galois-Belyi curves constructed by Clark and Voight~\cite{CV/19}.  These are also referred to as triangular modular curves.  The most celebrated members of this family are the congruence Hurwitz curves.  We generalize the treatment in~\cite{KKSV/16} of a family of Riemann surfaces related to the Bolza surface and extend the methods of~\cite{KKSV/16} to some non-arithmetic surfaces.

Let $X$ be a compact Riemann surface.  Our main object of interest, the systolic length of $X$, is denoted $\mathrm{sys}(X)$ and is the minimal length of a non-contractible closed geodesic, or systole, of $X$.  
Let $\mathcal{H}$ be the upper half plane with the usual hyperbolic metric.  The group $\mathrm{SL}_2(\R)$, via its quotient $\mathrm{PSL}_2(\R)$, acts on $\mathcal{H}$ by M\"{o}bius transformations.  Every hyperbolic Riemann surface arises as a quotient $X_\Gamma = \Gamma \backslash \mathcal{H}$, where $\Gamma \leq \mathrm{PSL}_2(\R)$ is a discrete subgroup (i.e.~a Fuchsian group) acting on $\mathcal{H}$ without fixed points.  As we will see in Section~\ref{sec:traces} below, bounding the systolic length $\mathrm{sys}(X_\Gamma)$ amounts to finding an upper bound for the minimal trace of a non-trivial element of $\Gamma$.

If $\Gamma$ is a non-arithmetic Fuchsian group, there is generally no natural construction of congruence subgroups of $\Gamma$.  However, if $\Gamma = \overline{\Delta}(a,b,c) = \langle x, y \rangle / (x^a = y^b = (xy)^c = 1)$ is a hyperbolic triangle group, then Clark and Voight~\cite{CV/19} define congruence subgroups $\Gamma(I) \trianglelefteq \Gamma$ of finite index, where $I$ runs over ideals of a  Dedekind domain associated to $\Gamma$, and describe the quotient $\Gamma / \Gamma(I)$.  If $\Gamma = \overline{\Delta}(2,3,7)$, then the Riemann surfaces $X_{\Gamma(I)}$ are congruence Hurwitz curves.  For all but finitely many triples $(a,b,c)$, the group $\overline{\Delta}(a,b,c)$ is non-arithmetic.

To estimate $\mathrm{sys}(X_{\Gamma(I)})$, where $\Gamma$ is a hyperbolic triangle group, we compute a finite set of Schreier generators of $\Gamma(I)$.  We then embed $\Gamma(I)$ into $\mathrm{PSL}_2(\R)$ explicitly and consider the minimum among the traces of the images of the Schreier generators.  

A brief overview of the paper follows.
Section~\ref{sec:traces} describes the connection between the systolic length of $X_\Gamma$ and the traces of elements of $\Gamma$.
Section~\ref{sec:triangle.groups} is an exposition of work by previous authors, whose aim is to present the necessary background material in a form useful for our applications.  Some properties of triangle groups and of their congruence subgroups are recalled.  Proposition~\ref{pro:alpha.beta.fulldelta}, which restates a result of Takeuchi~\cite{Takeuchi/77JFS}, provides an embedding $\Gamma \hookrightarrow \mathcal{O}^1 / \{ \pm 1 \}$, where $\mathcal{O}^1$ is the group of elements of reduced norm one in an explicit order $\mathcal{O}$ of an explicit quaternion algebra over a totally real field.  The explicit quaternion order of Proposition~\ref{pro:presentation.a} realizes $\Gamma$ as a Fuchsian group when it is arithmetic.  This construction generalizes ones used, in special cases, in~\cite{KKSV/16, KSV/11} and is useful to us in non-arithmetic cases as well.  Proposition~\ref{pro:counting.normal.subgroups}, using Macbeath's classification~\cite{Macbeath/69} of subgroups of $\mathrm{PSL}_2(\F_q)$ and work of Clark and Voight~\cite{CV/19}, bounds the number of normal subgroups of $\Gamma$ admitting certain finite quotients.  These bounds simplify the computations of Section~\ref{sec:computations} in some cases.  
Section~\ref{sec:semiarithmetic} uses recent work of Cosac and D\'{o}ria~\cite{CosacDoria/20} to prove (Theorem~\ref{thm:semiarithmetic}) that $\mathrm{sys}(X_{\Gamma(I)})$ grows at least logarithmically with respect to the genus of $X_{\Gamma(I)}$ when at least two of $a,b,c$ are odd.  
Section~\ref{sec:computations} presents our method of computation and some examples.  Directions to a Github repository of our code are provided.  In particular, we bound the systolic lengths of curves associated to congruence subgroups for ideals of small norm in the arithmetic triangle groups $\overline{\Delta}(2,3,7)$, $\overline{\Delta}(2,3,8)$, $\overline{\Delta}(2,3,12)$, $\overline{\Delta}(2,7,7)$, as well as in the non-arithmetic $\overline{\Delta}(3,3,10)$.  The first two of these examples were previously considered by Katz, Schaps, and Vishne~\cite{KSV/07, KSV/11} and by Katz, Katz, the first author, and Vishne~\cite{KKSV/16}, respectively; we recover constructions from those papers.  Our upper bound is equal to the exact value of $\mathrm{sys}(X_{\Gamma(I)})$ in all cases, among the examples considered, where this value is known.

\subsection{Traces and geodesic lengths} \label{sec:traces}
If $\gamma \in \mathrm{PSL}_2 (\R)$, then the trace $\tr \gamma$ is defined up to sign, so the absolute value $| \tr \gamma |$ makes sense; the same is true for the eigenvalues of $\gamma$.  If $ | \tr \gamma |  > 2$, then $\gamma$ is called hyperbolic; in this case, a simple calculation shows that $\gamma$ has no fixed points in the interior of $\mathcal{H}$ and two distinct fixed points on the boundary of $\mathcal{H}$.  If the Fuchsian group $\Gamma \leq \mathrm{PSL}_2(\R)$ is torsion-free, then there is a bijection between conjugacy classes of hyperbolic elements of $\Gamma$ and closed geodesics of $X_\Gamma$.  Indeed, if $\gamma \in \Gamma$ is hyperbolic, the unique geodesic of $\mathcal{H}$ connecting the two fixed points of $\gamma$ is called the axis of $\gamma$; clearly it is preserved by the action of $\gamma$.  Fix a point $P$ on the axis of $\gamma$.  The geodesic of $\mathcal{H}$ passing through $P$ and $\gamma P$ descends to a closed geodesic $\delta_\gamma$ of $X_\Gamma$.  One checks that this construction is independent of the choice of $P$ and depends only on the conjugacy class of $\gamma$.  Clearly, any closed geodesic of $X_\Gamma$ arises in this way.  Moreover, the length $\ell(\delta_\gamma)$ of the geodesic $\delta_\gamma$ satisfies $e^{\ell(\delta_\gamma)/2} = | \lambda_\gamma |$, where $\lambda_\gamma$ is the unique eigenvalue of $\gamma$ such that $ | \lambda_\gamma | > 1$.  It follows that
$$ 2 \cosh \frac{\ell(\delta_\gamma)}{2} = e^{\ell(\delta_\gamma)/2} + e^{- \ell (\delta_\gamma)/2} = | \lambda_\gamma | + | \lambda_\gamma |^{-1} = | \tr \gamma |.$$
Hence, letting $\mathrm{Hyp}(\Gamma)$ denote the set of hyperbolic elements of $\Gamma$, we have
\begin{equation*}
\mathrm{sys}(X_\Gamma) = \min_{\gamma \in \mathrm{Hyp}(\Gamma)} \ell(\delta_\gamma) = \min_{\gamma \in \mathrm{Hyp}(\Gamma)} 2 \arcosh \frac{| \tr \gamma |}{2}.
\end{equation*}
Bounding the systolic length of $X_\Gamma$ thus amounts to bounding the traces of hyperbolic elements of the Fuchsian group $\Gamma$.  In particular, if $\Gamma^\prime \leq \Gamma$, then $\mathrm{sys}(X_{\Gamma^\prime}) \geq \mathrm{sys}(X_\Gamma)$.

\section{Triangle groups} \label{sec:triangle.groups}
\subsection{Definition} 
Suppose that $a,b,c \in \Z \cup \{ \infty \}$ satisfy $2 \leq a,b,c$.  The triangle group $\overline{\Delta}(a,b,c)$ is given by the presentation
\begin{equation} \label{equ:triangle.group}
\overline{\Delta}(a,b,c) = \langle x, y \rangle / (x^a = y^b = (xy)^c = 1).
\end{equation}
The extended triangle group $\Delta(a,b,c)$ is defined as
\begin{equation} \label{equ:delta.presentation}
\Delta(a,b,c) = \langle x, y, z, -1 \rangle / \left({x^a = y^b = z^c = xyz = -1, (-1)^2 = 1 \atop -1 \in Z(\Delta(a,b,c))}\right).
\end{equation}
Any permutation of $a,b,c$ in either presentation gives rise to an isomorphic group, so we make the convention that $2 \leq a \leq b \leq c$.
The triple $(a,b,c)$ is classified as spherical, Euclidean, or hyperbolic if the quantity $\frac{1}{a} + \frac{1}{b} + \frac{1}{c}$ is greater than, equal to, or less than $1$, respectively.  If $(a,b,c)$ is a hyperbolic triple, then it has been known at least since the 1930's~\cite[\S1]{Petersson/37} that there is an embedding $\iota: \Delta(a,b,c) \hookrightarrow \mathrm{SL}_2(\R)$ determined by
\begin{eqnarray} \label{equ:iota.embedding}
\iota (x) & = & \left( \begin{array}{cc} \cos \frac{\pi}{a} & \sin \frac{\pi}{a} \\ - \sin \frac{\pi}{a} & \cos \frac{\pi}{a} \end{array} \right),  \\ \nonumber
\iota (y) & = & \left( \begin{array}{cc} 1 & 0 \\ 0 & t \end{array} \right)^{-1} 
\left( \begin{array}{cc} \cos \frac{\pi}{b} & \sin \frac{\pi}{b} \\ - \sin \frac{\pi}{b} & \cos \frac{\pi}{b} \end{array} \right)
\left( \begin{array}{cc} 1 & 0 \\ 0 & t \end{array} \right),
\end{eqnarray}
where $t$ is a root of the quadratic equation 
$$ t^2 - 2 \left( \frac{\cos \frac{\pi}{a} \cos \frac{\pi}{b} + \cos \frac{\pi}{c}}{\sin \frac{\pi}{a} \sin \frac{\pi}{b}} \right) t + 1 = 0;$$
the roots are real when $(a,b,c)$ is hyperbolic.  The induced embedding $\overline{\Delta}(a,b,c) \hookrightarrow \mathrm{PSL}_2(\R)$ realizes $\overline{\Delta}(a,b,c)$ as a Fuchsian group.  If $b < \infty$, then Takeuchi~\cite[Proposition~1]{Takeuchi/77} showed that any embedding of $\Delta(a,b,c)$ into $\mathrm{SL}_2(\R)$ is conjugate to $\iota$.  In particular, if $\Gamma \leq \mathrm{SL}_2(\R)$ satisfies $\Gamma \simeq \Delta(a,b,c)$, then $\mathrm{sys}(X_\Gamma)$ depends only on the triple $(a,b,c)$.

Consider the full triangle group $\widetilde{\Delta}(a,b,c)$ generated by reflections across the sides of a triangle with angles $\frac{\pi}{a}$, $\frac{\pi}{b}$, $\frac{\pi}{c}$.  It is well-known to have the presentation
\begin{equation} \label{equ:full.triangle.group}
\widetilde{\Delta}(a,b,c) = \langle \widetilde{x}, \widetilde{y}, \widetilde{z} \rangle / (\widetilde{x}^2 = \widetilde{y}^2 = \widetilde{z}^2 = (\widetilde{x} \widetilde{y})^a = (\widetilde{y} \widetilde{z})^b = (\widetilde{z} \widetilde{x})^c = 1 ).
\end{equation}
It is easy to see that $\overline{\Delta}(a,b,c)$ embeds in $\widetilde{\Delta}(a,b,c)$ as the subgroup of index two of orientation-preserving transformations.  An embedding is given by $(x,y) \mapsto (\widetilde{x} \widetilde{y}, \widetilde{y} \widetilde{z}, \widetilde{z} \widetilde{x})$.

\subsection{Quaternion algebras}
In a series of papers in the 1970's~\cite{Takeuchi/69, Takeuchi/75, Takeuchi/77, Takeuchi/77JFS}, Takeuchi determined~\cite[Theorem~3]{Takeuchi/77} with computer assistance that there are precisely $85$ triples $(a,b,c)$ for which $\overline{\Delta}(a,b,c)$ is arithmetic.  Moreover, he provided an explicit construction of orders in quaternion algebras that realize this arithmeticity.  In this section, we give a brief overview of Takeuchi's work, stated in the explicit form needed for our computations.  As is standard, for a field $F$ we denote by $\left\langle \frac{x,y}{F} \right\rangle$ the $F$-algebra spanned by $\{1, i, j, ij \}$, with multiplication determined by the relations $i^2 = x, j^2 = y, ij = - ji$.  If $F$ is a totally real number field, then this algebra ramifies at an embedding $v : F \hookrightarrow \R$ if and only if $v(x) < 0$ and $v(y) < 0$.

Let $\Gamma \leq \mathrm{SL}_2(\R)$ be any discrete subgroup whose image in $\mathrm{PSL}_2(\R)$ has finite covolume.  Let $F_\Gamma$ be the field generated over $\Q$ by the traces of the elements of $\Gamma$. Under the hypotheses above, $F_\Gamma$ is a number field; by construction, it is equipped with a distinguished embedding $v_0: F_\tau \hookrightarrow \R$.  By Propositions~2 and~3 of~\cite{Takeuchi/69}, the $F_\Gamma$-vector subspace $F_\Gamma[\Gamma] \subseteq M_2(\R)$ is a quaternion algebra over $F_\Gamma$; if the traces of all elements of $\Gamma$ are algebraic integers, then $\mathcal{O}_{F_\Gamma} [\Gamma]$ is an order of $F_\Gamma[\Gamma]$.  

We assume henceforth that $\tau = (a,b,c)$ is a hyperbolic triple with $c < \infty$, since this hypothesis is needed for the explicit presentations of Propositions~\ref{pro:alpha.beta.fulldelta} and~\ref{pro:presentation.a} below.  See~\cite[\S5]{CV/19} for some treatment of the case $c = \infty$.  If we take $\Gamma = \iota(\Delta(\tau)) \leq \mathrm{SL}_2(\R)$, where $\iota$ is the embedding of~\eqref{equ:iota.embedding}, then
\begin{equation*}
F_\Gamma = F_{\iota(\Delta(\tau))} = \Q \left( \cos \frac{\pi}{a}, \cos \frac{\pi}{b}, \cos \frac{\pi}{c} \right).
\end{equation*}
We denote this field by $F_\tau$.  
It is clear from~\cite[Remark~5.24]{CV/19} that the quaternion algebra $B_\tau = F_\Gamma [\Gamma] = F_{\iota(\Delta(\tau))}[\iota(\Delta(\tau))]$ has the presentations
\begin{equation} \label{equ:b.presentation}
B_\tau = \left\langle \frac{4 \cos^2 \left( \frac{\pi}{s} \right) - 4, \delta}{F_\tau} \right\rangle,
\end{equation}
where $s$ is any element of $\{a,b,c \}$ and 
\begin{equation*}
\delta = 4 \left( \cos^2 \frac{\pi}{a} + \cos^2 \frac{\pi}{b} + \cos^2 \frac{\pi}{c} + 2 \cos \frac{\pi}{a} \cos \frac{\pi}{b} \cos \frac{\pi}{c} - 1 \right).
\end{equation*}
\begin{rem} \label{rmk:chebyshev}
Observe that $2 \cos \frac{\pi}{n} = \zeta_{2n} + \zeta_{2n}^{-1}$, where $\zeta_{2n}$ is a suitable primitive $2n$-th root of unity.  Hence $2 \cos \frac{\pi}{n}$ is an algebraic integer.  Consequently, $ \delta \in \mathcal{O}_{F_{\tau}}$; this is the reason for the factor of $4$ in the definition of $\delta$.  Note that $v_0(\delta) > 0$ whenever $\tau$ is hyperbolic.

Moreover, for any $m \in \N$ observe that $\cos \frac{m \pi}{n} = T_m \left( \cos \frac{\pi}{n} \right)$, where $T_m(x) \in \Z[x]$ is the Chebyshev polynomial of degree $m$.  Similarly, if $(m,n) = 1$, then $\cos \frac{\pi}{n}$ can be expressed as a polynomial in $\cos \frac{m \pi}{n}$.  It follows that $F_\tau = \Q \left( \cos \frac{m_a \pi}{a}, \cos \frac{m_b \pi}{b}, \cos \frac{m_c \pi}{c} \right)$ for any $m_a, m_b, m_c$ satisfying $(m_a, a) = (m_b, b) = (m_c, c) = 1$.  
\end{rem}
\begin{pro} \label{pro:alpha.beta.fulldelta}
Consider the elements $\alpha = 1 \cdot \iota(x)$ and $\beta = 1 \cdot \iota(y)$ of $B_\tau$, where $x,y \in \Delta(\tau)$ are as in~\eqref{equ:delta.presentation}.  Then $\{1, \alpha, \beta, \alpha \beta \}$ is an $\mathcal{O}_{F_\tau}$-basis of the order $\mathcal{O}_\tau = \mathcal{O}_{F_\tau} [\iota(\Delta(\tau))] \subset B_\tau$.  Moreover, there exist elements $i,j \in B_\tau$ providing
the presentation $B_\tau = \left\langle \frac{4 \cos^2 \left( \frac{\pi}{a} \right) - 4, \delta}{F_\tau} \right\rangle$, for which
\begin{eqnarray*}
\alpha & = & \cos \frac{\pi}{a} + \frac{1}{2} i \\
\beta & = & \cos \frac{\pi}{b} - \frac{\cos \frac{\pi}{a} \cos \frac{\pi}{b} + \cos \frac{\pi}{c}}{2 \left( \cos^2 \left( \frac{\pi}{a} \right) - 1 \right)} i + \frac{1}{4 \left( \cos^2 \left( \frac{\pi}{a} \right) - 1 \right)} ij.
\end{eqnarray*}
\end{pro}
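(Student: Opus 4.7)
The plan is to construct $i, j \in B_\tau$ satisfying the stated quaternion relations, then express $\alpha$ and $\beta$ in the basis $\{1, i, j, ij\}$, and finally argue that $\{1, \alpha, \beta, \alpha\beta\}$ generates the order $\mathcal{O}_\tau$.

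First I set $i := 2\alpha - 2\cos(\pi/a)$. Since $\iota(x) = \alpha$ has trace $2\cos(\pi/a)$ and determinant $1$, it satisfies $\alpha^2 - 2\cos(\pi/a)\alpha + 1 = 0$, hence $i^2 = 4\cos^2(\pi/a) - 4$. To produce $j$, I note that the relations $xyz = z^c = -1$ in $\Delta(\tau)$ force $\iota(z) = -(\alpha\beta)^{-1}$, whence $\tr(\alpha\beta) = -2\cos(\pi/c)$. Writing $A := \alpha - \cos(\pi/a)$ and $B := \beta - \cos(\pi/b)$ for the traceless parts, a direct expansion then gives $\tr(AB) = -2(\cos(\pi/a)\cos(\pi/b) + \cos(\pi/c))$. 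Using the standard identity $uv + vu = \tr(uv)$ for traceless elements of a quaternion algebra, I Gram--Schmidt orthogonalize by setting
$$B' := B + \frac{\cos(\pi/a)\cos(\pi/b) + \cos(\pi/c)}{\cos^2(\pi/a) - 1}\, A,$$
so that $\tr(AB') = 0$ and hence $AB' + B'A = 0$. Setting $j := iB' = 2AB'$ then yields $ij + ji = 0$ automatically.

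The central computation is $j^2 = \delta$. Since $B'$ anticommutes with $i$, we have $j^2 = (iB')^2 = -i^2 (B')^2$, and expanding $(B')^2$ using $A^2 = \cos^2(\pi/a) - 1$, $B^2 = \cos^2(\pi/b) - 1$, and $AB + BA = \tr(AB)$ reduces everything to the trigonometric identity
$$(\cos^2(\pi/a) - 1)(\cos^2(\pi/b) - 1) - (\cos(\pi/a)\cos(\pi/b) + \cos(\pi/c))^2 = -\delta/4,$$
which holds by direct expansion from the definition of $\delta$. This establishes the presentation~\eqref{equ:b.presentation} of $B_\tau$ with $s = a$. The explicit formulas for $\alpha$ and $\beta$ in the basis $\{1, i, j, ij\}$ then follow by substituting $A = i/2$ and $B' = ij/(4\cos^2(\pi/a) - 4)$ (obtained from $ij = i^2 B'$) into $\alpha = \cos(\pi/a) + A$ and $\beta = \cos(\pi/b) + B' - \frac{\cos(\pi/a)\cos(\pi/b) + \cos(\pi/c)}{\cos^2(\pi/a) - 1}\, A$.

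For the basis claim, $\mathcal{O}_\tau = \mathcal{O}_{F_\tau}[\iota(\Delta(\tau))]$ is generated as an $\mathcal{O}_{F_\tau}$-module by words in $\alpha^{\pm 1}, \beta^{\pm 1}$, using $\iota(z) = -(\alpha\beta)^{-1}$ and centrality of $-1$. Since $\alpha^{-1} = 2\cos(\pi/a) - \alpha$ and $\beta^{-1} = 2\cos(\pi/b) - \beta$ lie in $\mathcal{O}_{F_\tau}[\alpha, \beta]$ by Remark~\ref{rmk:chebyshev}, we may restrict to positive words. The minimal polynomials for $\alpha$ and $\beta$ combined with
$$\beta\alpha = -2\cos(\pi/c) - 4\cos(\pi/a)\cos(\pi/b) + 2\cos(\pi/a)\beta + 2\cos(\pi/b)\alpha - \alpha\beta,$$
derived from $(\alpha\beta)^2 + 2\cos(\pi/c)\alpha\beta + 1 = 0$, permit an induction on word length that reduces any monomial to the $\mathcal{O}_{F_\tau}$-span of $\{1, \alpha, \beta, \alpha\beta\}$; $F_\tau$-linear independence then follows by a dimension count against the four-dimensional algebra $B_\tau$. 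The main technical obstacle is the trigonometric identity for $j^2 = \delta$ together with careful sign bookkeeping arising from the relation $xyz = -1$ in $\Delta(\tau)$ (as opposed to $\overline{\Delta}(\tau)$); the remaining steps are routine manipulation.
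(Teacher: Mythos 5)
Your proof is correct, but it takes a genuinely different route from the paper: the paper's proof of Proposition~\ref{pro:alpha.beta.fulldelta} consists entirely of citations to Clark--Voight (the basis claim to the discussion preceding \cite[Lemma~5.4]{CV/19}, the presentation to \cite[Remark~5.24]{CV/19}, together with a correction of a typo in the latter), whereas you rederive everything from scratch. Your construction --- taking $i = 2\alpha - 2\cos\frac{\pi}{a}$ as twice the traceless part of $\alpha$, Gram--Schmidt orthogonalizing the traceless part of $\beta$ against it via the bilinear form $uv+vu=\tr(uv)$, and setting $j = iB'$ --- is the standard way such presentations are produced, and your verification that $j^2=\delta$ reduces correctly to the identity $\bigl(\cos^2\frac{\pi}{a}-1\bigr)\bigl(\cos^2\frac{\pi}{b}-1\bigr)-\bigl(\cos\frac{\pi}{a}\cos\frac{\pi}{b}+\cos\frac{\pi}{c}\bigr)^2=-\delta/4$, which does hold. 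The coefficients of $\alpha$ and $\beta$ in the basis $\{1,i,j,ij\}$ come out exactly as in the statement, and your Reidemeister-style rewriting argument for the $\mathcal{O}_{F_\tau}$-basis claim (using $\alpha^{-1}=2\cos\frac{\pi}{a}-\alpha$, the quadratic relations, and the formula for $\beta\alpha$, all with coefficients in $\mathcal{O}_{F_\tau}$ by Remark~\ref{rmk:chebyshev}) is sound, with linear independence following from the four-dimensionality of $B_\tau$ established by Takeuchi. What your approach buys is a self-contained, checkable derivation that makes the typo-correction in the paper's proof unnecessary; what the paper's approach buys is brevity and consistency with the notation of \cite{CV/19}, on which the rest of Section~\ref{sec:triangle.groups} leans heavily.

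One small point of rigor: the relation $z^c=-1$ alone only forces $\tr\,\iota(z)=2\cos\frac{k\pi}{c}$ for some odd $k$, not $k=1$; the precise value $\tr(\alpha\beta)=-2\cos\frac{\pi}{c}$ should be read off from the explicit matrices in~\eqref{equ:iota.embedding} (a one-line computation using the quadratic satisfied by $t$), or quoted from the sentence following the proposition in the paper. You should also remark that $j\neq 0$, which follows from $j^2=\delta\neq 0$ since $v_0(\delta)>0$ for hyperbolic $\tau$, so that $\{1,i,j,ij\}$ really does realize the claimed presentation. Neither issue affects the validity of the argument.
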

\begin{proof}
The first claim is established in the discussion preceding~\cite[Lemma~5.4]{CV/19}.  The second follows easily from~\cite[Remark~5.24]{CV/19}, noting that the third element in the list given there of  elements of $B_\tau$
realizing the presentation above should read $(\lambda_{2a}^2 - 4)\delta_b + (\lambda_{2a} \lambda_{2b} + 2 \lambda_{2c})\delta_a - (\lambda_{2a}^2 \lambda_{2b} + \lambda_{2a} \lambda_{2c} - 2 \lambda_{2b})$ in the notation of~\cite{CV/19}.
\end{proof}

\begin{rem} \label{rmk:embedding}
By construction, the elements $\alpha, \beta \in B_\tau$ have reduced norm $1$, inducing embeddings $\Delta(\tau) \hookrightarrow (\mathcal{O}_\tau)^1$ and $\overline{\Delta}(\tau) \hookrightarrow (\mathcal{O}_{\tau})^1 / \{ \pm 1 \}$; here $\mathcal{O}_\tau^1$ is the group of elements of $\mathcal{O}_\tau$ of reduced norm $1$.  Since $v_0(\delta) > 0$, the quaternion algebra $B_\tau$ splits at the distinguished embedding $v_0$.  Thus induces an embedding $\Delta(\tau) \hookrightarrow (\mathcal{O}_\tau)^1 \subseteq (B_\tau)^1 \stackrel{v_0}{\hookrightarrow} \mathrm{SL}_2(\R)$, which we denote $\varepsilon_0$.
\end{rem}

Since the reduced traces of $\alpha$, $\beta$, and $\alpha \beta$ are $2 \cos \frac{\pi}{a}$, $2 \cos \frac{\pi}{b}$, and $-2 \cos \frac{\pi}{c}$, respectively, we verify that $\mathcal{O}_\tau$ is integral.  Indeed,
$$ \alpha \beta = - \cos \frac{\pi}{c} - \frac{\cos \frac{\pi}{b} + \cos \frac{\pi}{a} \cos \frac{\pi}{c}}{2 \left( \cos^2 \frac{\pi}{a} - 1 \right)} i + \frac{1}{2} j + \frac{\cos \frac{\pi}{a}}{4 \left( \cos^2 \frac{\pi}{a} - 1 \right)} ij.$$
We collect some facts about the order $\mathcal{O}_{\tau}$.

\begin{pro}[{\cite[Lemmas~5.4 and~5.5]{CV/19}}] \label{pro:discriminant}
The discriminant of $\mathcal{O}_{\tau}$ is the principal $\mathcal{O}_{F_\tau}$-ideal generated by $ \delta$.  Let $\mathfrak{P} \vartriangleleft \mathcal{O}_{F_\tau}$ be a prime ideal such that $\mathfrak{P} | \delta$.  Then $\mathfrak{P} | 2abc$.  Moreover, if the triple $\tau = (a,b,c)$ is not of the form $(mk, m(k+1), mk(k+1))$ for any $m,k \in \mathbb{N}$, then $\mathfrak{P} | abc$.
\end{pro}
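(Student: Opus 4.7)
The plan is to compute the reduced discriminant of $\mathcal{O}_\tau$ directly from the explicit $\mathcal{O}_{F_\tau}$-basis $\{1,\alpha,\beta,\alpha\beta\}$ furnished by Proposition~\ref{pro:alpha.beta.fulldelta}, and then to extract the two divisibility statements by examining the cyclotomic integers $\lambda_{2n} := 2\cos\tfrac{\pi}{n} = \zeta_{2n}+\zeta_{2n}^{-1}$ locally at primes of $\mathcal{O}_{F_\tau}$.

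For the first assertion, I would form the $4\times 4$ Gram matrix $G = (\operatorname{trd}(e_i e_j))_{i,j}$ associated with the basis $\{e_1,e_2,e_3,e_4\} = \{1,\alpha,\beta,\alpha\beta\}$; by definition, $\det G$ equals the square of the reduced discriminant of $\mathcal{O}_\tau$. Since $\alpha$, $\beta$, $\alpha\beta$ arise via $\iota$ from elements of $\Delta(\tau)$ of orders $2a$, $2b$, $2c$ and each has reduced norm $1$, their reduced traces are $\lambda_{2a}$, $\lambda_{2b}$, and $-\lambda_{2c}$, respectively. The remaining entries of $G$ are determined by the trace identities $\operatorname{trd}(u^2) = \operatorname{trd}(u)^2-2$ and $\operatorname{trd}(uv)+\operatorname{trd}(u\bar v) = \operatorname{trd}(u)\operatorname{trd}(v)$ applied to pairs drawn from $\{\alpha,\beta,\alpha\beta\}$; for instance, $\operatorname{trd}(\alpha\cdot\alpha\beta) = \lambda_{2a}\operatorname{trd}(\alpha\beta)-\operatorname{trd}(\beta) = -\lambda_{2a}\lambda_{2c}-\lambda_{2b}$. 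Expanding $\det G$ and simplifying should produce $\delta^2$, whence the reduced discriminant ideal of $\mathcal{O}_\tau$ is $(\delta)$.

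For the divisibility statements, I would argue locally at a prime $\mathfrak{P}\vartriangleleft\mathcal{O}_{F_\tau}$ lying above a rational prime $p$. The identity $\lambda_{2n}^2-4 = \lambda_n - 2 = -(\zeta_n-1)(\zeta_n^{-1}-1)$ shows that $\lambda_{2n}^2-4$ is a $\mathfrak{P}$-adic unit whenever $p\nmid n$, because $\zeta_n-1$ is then a unit in $\mathcal{O}_{F_\tau}[\zeta_n]$ at every prime above $p$. Substituting this into the explicit formula for $\delta$ and analysing the result modulo $\mathfrak{P}$ reduces the implication $\mathfrak{P}\mid\delta \Rightarrow \mathfrak{P}\mid 2abc$ to a routine check. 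The hard part will be the ``moreover'' refinement: one must show that a prime $\mathfrak{P}\mid 2$ with $\mathfrak{P}\nmid abc$ can divide $\delta$ only when $(a,b,c)=(mk,m(k+1),mk(k+1))$ for some $m,k\in\N$. I expect this to come down to a careful mod-$\mathfrak{P}$ computation in which the vanishing of $\delta$ modulo a prime over $2$ translates into a congruence between $\lambda_{2a}$, $\lambda_{2b}$, $\lambda_{2c}$ that forces the triple to lie in this parametrised family, following the strategy of Clark and Voight.
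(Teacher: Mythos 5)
The paper offers no proof of this proposition at all: it is quoted directly from Lemmas~5.4 and~5.5 of \cite{CV/19}, so the only benchmark is the argument in that source. Your first step is essentially that argument for the discriminant computation, and it is sound: with $e_1,\dots,e_4 = 1,\alpha,\beta,\alpha\beta$, the reduced traces $\lambda_{2a}$, $\lambda_{2b}$, $-\lambda_{2c}$ together with the relations $u^2 = \mathrm{trd}(u)\,u - 1$ determine the Gram matrix $\bigl(\mathrm{trd}(e_ie_j)\bigr)$, whose determinant is $\delta^2$, giving $\mathrm{discrd}(\mathcal{O}_\tau) = (\delta)$. (Since Proposition~\ref{pro:alpha.beta.fulldelta} provides a genuine $\mathcal{O}_{F_\tau}$-basis, using the single Gram determinant rather than the ideal generated over all quadruples is legitimate.)

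The second half of your proposal has a genuine gap. The identity $\lambda_{2n}^2-4 = -(\zeta_n-1)(\zeta_n^{-1}-1)$ concerns the \emph{first} slot of the symbol $\left\langle \frac{\lambda_{2a}^2-4,\,\delta}{F_\tau}\right\rangle$; it does not enter the formula for $\delta$ multiplicatively, and ``substituting this into the explicit formula for $\delta$'' cannot work, because $\delta$ is a \emph{sum} $\lambda_{2a}^2+\lambda_{2b}^2+\lambda_{2c}^2+\lambda_{2a}\lambda_{2b}\lambda_{2c}-4$ and knowing the $\mathfrak{P}$-adic valuation of each $\lambda_{2n}^2-4$ says nothing about the valuation of that sum. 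What the divisibility statements actually require is a multiplicative decomposition of $\delta$ itself, namely the classical identity
\begin{equation*}
\delta \;=\; -\prod_{\epsilon} \, 2\cos\frac{\pi}{2}\!\left(\epsilon_1\frac{1}{a}+\epsilon_2\frac{1}{b}+\epsilon_3\frac{1}{c}\right),
\end{equation*}
the product running over the sign patterns $(+,+,+),(-,+,+),(+,-,+),(+,+,-)$. Each factor is, up to a root of unity, of the form $1+\zeta_{2abc}^{\,j}$ with $j = \pm bc \pm ac \pm ab$, hence $1 - \eta$ for a root of unity $\eta$ of order dividing $2abc$; such an element is a non-unit only at primes above the prime $p$ for which that order is a $p$-power, which yields $\mathfrak{P} \mid 2abc$. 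The ``moreover'' clause then reduces to deciding when a factor can be non-trivially divisible by a prime above $2$ not dividing $abc$, which forces one of the exponents $\pm\frac1a\pm\frac1b\pm\frac1c$ into a degenerate position and leads to the condition $\frac1a = \frac1b+\frac1c$, i.e.\ the family $(mk, m(k+1), mk(k+1))$. You explicitly defer this part to ``the strategy of Clark and Voight'' without supplying it, so as written your proposal establishes only the first sentence of the proposition; the factorization of $\delta$ is the missing idea, and without it neither divisibility claim follows.
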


Even if $\overline{\Delta}(\tau)$ is arithmetic, the quaternion algebra $B_\tau$ need not be split at exactly one infinite place of the totally real field $F_\tau$.  Define the subgroup $\Delta^{(2)}(\tau) \leq \Delta(\tau)$ generated by $-1$ and by the set $\{ \gamma^2 : \gamma \in \Delta(\tau) \}$.  This is a normal subgroup satisfying~\cite[(5.9)]{CV/19}
$$ \Delta(\tau) / \Delta^{(2)}(\tau) \simeq \begin{cases}
1 &: \, \text{at least two of} \, a,b,c \, \text{are odd,} \\
\Z / 2\Z &: \text{one of} \, a,b,c \, \text{is odd,} \\
\Z / 2\Z \times \Z / 2\Z &: a,b,c \, \text{are all even or $\infty$.}
\end{cases} 
$$

Now repeat the previous construction for the group $\Gamma = \iota(\Delta^{(2)}(\tau)) \leq \mathrm{SL}_2(\R)$.  By the union of~\cite[Proposition~4]{Takeuchi/75} and~\cite[Proposition~5]{Takeuchi/77}, the trace field is then 
\begin{equation} \label{equ:def.etau}
E_\tau := \Q \left( \cos^2 \frac{\pi}{a}, \cos^2 \frac{\pi}{b}, \cos^2 \frac{\pi}{c}, \cos \frac{\pi}{a} \cos \frac{\pi}{b} \cos \frac{\pi}{c} \right),
\end{equation}
which we view as a subfield of $\R$.
Let $A_\tau$ denote the quaternion algebra $E_\tau[\iota(\Delta^{(2)}(\tau))]$.  Write $\mathcal{Q}_\tau$ for the quaternion order $\mathcal{O}_{E_\tau}[\iota(\Delta^{(2)}(\tau))] \subset A_\tau$.  An explicit description of $A_\tau$ and $\mathcal{Q}_\tau$ is given by
the following analogue of Proposition~\ref{pro:alpha.beta.fulldelta}.
\begin{pro} \label{pro:presentation.a}
Consider the elements $\gamma_1 = 1 \cdot \iota(y^2)$ and $\gamma_2 = 1 \cdot \iota(z^2)$ of $A_\tau$, where $y,z \in \Delta(\tau)$ are as in~\eqref{equ:delta.presentation}.  Then $\{1, \gamma_1, \gamma_2, \gamma_1 \gamma_2 \}$ is an $\mathcal{O}_{E_\tau}$-basis of the order $\mathcal{Q}_\tau = \mathcal{O}_{E_\tau}[\iota(\Delta^{(2)}(\tau))] \subset A_\tau$.  Moreover, there exist elements $i,j \in A_\tau$ realizing the presentation 
\begin{equation*}
 A_\tau \simeq \left \langle \frac{16 \cos^2 \frac{\pi}{b} \left( \cos^2 \left( \frac{\pi}{b} \right) - 1 \right),
16 \delta \cos^2 \frac{\pi}{b} \cos^2 \frac{\pi}{c}}{E_\tau} \right \rangle
\end{equation*}
and satisfying
\begin{eqnarray*}
\gamma_1 & = & \cos \frac{2 \pi}{b} + \frac{i}{2} \\
\gamma_2 & = & \cos \frac{2 \pi}{c} + \frac{\cos \frac{2 \pi}{b} + \cos \frac{2 \pi}{b} \cos \frac{2 \pi}{c} + \cos \frac{2 \pi}{c} + 4 \cos \frac{\pi}{a} \cos \frac{\pi}{b} \cos \frac{\pi}{c} + 1}{2 \left( 1 - \cos^2 \frac{2 \pi}{b} \right)} i + \frac{1}{4 \left( 1 - \cos^2 \frac{2 \pi}{b} \right)} ij.
\end{eqnarray*}
\end{pro}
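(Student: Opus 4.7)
The plan is to follow the template of Proposition~\ref{pro:alpha.beta.fulldelta} closely, interpreting $\gamma_1 = \iota(y^2) = \beta^2$ and $\gamma_2 = \iota(z^2) = (\alpha\beta)^{-2}$ (the latter using $\iota(z) = -(\alpha\beta)^{-1}$, a consequence of $xyz = -1$) as elements of $A_\tau \subseteq B_\tau$, and then to rewrite them in a new quaternion basis of $A_\tau$ over $E_\tau$. Both inclusions $A_\tau \subseteq B_\tau$ and $\mathcal{Q}_\tau \subseteq \mathcal{O}_\tau$, as well as the identification of $E_\tau$ in~\eqref{equ:def.etau} as the trace field, come from the cited results of Takeuchi~\cite{Takeuchi/75, Takeuchi/77}.

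For the basis claim I would argue as in the discussion preceding~\cite[Lemma~5.4]{CV/19}. Since $A_\tau$ is four-dimensional over $E_\tau$, it suffices to show that $\{1, \gamma_1, \gamma_2, \gamma_1 \gamma_2\}$ is $E_\tau$-linearly independent, which is equivalent to $\gamma_1$ and $\gamma_2$ not commuting (if they commuted, $E_\tau[\gamma_1, \gamma_2]$ would be a commutative subalgebra of dimension at most $2$). Non-commutativity holds because $\iota(y)$ and $\iota(z)$ are elliptic elements of $\mathrm{SL}_2(\R)$ with distinct fixed points in $\mathcal{H}$, and the same is true for their squares. That the $\mathcal{O}_{E_\tau}$-span of the four elements equals $\mathcal{Q}_\tau$ then follows by reducing arbitrary words in $\iota(\Delta^{(2)}(\tau))$ via the Cayley--Hamilton relations $\gamma_i^2 = \mathrm{trd}(\gamma_i)\gamma_i - 1$ and the standard identity $\gamma_2 \gamma_1 = \mathrm{trd}(\gamma_1)\gamma_2 + \mathrm{trd}(\gamma_2)\gamma_1 - \mathrm{trd}(\gamma_1)\mathrm{trd}(\gamma_2) + \mathrm{trd}(\gamma_1 \gamma_2) - \gamma_1 \gamma_2$, whose coefficients all lie in $\mathcal{O}_{E_\tau}$.

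To obtain the explicit presentation I would set $i' := 2(\gamma_1 - \cos \frac{2\pi}{b}) = 4 \cos \frac{\pi}{b}\,(\beta - \cos \frac{\pi}{b})$, the second equality coming from $\beta^2 = 2\cos \frac{\pi}{b}\cdot \beta - 1$. In the $B_\tau$-basis of Proposition~\ref{pro:alpha.beta.fulldelta}, the element $\beta - \cos \frac{\pi}{b}$ is a combination of the generators $i$ and $ij$ only, so $(i')^2$ is a scalar: the would-be cross term $i \cdot ij + ij \cdot i$ vanishes because $ij \cdot i = i(ji) = -i \cdot ij$. Substituting $i^2 = 4\cos^2 \frac{\pi}{a} - 4$, $j^2 = \delta$, and the coefficients of $\beta$ reduces the computation to the algebraic identity
\begin{equation*}
4 \left( \cos \frac{\pi}{a} \cos \frac{\pi}{b} + \cos \frac{\pi}{c} \right)^2 - \delta = 4 \left( \cos^2 \frac{\pi}{a} - 1 \right) \left( \cos^2 \frac{\pi}{b} - 1 \right),
\end{equation*}
which follows at once by expanding $\delta$. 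This yields $(i')^2 = 16 \cos^2 \frac{\pi}{b} (\cos^2 \frac{\pi}{b} - 1) \in E_\tau$, matching the first slot of the claimed presentation.

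For $j'$ I would apply Cayley--Hamilton to $\alpha\beta$ (of reduced trace $-2\cos \frac{\pi}{c}$) to obtain $\gamma_2 - \cos \frac{2\pi}{c} = 2\cos \frac{\pi}{c}\,(\alpha\beta)_0$, where $(\alpha\beta)_0 = \alpha\beta + \cos \frac{\pi}{c}$ is the trace-zero part. Expanding this via Proposition~\ref{pro:alpha.beta.fulldelta} gives components in $i$, $j$, and $ij$; subtracting the correct $E_\tau$-multiple of $i'$ isolates the piece of $\gamma_2 - \cos \frac{2\pi}{c}$ that anticommutes with $i'$ (namely, the $j$ and $ij$ contributions), and I would define $j'$ so that this remainder equals $\frac{i' j'}{4(1 - \cos^2 \frac{2\pi}{b})}$. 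A direct expansion using $j^2 = \delta$, together with the built-in anticommutation $i' j' + j' i' = 0$, then produces $(j')^2 = 16 \delta \cos^2 \frac{\pi}{b} \cos^2 \frac{\pi}{c}$. The main obstacle is the final step: verifying that the coefficient of $i'$ in $\gamma_2$ takes exactly the claimed form requires repeated use of the doubling identity $\cos \frac{2\pi}{n} = 2\cos^2 \frac{\pi}{n} - 1$ and of the fact that only the symmetric combinations of $\cos \frac{\pi}{a}, \cos \frac{\pi}{b}, \cos \frac{\pi}{c}$ listed in~\eqref{equ:def.etau} may appear, a constraint that is automatically satisfied because both sides lie in $E_\tau$ by construction.
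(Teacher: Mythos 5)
Your route is genuinely different from the paper's: the paper disposes of essentially the whole statement by citing \cite[Proposition~2]{Takeuchi/77JFS} and then inverting Takeuchi's explicit change of basis to get the coordinates of $\gamma_1,\gamma_2$, whereas you rebuild the presentation from scratch inside $B_\tau$ using Proposition~\ref{pro:alpha.beta.fulldelta}. The computational half of your argument is sound in outline: $\gamma_1 = \beta^2$ and $\gamma_2 = (\alpha\beta)^{-2}$ are correct, the identity $4\bigl(\cos\frac{\pi}{a}\cos\frac{\pi}{b}+\cos\frac{\pi}{c}\bigr)^2 - \delta = 4\bigl(\cos^2\frac{\pi}{a}-1\bigr)\bigl(\cos^2\frac{\pi}{b}-1\bigr)$ checks out, and it does give $(i')^2 = 16\cos^2\frac{\pi}{b}\bigl(\cos^2\frac{\pi}{b}-1\bigr)$ for $i' = 4\cos\frac{\pi}{b}\bigl(\beta - \cos\frac{\pi}{b}\bigr)$. (One small imprecision: the part of $\gamma_2 - \cos\frac{2\pi}{c}$ that anticommutes with $i'$ is not literally ``the $j$ and $ij$ contributions,'' since $i'$ itself has both $i$ and $ij$ components; what you actually need is the component orthogonal to $i'$ with respect to the norm form, which mixes $i$ and $ij$. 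Your construction of $j'$ is the right one, but the decisive verifications of the $i'$-coefficient of $\gamma_2$ and of $(j')^2$ are deferred rather than carried out.)

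The genuine gap is in the basis-of-the-order claim. You propose to reduce ``arbitrary words in $\iota(\Delta^{(2)}(\tau))$'' using only the Cayley--Hamilton and rewriting relations for $\gamma_1$ and $\gamma_2$; this implicitly assumes that $\Delta^{(2)}(\tau)$ is generated by $-1$, $y^2$, and $z^2$, which is false in general. For example, when $a,b,c$ are all even, as for $\tau = (2,4,6)$, the group $\overline{\Delta}^{(2)}$ has index four and signature $(0;2,2,3,3)$; by the classification of two-generator cocompact Fuchsian groups it has rank three, so the subgroup $\langle y^2, z^2\rangle$, generated by one elliptic of order $2$ and one of order $3$, is proper. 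In general $\Delta^{(2)}(\tau)$ is generated by $-1$ together with the $\Delta(\tau)$-conjugates of $x^2$, $y^2$, $z^2$, so to prove $\mathcal{Q}_\tau = \mathcal{O}_{E_\tau}\langle 1,\gamma_1,\gamma_2,\gamma_1\gamma_2\rangle$ you must additionally show that this lattice contains $\iota(x^2) = \iota\bigl((yz)^{-2}\bigr)$ and is stable under conjugation by $\iota(x),\iota(y),\iota(z)$ (or argue equivalently). That containment is exactly the nontrivial content of Takeuchi's proposition, which the paper's proof invokes and your word-reduction argument does not supply.
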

\begin{proof}
All this is stated explicitly in~\cite[Proposition~2]{Takeuchi/77JFS} (where $\gamma_1$ and $\gamma_2$ are denoted $\gamma_2^2$ and $\gamma_3^2$, respectively) except for the expressions for $\gamma_1$ and $\gamma_2$ as linear combinations of $1, i, j, ij$.  These are obtained by inverting the explicit transformation given in the proof of~\cite[Proposition~2]{Takeuchi/77JFS} and observing the following identities, where $c_1$, $c_2$, $c_3$ are as in~\cite{Takeuchi/77JFS}:
\begin{alignat*}{3}
c_1 & = & &\frac{1}{2} \mathrm{tr} \, \iota(y^2) & =  &\cos \frac{2 \pi}{b} \\
c_2 & = & &\frac{1}{2} \mathrm{tr} \, \iota(z^2) & =  &\cos \frac{2 \pi}{c} \\
c_3 & = & &\frac{1}{2} \mathrm{tr} \, \iota(y^2 z^2) & =  &1 - 2 \cos^2 \frac{\pi}{b} - 4 \cos \frac{\pi}{a} \cos \frac{\pi}{b} \cos \frac{\pi}{c} - 2 \cos^2 \frac{\pi}{c}. \qedhere
\end{alignat*} 
\end{proof}

It is immediate that $\mathrm{tr}(\mathcal{Q}_\tau) \subseteq \mathcal{O}_{E_\tau}$.  Therefore, $\overline{\Delta}(\tau)$ is \emph{semi-arithmetic}~\cite[Definition~3]{SchallerWolfart/00}.
Takeuchi shows~\cite[Theorem~1]{Takeuchi/77} that $\overline{\Delta}(\tau)$ is arithmetic if and only if the finite-index subgroup $\overline{\Delta}^{(2)}(\tau)$ is isomorphic to a finite-index subgroup of $G(A_\tau, \mathcal{Q}_\tau)$, if and only if $A_\tau$ ramifies at all infinite places of $E_\tau$ other than the distinguished one.  

\subsection{Congruence subgroups of triangle groups} \label{sec:congruence.subgroups}
As before, let $\tau = (a,b,c)$ be a hyperbolic triple with $c < \infty$.  Let $J \vartriangleleft \mathcal{O}_{E_\tau}$ be an ideal, and let $\mathfrak{J} \vartriangleleft \mathcal{O}_{F_\tau}$ be an ideal such that $\mathfrak{J} \cap \mathcal{O}_{E_\tau} = J$.  Suppose that $J$ is coprime to $abc$.  Moreover suppose that $\mathfrak{J}$ is coprime to the discriminant of $\mathcal{O}_\tau$; by Proposition~\ref{pro:discriminant}, this is only an additional assumption if $(a,b,c) = (mk, m(k+1), mk(k+1)$ for some $n,k \in \N$.  In this setup, Clark and Voight define a finite-index congruence subgroup $\overline{\Delta}(\tau;J) \trianglelefteq \overline{\Delta}(\tau)$ as the kernel of an explicit homomorphism $\Psi_J : \overline{\Delta}(\tau) \to \mathrm{PSL}_2(\mathcal{O}_{F_\tau} / \mathfrak{J})$ constructed in~\cite[\S5]{CV/19}.  If $\overline{\Delta}(\tau) \simeq G(B_\tau, \mathcal{O}_\tau)$, then $\Psi_J$ is surjective.  The significance of these groups is that if $J = \p$ is a prime of $E_\tau$ dividing a rational prime $p$, then the Riemann surface $X_{\overline{\Delta}(\tau;\p)}$ admits a Belyi map $X_{\overline{\Delta}(\tau;\p)} \to \mathbb{P}^1(\C)$, namely a non-constant morphism ramified at exactly three points, which is also a Galois cover.

If $k$ is a finite field and $k^\prime / k$ is the extension of degree two, then the homomorphism $\mathrm{GL}_2(k) \to \mathrm{PSL}_2(k^\prime)$ given by $g \mapsto \pm g / \sqrt{\det g}$ induces an embedding $\mathrm{PGL}_2(k) \hookrightarrow \mathrm{PSL}_2(k^\prime)$; note that every element of $k$ is a square in $k^\prime$.
The finite quotient $\overline{\Delta}(\tau) / \overline{\Delta}(\tau;\p)$ is described explicitly in~\cite[Theorem~9.1]{CV/19}:

\begin{pro}[Clark-Voight] \label{pro:CV.congruence.quotient}
Suppose that $\tau = (a,b,c)$ and the prime $p$ are as above.  Let $\mathfrak{p}$ be a prime  of $E_\tau$ dividing $p$, and let $k_\p = \mathcal{O}_{E_\tau} / \p$ be its residue field.  Then
$$ \overline{\Delta}(\tau) / \overline{\Delta}(\tau; \p) \simeq
\begin{cases}
\mathrm{PSL}_2 (k_\p) &: \p \text{ splits completely in the extension } F_\tau / E_\tau \\
\mathrm{PGL}_2 (k_\p) &: \text{otherwise.}
\end{cases}
$$
\end{pro}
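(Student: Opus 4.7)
The plan is to study $\Psi_\p$ by reducing the quaternion-algebra embedding $\varepsilon_0$ of Remark~\ref{rmk:embedding} modulo an appropriate prime, and to use the subgroup $\overline{\Delta}^{(2)}(\tau)$ together with its embedding into the smaller order $\mathcal{Q}_\tau$ over $E_\tau$ from Proposition~\ref{pro:presentation.a} to control the image.

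First I would fix a prime $\mathfrak{P} \vartriangleleft \mathcal{O}_{F_\tau}$ above $\p$. By Proposition~\ref{pro:discriminant} and the assumption $(\p,abc)=1$, both $\mathfrak{P}$ and $\p$ are coprime to the discriminants of $\mathcal{O}_\tau$ and $\mathcal{Q}_\tau$ respectively, so $\mathcal{O}_\tau \otimes_{\mathcal{O}_{F_\tau}} k_\mathfrak{P} \simeq M_2(k_\mathfrak{P})$ and $\mathcal{Q}_\tau \otimes_{\mathcal{O}_{E_\tau}} k_\p \simeq M_2(k_\p)$. Reducing $\varepsilon_0$ modulo $\mathfrak{P}$ yields a homomorphism $\overline{\Psi}\colon\overline{\Delta}(\tau) \to \mathrm{PSL}_2(k_\mathfrak{P})$ whose kernel, by the construction of $\Psi_\p$, is $\overline{\Delta}(\tau;\p)$. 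Since $\overline{\Delta}^{(2)}(\tau)$ maps under $\varepsilon_0$ into $(\mathcal{Q}_\tau)^1$, the image $\overline{\Psi}(\overline{\Delta}^{(2)}(\tau))$ lies in the copy of $\mathrm{PSL}_2(k_\p)$ sitting inside $\mathrm{PSL}_2(k_\mathfrak{P})$.

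Next I would show that this restriction is surjective onto $\mathrm{PSL}_2(k_\p)$ by invoking Macbeath's classification~\cite{Macbeath/69} of subgroups of $\mathrm{PSL}_2$ of a finite field. The reductions of the generators $\iota(y^2)$ and $\iota(z^2)$ retain their full orders $b/(2,b)$ and $c/(2,c)$ in $\mathrm{PSL}_2(k_\p)$ because $\p$ is coprime to $abc$. This already excludes cyclic, dihedral, and exceptional ($A_4,S_4,A_5$) images once $|k_\p|$ is large enough, and rules out containment in a Borel (since the generators cannot share a common fixed point on $\mathbb{P}^1$ while preserving such orders). A proper subfield image $\mathrm{PSL}_2(\kappa) \subset \mathrm{PSL}_2(k_\p)$ is excluded because the trace field of $\varepsilon_0(\overline{\Delta}^{(2)}(\tau))$ is $E_\tau$, whose reduction at $\p$ is $k_\p$ itself, so traces of images of generators span $k_\p$ over $\mathbb{F}_p$.

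Finally, to identify $\Psi_\p(\overline{\Delta}(\tau))$ I would analyze the contribution of $g \in \overline{\Delta}(\tau) \setminus \overline{\Delta}^{(2)}(\tau)$. Such an element has $\tr\varepsilon_0(g) \in F_\tau$ with $(\tr\varepsilon_0(g))^2 \in E_\tau$. If $\p$ splits completely in $F_\tau/E_\tau$, then $k_\mathfrak{P} = k_\p$, every such trace reduces into $k_\p$, and the image is simply $\mathrm{PSL}_2(k_\p)$. Otherwise $k_\mathfrak{P}/k_\p$ is a proper quadratic extension, $\overline{\Psi}(g)$ normalizes but does not lie in $\mathrm{PSL}_2(k_\p)$, and the embedding $\mathrm{PGL}_2(k_\p) \hookrightarrow \mathrm{PSL}_2(k_\mathfrak{P})$ recalled before the statement identifies $\Psi_\p(\overline{\Delta}(\tau))$ with $\mathrm{PGL}_2(k_\p)$, which contains $\mathrm{PSL}_2(k_\p)$ with index two. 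The principal obstacle is the Macbeath step: excluding proper subfield images and the exceptional possibilities for small $|k_\p|$ requires delicate case analysis, and it is here that the hypothesis $(\p,abc)=1$ plays its essential role.
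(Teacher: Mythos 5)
First, a point of comparison: the paper does not prove this statement at all --- it is quoted verbatim from \cite[Theorem~9.1]{CV/19} --- so the only benchmark is that reference, whose broad strategy your outline does mirror: reduce the quaternionic embedding modulo $\mathfrak{P}$, show that $\overline{\Delta}^{(2)}(\tau)$ already surjects onto $\mathrm{PSL}_2(k_\p)$ via Macbeath's classification, and then account for the quotient $\overline{\Delta}(\tau)/\overline{\Delta}^{(2)}(\tau)$ of order at most $4$ by a trace computation. Your final paragraph (normalizer of $\mathrm{PSL}_2(k_\p)$ in $\mathrm{PSL}_2(k_\mathfrak{P})$ is $\mathrm{PGL}_2(k_\p)$, and an element with trace in $k_\mathfrak{P}\setminus k_\p$ witnesses the index two) is sound.

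The genuine gap is the surjectivity step, which you defer, and the one justification you do offer there is incorrect. Order constraints cannot exclude a reducible (Borel) image: a split torus of $\mathrm{PSL}_2(k_\p)$ contains elements of every order dividing $(|k_\p|-1)/\gcd(2,|k_\p|-1)$, so two generators of large prime-to-$p$ order can perfectly well share a fixed point on $\mathbb{P}^1$. The correct tool is Macbeath's trace criterion, i.e.\ Lemma~\ref{lem:commutative}: for the lift $(t_1,t_2,t_3)=\bigl(2\cos\frac{\pi}{a},\,2\cos\frac{\pi}{b},\,-2\cos\frac{\pi}{c}\bigr)$ coming from $\alpha$, $\beta$, $(\alpha\beta)^{-1}$ one computes $t_1^2+t_2^2+t_3^2-t_1t_2t_3-4=\delta$, the generator of the discriminant in Proposition~\ref{pro:discriminant}; hence it is the hypothesis that $\mathfrak{J}$ is prime to the discriminant of $\mathcal{O}_\tau$ --- not coprimality to $abc$ --- that rules out the commutative case. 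Likewise, the exceptional possibilities are not an artifact of small $|k_\p|$: for hyperbolic triples such as $(2,5,5)$, $(3,3,5)$, $(5,5,5)$ the order triple lies on Macbeath's exceptional list for \emph{every} admissible $\p$, and $A_5$ genuinely is a quotient of these triangle groups, so exceptional images must be excluded by the finer analysis of \cite[\S\S7--9]{CV/19} (partly projective triples, their Proposition~8.10) rather than by letting $|k_\p|$ grow. A smaller unproved assertion: that the reduced traces of $\varepsilon_0(\overline{\Delta}^{(2)}(\tau))$ generate all of $k_\p$ over $\F_p$ requires the subring of $\mathcal{O}_{E_\tau}$ generated by traces to surject onto the residue field, which does not follow merely from the trace \emph{field} being $E_\tau$. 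As it stands the argument is complete only for triples avoiding the exceptional order list, which is precisely where the cited reference does its real work.
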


It is easy to see that $F_\tau$ is contained in the cyclotomic field $\Q(\zeta_{2abc})$, where $\zeta_{2abc}$ is a primitive $2abc$-th root of unity.  Hence $F_\tau / \Q$ is an abelian Galois extension.  Therefore $E_\tau / \Q$ is also Galois, and the quotient $\overline{\Delta}(\tau) / \overline{\Delta}(\tau;\p)$ is independent, up to isomorphism, of the choice of prime ideal $\p \vartriangleleft \mathcal{O}_{E_\tau}$ dividing a rational prime $p$.  Set $K(\tau;p) = \overline{\Delta}(\tau) / \overline{\Delta}(\tau;\p)$.

The area of a fundamental domain of $\overline{\Delta}(\tau;J)$ is $2\pi \left( 1 - \frac{1}{a} - \frac{1}{b} - \frac{1}{c} \right) [\overline{\Delta}(\tau) : \overline{\Delta}(\tau;J)]$.  By the Gauss-Bonnet theorem, the genus of the Riemann surface $X_{\overline{\Delta}(\tau;J)}$ is thus
\begin{equation} \label{equ:genus}
g(X_{\overline{\Delta}(\tau;J)}) = \frac{[\overline{\Delta}(\tau) : \overline{\Delta}(\tau;J)]}{2} \left( 1 - \frac{1}{a} - \frac{1}{b} - \frac{1}{c} \right) + 1.
\end{equation}

\begin{rem} \label{rmk:pgl.psl}
We will view $K(\tau;p) = \Psi_\p(\overline{\Delta}(\tau))$ as a subgroup of $\mathrm{PSL}_2(\mathcal{O}_{F_\tau} / \mathfrak{P})$, where $\mathfrak{P}$ is a prime of $F_\tau$ dividing $\p$.  From now on we will omit $\tau$ from the notation and write $\overline{\Delta}$ and $\overline{\Delta}(\p)$ when this should not cause confusion.  
\end{rem}

\subsection{Normal subgroups of $\overline{\Delta}(\tau)$} \label{sec:normal.subgroups}
Let $\tau = (a,b,c)$ as before.  We will be interested below in classifying the normal subgroups $H \trianglelefteq \overline{\Delta}(\tau)$ satisfying 
\begin{equation} \label{equ:right.quotient}
\overline{\Delta}(\tau)/H \simeq K(\tau;p).
\end{equation}
Let $T(\tau; p)$ be the set of triples $(\gamma_1, \gamma_2, \gamma_3) \in K(\tau; p)^3$ such that $\gamma_1^a = \gamma_2^b = \gamma_3^c = \gamma_1 \gamma_2 \gamma_3 = e$ and such that $\langle \gamma_1, \gamma_2, \gamma_3 \rangle = K(\tau;p)$.  The automorphism group $\mathrm{Aut}(K(\tau;p))$ acts on $T(\tau;p)$ in the obvious way.  Each element $(\gamma_1,\gamma_2,\gamma_3) \in T(\tau;p)$ gives rise to a normal subgroup $H \trianglelefteq \overline{\Delta}(\tau)$ satisfying~\eqref{equ:right.quotient}, namely the kernel of the epimorphism $\psi : \overline{\Delta}(\tau) \twoheadrightarrow K(\tau;p)$ determined by $\psi(x) = \gamma_1, \psi(y) = \gamma_2$, where $x,y \in \overline{\Delta}(\tau)$ are generators as in~\eqref{equ:triangle.group}.  Clearly two elements in the same orbit of the $\mathrm{Aut}(K(\tau;p))$-action produce the same normal subgroup, and every normal subgroup satisfying~\eqref{equ:right.quotient} arises in this way.  Thus the number of normal subgroups satisfying~\eqref{equ:right.quotient} is bounded by $| T(\tau;p) / \mathrm{Aut}(K(\tau;p))|$.
This quantity can be studied by means of the theory of Macbeath~\cite{Macbeath/69}, as extended by Clark and Voight~\cite{CV/19}.  We present here just enough to state the results we will use and refer the reader to~\cite{Macbeath/69} and~\cite[\S6-8]{CV/19} for details.

For any field $k$, let $\pi : \mathrm{SL}_2(k) \to \mathrm{PSL}_2(k)$ denote the natural projection.  For any $n \in \mathbb{N}$ and any prime power $q$, define $\kappa(n,q)$ to be the number of conjugacy classes of $\mathrm{PSL}_2(\F_q)$ whose elements have order $n$.  

\begin{lem} \label{lem:eigenvalues}
Let $\F_q$ be a finite field of odd characteristic $p$, and let $n \in \mathbb{N}$ be coprime to $p$.  Then $\kappa(n,q) = 0$ if $q^2 \not\equiv 1 \, \mathrm{mod} \, 2n$.  Otherwise,
$$ \kappa(n,q) = \begin{cases}
1 &: n \in \{ 1, 2 \} \\
\frac{\varphi(n)}{2} &: n \geq 3 \, \mathrm{odd} \\
\frac{\varphi(2n)}{4} &: n \geq 4 \, \mathrm{even},
\end{cases}
$$
where $\varphi$ is Euler's totient function.  The common trace of the elements in each conjugacy class of $\mathrm{PSL}_2(\F_q)$ with elements of order $n$ has the form $\pm (\zeta + \zeta^{-1})$, where $\zeta$ is a primitive $n$-th root of unity if $n$ is odd, and a primitive $2n$-th root of unity if $n$ is even.
\end{lem}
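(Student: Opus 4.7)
The plan is to identify conjugacy classes in $\mathrm{PSL}_2(\F_q)$ of elements of order $n$ with orbits of eigenvalues $\lambda \in \F_{q^2}^\times$ under the Klein four-group action generated by $\lambda \mapsto \lambda^{-1}$ and $\lambda \mapsto -\lambda$, and then enumerate these orbits.

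First I would lift $\pi(g) \in \mathrm{PSL}_2(\F_q)$ to $g \in \mathrm{SL}_2(\F_q)$. Since $(n,p)=1$, such a $g$ is semisimple with eigenvalues $\lambda, \lambda^{-1} \in \F_{q^2}^\times$; conjugacy classes in $\mathrm{SL}_2(\F_q)$ of non-central semisimple elements with distinct eigenvalues are detected by the characteristic polynomial, hence by the trace, so the $\mathrm{PSL}_2(\F_q)$-class of $\pi(g)$ is detected by the unordered pair $\pm(\lambda + \lambda^{-1})$, equivalently by the Klein four-group orbit $\{\lambda, \lambda^{-1}, -\lambda, -\lambda^{-1}\}$. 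Next I would relate the multiplicative order $m$ of $\lambda$ to the order of $\pi(g)$: if $m$ is odd, they coincide, since $-1 \notin \langle \lambda \rangle$; if $m$ is even, then $\lambda^{m/2} = -1$ is the unique involution of $\langle \lambda \rangle$, so $g^{m/2} = -I$ and $\pi(g)$ has order $m/2$. This forces $\lambda$ to have order $n$ or $2n$ when $n$ is odd, and order exactly $2n$ when $n$ is even. Eligible $\lambda$ exist in the cyclic group $\F_{q^2}^\times$ of order $q^2 - 1$ if and only if $2n \mid q^2 - 1$, yielding the vanishing clause.

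Assuming $q^2 \equiv 1 \pmod{2n}$, I would then count orbits. For $n \geq 3$ the Klein four-group acts freely on the eligible set of roots of unity, since $\lambda = \pm \lambda^{\pm 1}$ forces $\lambda^2 \in \{\pm 1\}$, ruled out by the order constraint. For $n \geq 3$ odd, the eligible set has size $2\varphi(n)$ (using $\varphi(2n) = \varphi(n)$), and since negation swaps primitive $n$-th roots with primitive $2n$-th roots, each orbit of size four contains two roots of each type, giving $2\varphi(n)/4 = \varphi(n)/2$ orbits. For $n \geq 4$ even, only primitive $2n$-th roots contribute and negation preserves this set, yielding $\varphi(2n)/4$ orbits. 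The edge cases $n \in \{1,2\}$ I would handle by direct inspection: the identity class, respectively the unique class of an element with $g^2 = -I$ and trace $0$ (arising from a primitive fourth root of unity, which exists since $q$ odd forces $8 \mid q^2 - 1$). Finally, the trace formula reads off by choosing a representative of each orbit: a primitive $n$-th root $\zeta$ in the odd case, a primitive $2n$-th root $\zeta$ in the even case, giving $\tr \pi(g) = \pm(\zeta + \zeta^{-1})$.

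The main obstacle will be the careful bookkeeping in the odd-$n$ case, where lifts with $\lambda$ of multiplicative order $n$ and of order $2n$ both contribute but must not be double-counted; this is resolved by the observation that negation pairs them within a single Klein four-group orbit, so the naive count $2\varphi(n)$ of eligible roots collapses to $\varphi(n)/2$ classes rather than the doubled quantity one might expect.
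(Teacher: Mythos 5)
Your proposal is correct and follows essentially the same route as the paper's proof: lift to a semisimple element of $\mathrm{SL}_2(\F_q)$, identify the eigenvalues as primitive $n$-th or $2n$-th roots of unity according to the parity of $n$, and count traces up to sign using the fact that semisimple classes are determined by the trace. The paper leaves the orbit count as "a simple exercise," which your Klein four-group bookkeeping carries out explicitly.
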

\begin{proof}
Observe first that if $q^2 \not\equiv 1 \, \mathrm{mod} \, 2n$, then, under our hypotheses, $n \nmid | \mathrm{PSL}_2(\F_q)|$ and hence $\kappa(n,q) = 0$.  Now let $\gamma \in \mathrm{PSL}_2(\F_q)$ have order $n$, and let $M \in \pi^{-1}(\gamma) \subset \mathrm{SL}_2(\F_q)$.  Then $M$ is semisimple, and its eigenvalues are reciprocal primitive $2n$-th roots of unity if $n$ is even and reciprocal primitive $n$-th or $2n$-th roots of unity if $n$ is odd.  It is a simple exercise to count the possible traces, up to sign.  Since a semisimple element of $\mathrm{PSL}_2(\F_q)$ is determined up to conjugacy by its trace, this completes the proof.
\end{proof}

Consider a triple $\underline{t} = (t_1, t_2, t_3) \in \F_q^3$.  We will refer to such objects as trace triples.  Let $T(\underline{t})$ be the set of triples $(g_1, g_2, g_3) \in \mathrm{SL}_2(\F_q)$ such that $g_1 g_2 g_3 = e$ and $\tr g_i = t_i$ for all $i \in \{1,2,3 \}$; note that we do not make any hypothesis here about the orders of the $g_i$ or the subgroup of $\mathrm{SL}_2(\F_q)$ generated by them.  A fundamental result of Macbeath~\cite[Theorem~1]{Macbeath/69} is that $T(\underline{t}) \neq \varnothing$ for all trace triples $\underline{t}$.  Moreover, Macbeath classified trace triples as follows.  We say that $\underline{t}$ is commutative if there exists $(g_1, g_2, g_3) \in T(\underline{t})$ such that $\pi(\langle g_1, g_2, g_3 \rangle)$ is an abelian subgroup of $\mathrm{PSL}_2(\F_q)$.  Commutative trace triples are easily identified in practice thanks to the following consequence of~\cite[Corollary~1]{Macbeath/69}.

\begin{lem}[Macbeath] \label{lem:commutative}
Let $\underline{t} = (t_1, t_2, t_3) \in \F_q^3$ be a trace triple.  Then $\underline{t}$ is commutative if and only if $t_1^2 + t_2^2 + t_3^2 - t_1 t_2 t_3 - 4 = 0$.
\end{lem}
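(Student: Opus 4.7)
The plan is to reduce the lemma to a classical Fricke trace identity and then to interpret the resulting equation geometrically on the $\SL_2$-character variety.

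First I would establish the identity
\[
\tr([g_1, g_2]) = t_1^2 + t_2^2 + t_3^2 - t_1 t_2 t_3 - 2
\]
for any $(g_1, g_2, g_3) \in T(\underline{t})$. Since $g_1 g_2 g_3 = e$ forces $g_3 = (g_1 g_2)^{-1}$, we have $t_3 = \tr(g_1 g_2)$. Iterated use of the Cayley--Hamilton relation $g + g^{-1} = \tr(g) \cdot I$, valid for every $g \in \SL_2(\F_q)$ (equivalently $\tr(AB) + \tr(AB^{-1}) = \tr(A)\tr(B)$ for $A, B \in \SL_2$), lets one rewrite $\tr(g_1 g_2 g_1^{-1} g_2^{-1})$ as a polynomial in $t_1, t_2, t_3$; a short bookkeeping argument yields the displayed identity. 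The crucial consequence is that
\[
t_1^2 + t_2^2 + t_3^2 - t_1 t_2 t_3 - 4 \;=\; \tr([g_1, g_2]) - 2,
\]
so this quantity depends only on the trace triple, not on the specific lift chosen in $T(\underline{t})$.

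Next I would translate the equation $\tr([g_1, g_2]) = 2$ into the existence of a common eigenvector for $g_1, g_2$. If this trace equals $2$, then $[g_1, g_2]$ is either $I$ or a unipotent element, and in either case it fixes some vector in $\overline{\F_q}^2$. A standard character-variety argument --- the polynomial $E(\underline{t}) := t_1^2 + t_2^2 + t_3^2 - t_1 t_2 t_3 - 4$ cuts out the reducible locus in the $\SL_2$-character variety of the free group on two generators --- then forces $g_1$ and $g_2$ to share a common eigenvector over $\overline{\F_q}$. Conjugating into upper-triangular form places $\langle g_1, g_2, g_3 \rangle$ inside a Borel subgroup, and one checks that its image in $\mathrm{PSL}_2(\F_q)$ is abelian, so $\underline{t}$ is commutative.

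For the converse, if $\pi(\langle g_1, g_2, g_3\rangle)$ is abelian then $[g_1, g_2] \in \{\pm I\}$, giving $\tr([g_1, g_2]) \in \{\pm 2\}$. The main obstacle is excluding the value $-2$: one must show that, for a commutative trace triple, there always exists a lift in $T(\underline{t})$ whose commutator equals $+I$. This can be handled either by exploiting the freedom to replace $(g_1, g_2)$ by another pair realizing the same trace triple, or by invoking Macbeath's original formulation~\cite{Macbeath/69} of commutative triples as those whose generators admit a common fixed vector in $\F_q^2$, which forces simultaneous upper-triangularization and hence a unipotent commutator of trace $+2$. This sign subtlety is the most delicate point of the proof.
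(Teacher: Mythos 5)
The paper offers no proof of this lemma at all---it is quoted as a consequence of~\cite[Corollary~1]{Macbeath/69}---so your argument has to stand on its own, and as written it has two genuine gaps. The Fricke identity $\tr([g_1,g_2]) = t_1^2+t_2^2+t_3^2-t_1t_2t_3-2$ is correct and is the right starting point, but in the forward direction the step ``conjugating into upper-triangular form places $\langle g_1,g_2,g_3\rangle$ inside a Borel subgroup, and one checks that its image in $\mathrm{PSL}_2(\F_q)$ is abelian'' is false: the Borel subgroup of $\mathrm{PSL}_2(\F_q)$ is nonabelian for $q>3$, and the pair $g_1 = \diag(a,a^{-1})$ with $a^2\neq 1$, $g_2 = \left(\begin{smallmatrix} b & 1 \\ 0 & b^{-1}\end{smallmatrix}\right)$ has a common eigenvector, hence satisfies $t_1^2+t_2^2+t_3^2-t_1t_2t_3-4=0$, yet $[g_1,g_2]$ is a nontrivial unipotent, so the image in $\mathrm{PSL}_2(\F_q)$ is \emph{not} abelian. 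Since commutativity of $\underline{t}$ only requires \emph{some} element of $T(\underline{t})$ to have abelian image, the repair is not to argue about the given lift but to construct a commuting one: read the equation as the quadratic $t_3^2 - t_1t_2t_3 + (t_1^2+t_2^2-4)=0$ in $t_3$, whose roots are exactly $\tr(g_1g_2)$ and $\tr(g_1g_2^{-1})$ for $g_1,g_2$ chosen in a common split or nonsplit torus with traces $t_1,t_2$ (with a separate trivial case when some $t_i=\pm 2$).

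The ``delicate point'' you flag in the converse is worse than delicate: with the definition of commutative used in Section~\ref{sec:normal.subgroups} (abelian image in $\mathrm{PSL}_2(\F_q)$), the value $\tr([g_1,g_2])=-2$ genuinely occurs and neither of your proposed remedies removes it. For $\underline{t}=(0,0,0)$ take $g_1=\left(\begin{smallmatrix}0&1\\-1&0\end{smallmatrix}\right)$ and $g_2=\left(\begin{smallmatrix}a&b\\b&-a\end{smallmatrix}\right)$ with $a^2+b^2=-1$; then all three traces vanish, $g_1g_2=-g_2g_1$, the projective image is a Klein four-group (abelian), yet $[g_1,g_2]=-I$ and $t_1^2+t_2^2+t_3^2-t_1t_2t_3-4=-4\neq 0$ in odd characteristic. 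Moreover every realization of $(0,0,0)$ with abelian projective image is of this shape, so ``replacing the pair by another lift'' cannot produce commutator $+I$. The statement that is actually provable---and is what Macbeath and Clark--Voight prove---takes ``commutative'' to mean that $\langle g_1,g_2\rangle$ is abelian as a subgroup of $\SL_2(\F_q)$ for some lift, i.e.\ $[g_1,g_2]=I$; with that reading the converse is immediate from the Fricke identity and the $-2$ case is excluded by definition. Your second suggested fix (invoking Macbeath's common-fixed-vector formulation) is in substance this change of definition, but it must be made explicit, since as the example shows the lemma is sensitive to which notion of ``commutative'' is in force.
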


Let $\underline{g} = (g_1, g_2, g_3) \in \mathrm{SL}_2(\F_q)^3$.  We associate to it the order triple $o(\underline{g}) \in \N^3$ consisting of the orders of the elements $\pi(g_1), \pi(g_2), \pi(g_3) \in \mathrm{PSL}_2(\F_q)$, arranged in non-descending order.  A trace triple $\underline{t}$ is called exceptional if there exists $\underline{g} \in T(\underline{t})$ such that
\begin{multline*}
o(\underline{g}) \in \{ (2,2,c) : c \geq 2 \} \cup \\ \{ (2,3,3),(2,3,4),(2,3,5),(2,5,5),(3,3,3),(3,3,5),(3,4,4),(3,5,5),(5,5,5) \}.
\end{multline*}
If $\underline{t}$ is not commutative, then $o(\underline{g})$ is constant on all $\underline{g} \in T(\underline{t})$.  We say that $\underline{t}$ is projective if, for all $\underline{g} \in T(\underline{t})$, the subgroup $\pi(\langle g_1, g_2, g_3 \rangle) \leq \mathrm{PSL}_2(\F_q)$ is isomorphic to $\mathrm{PSL}_2(k)$ or $\mathrm{PGL}_2(k)$ for some subfield $k \subseteq \F_q$.  Macbeath~\cite[Theorem~4]{Macbeath/69} proved that every trace triple $\underline{t}$ is commutative, exceptional, or projective.  

Recall that the trace of an element of $\mathrm{PSL}_2(\F_q)$ is only defined up to sign.  Reflecting this and following~\cite[\S8]{CV/19}, we define a trace triple up to sign to be a triple $\pm \underline{t} = (\pm t_1, \pm t_2, \pm t_3)$, where $t_1, t_2, t_3 \in \F_q$.  A triple $(t_1^\prime, t_2^\prime, t_3^\prime) \in \F_q^3$ is called a lift of $\pm \underline{t}$ if $t_i^\prime = \pm t_i$ for all $i \in \{ 1, 2, 3 \}$, where the signs in each component may be taken independently.  We say that a trace triple up to sign is commutative or exceptional if it has a lift with the corresponding property.  Following the terminology of~\cite[\S8]{CV/19}, we say that $\pm \underline{t}$ is partly projective if it has a lift which is projective.  Every trace triple up to signs is commutative, exceptional, or partly projective~\cite[Lemma~8.9]{CV/19}.

Let $\underline{C} = (C_1, C_2, C_3)$ be a triple of conjugacy classes of $\mathrm{PSL}_2(\F_q)$.  Again following~\cite{CV/19}, define $\Sigma(\underline{C})$ to be the set of triples $\underline{\gamma} = (\gamma_1, \gamma_2, \gamma_3) \in \mathrm{PSL}_2(\F_q)^3$ such that $\gamma_1 \gamma_2 \gamma_3 = e$, that $\gamma_i \in C_i$ for all $i \in \{ 1,2,3 \}$, and that $\langle \gamma_1, \gamma_2, \gamma_3 \rangle = K(\tau;\p)$. 
The natural action of $\mathrm{Aut}(K(\tau;p))$ on triples of elements need not preserve $\Sigma(\underline{C})$.  However, we may still define an equivalence relation on $\Sigma(\underline{C})$ by $\underline{\gamma} \sim \underline{\gamma}^\prime$ if there exists $\sigma \in \mathrm{Aut}(K(\tau; p))$ such that $\sigma(\gamma_i) = \gamma_i^\prime$ for all $i$.  Denote the set of equivalence classes by $\Sigma(\underline{C})/\mathrm{Aut}(K(\tau;p))$.  Let $o(\underline{C})$ be the triple of the common orders of the elements of $C_1$, $C_2$, and $C_3$, arranged in non-descending order, and let $\tr \underline{C}$ be the trace triple up to signs associated to $\underline{C}$.  Let $\F_p(\tr \underline{C})$ be the subextension of $\F_q / \F_p$ generated by the components of $\tr \underline{C}$.

We are now ready to state a bound for the number of normal subgroups $H \trianglelefteq \overline{\Delta}(\tau)$ such that $\overline{\Delta}(\tau) / H \simeq K(\tau;p)$; see~\cite[Theorem~1.6]{Sah/69} for an overlapping result.  Suppose that $K(\tau;p) \leq \mathrm{PSL}_2(\F_q)$ as in Remark~\ref{rmk:pgl.psl}.  Let $\mathcal{C}(\tau;p)$ be the set of triples $\underline{C} = (C_1,C_2,C_3)$ of conjugacy classes of $\mathrm{PSL}_2(\F_q)$ such that $\Sigma(\underline{C}) \cap T(\tau;p) \neq \varnothing$.

\begin{pro} \label{pro:counting.normal.subgroups}
Let $\tau = (a,b,c)$, and let $p > 2$ be a rational prime.  
Suppose that for every $\underline{C} \in \mathcal{C}(\tau;p)$ the associated trace triple up to signs $\tr \underline{C}$ is partly projective and not exceptional, and that it satisfies $\F_q = \F_p(\tr \underline{C})$.  Set $\Omega(\tau;p) = \{ o(\underline{C}) : \underline{C} \in \mathcal{C}(\tau;p) \}$ and $\mathcal{N}(\tau;p) =  \left\{ H \trianglelefteq \overline{\Delta}(\tau) : \overline{\Delta}(\tau) / H \simeq K(\tau;p) \right\}$.  Then
$$
 | \mathcal{N}(\tau;p) | \leq \sum_{(a^\prime, b^\prime, c^\prime) \in \Omega(\tau;p) \atop a^\prime = 2} \kappa(a^\prime,q) \, \kappa(b^\prime,q) \, \kappa(c^\prime,q) + \sum_{(a^\prime, b^\prime, c^\prime) \in \Omega(\tau;p) \atop a^\prime > 2} 2 \, \kappa(a^\prime,q) \, \kappa(b^\prime,q) \, \kappa(c^\prime,q).
$$
\end{pro}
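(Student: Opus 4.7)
The plan is to bound $|\mathcal{N}(\tau;p)|$ by counting $\mathrm{Aut}(K(\tau;p))$-orbits on the generating triples $T(\tau;p)$, partitioning these by conjugacy-class triples, and applying the Macbeath--Clark--Voight rigidity results.

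As observed in Section~\ref{sec:normal.subgroups}, $|\mathcal{N}(\tau;p)| \leq |T(\tau;p)/\mathrm{Aut}(K(\tau;p))|$, because each $H \in \mathcal{N}(\tau;p)$ is the kernel of an epimorphism $\psi : \overline{\Delta}(\tau) \twoheadrightarrow K(\tau;p)$ that by the presentation~\eqref{equ:triangle.group} is determined by the triple $(\psi(x),\psi(y),\psi((xy)^{-1})) \in T(\tau;p)$ up to the $\mathrm{Aut}(K(\tau;p))$-action. The conjugacy classes of the entries of a triple determine a unique $\underline{C} \in \mathcal{C}(\tau;p)$, so $T(\tau;p) = \bigsqcup_{\underline{C}} \Sigma(\underline{C})$; every $\mathrm{Aut}$-orbit meets at least one summand, whence
\[|\mathcal{N}(\tau;p)| \leq \sum_{\underline{C} \in \mathcal{C}(\tau;p)} |\Sigma(\underline{C})/\sim|.\]
Grouping by the order triple $o(\underline{C})$ and using Lemma~\ref{lem:eigenvalues} to count conjugacy classes of $\mathrm{PSL}_2(\F_q)$ of prescribed order, the number of $\underline{C}$ with $o(\underline{C}) = (a',b',c')$ is at most $\kappa(a',q)\kappa(b',q)\kappa(c',q)$.

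The heart of the argument is to show that $|\Sigma(\underline{C})/\sim| \leq 1$ when $a' = 2$ and $|\Sigma(\underline{C})/\sim| \leq 2$ when $a' > 2$. Since $\tr\underline{C}$ is partly projective, some sign lift $\underline{t} = (t_1,t_2,t_3)$ of $\tr\underline{C}$ is projective, and Macbeath's theorem~\cite[Theorem~4]{Macbeath/69} asserts that $T(\underline{t}) \subseteq \mathrm{SL}_2(\F_q)^3$ is a single orbit under simultaneous $\mathrm{PGL}_2(\F_q)$-conjugation. Projecting to $\mathrm{PSL}_2(\F_q)^3$ and passing to the $\mathrm{Aut}(K(\tau;p)) \simeq \mathrm{P}\Gamma\mathrm{L}_2(\F_q)$-orbit structure, the only ambiguity is which sign lift of $\tr\underline{C}$ a given element of $\Sigma(\underline{C})$ arises from: the constraint $\gamma_1\gamma_2\gamma_3 = e$ forces the product of the three sign flips to be $+1$, correlating the signs of $t_2$ and $t_3$ with that of $t_1$ and leaving the sign of $t_1$ as the only free parameter. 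This is forced uniquely when $t_1 = 0$ (i.e.\ $a' = 2$) and admits at most two choices otherwise. The hypothesis $\F_q = \F_p(\tr\underline{C})$ ensures that Galois does not fix $\underline{C}$ setwise (unless $\F_q = \F_p$, in which case Galois is trivial), so the Galois component of $\mathrm{Aut}(K(\tau;p))$ does not fuse additional orbits within a given $\Sigma(\underline{C})$; non-exceptionality excludes the small sporadic configurations where Macbeath's classification permits extra orbits. The careful translation from Macbeath's $\mathrm{SL}_2$-result to the projective setting with signed traces is carried out in~\cite[\S8]{CV/19}.

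Combining these observations gives the claimed bound. The principal obstacle is the rigidity estimate $|\Sigma(\underline{C})/\sim| \leq 2$; the delicate point is disentangling the contributions of the four valid sign lifts of $\tr\underline{C}$ (constrained by $\gamma_1\gamma_2\gamma_3 = e$), of the outer $\mathrm{PGL}_2/\mathrm{PSL}_2$-automorphisms, and of the Galois action, so as to verify that the partly projective, non-exceptional, and $\F_q = \F_p(\tr\underline{C})$ hypotheses genuinely rule out all further orbits beyond the ones coming from the sign of $t_1$.
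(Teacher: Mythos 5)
Your argument is correct and follows essentially the same route as the paper's proof: bound $|\mathcal{N}(\tau;p)|$ by $|T(\tau;p)/\Aut(K(\tau;p))| \leq \sum_{\underline{C}} |\Sigma(\underline{C})/\Aut(K(\tau;p))|$, count the triples $\underline{C}$ with a given order triple via Lemma~\ref{lem:eigenvalues}, and reduce the key rigidity bound ($1$ orbit when a component has order $2$, at most $2$ otherwise) to the Macbeath--Clark--Voight machinery. The only difference is that you sketch the sign-lift argument underlying that bound, whereas the paper simply invokes \cite[Proposition~8.10]{CV/19} (note only that the single-orbit rigidity for projective triples is not \cite[Theorem~4]{Macbeath/69}, which is the commutative/exceptional/projective trichotomy).
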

\begin{proof}
It is clear that
$$  | \mathcal{N}(\tau;p) | \leq |T(\tau;p)/\mathrm{Aut}(K(\tau;p))| \leq \sum_{\underline{C} \in \mathcal{C}(\tau;p)} |\Sigma(\underline{C})/\mathrm{Aut}(K(\tau;p))|,$$
where the first inequality follows from the discussion at the beginning of this section.  The second inequality need not be an equality since the action of $\mathrm{Aut}(K(\tau;p))$ on $T(\tau;p)$ need not preserve the sets $\Sigma(\underline{C}) \cap T(\tau;p)$.  Our hypotheses ensure that~\cite[Proposition~8.10]{CV/19} applies; from it we conclude that $| \Sigma(\underline{C})/\mathrm{Aut}(K(\tau;p))| = 1$ if $o(\underline{C})$ has a component equal to $2$ and $| \Sigma(\underline{C})/\mathrm{Aut}(K(\tau;p))| \leq 2$ otherwise.  This completes the proof.
\end{proof}

\begin{rem}
Since $a^\prime b^\prime c^\prime | abc$, it is clear that if $p \nmid abc$, then the integers $\kappa(a^\prime,q)$, $\kappa(b^\prime,q)$, and $\kappa(c^\prime,q)$ appearing on the right-hand side of the conclusion of Proposition~\ref{pro:counting.normal.subgroups} may be computed from the formula in Lemma~\ref{lem:eigenvalues}.  We also note that the prime $p = 2$ has been excluded from the discussion in this section only to streamline the exposition; analogous results may be obtained in this case by analogous arguments.  
\end{rem}

\section{Semi-arithmeticity} \label{sec:semiarithmetic}
As mentioned above, only finitely many of the hyperbolic triangle groups $\overline{\Delta}(\tau)$ are arithmetic.  However, $\overline{\Delta}(\tau)$ is semi-arithmetic for all hyperbolic triples $\tau$ in the sense of~\cite[Definition~3]{SchallerWolfart/00}.  In this section we apply a recent argument of Cosac and D\'{o}ria~\cite[Theorem~1.5]{CosacDoria/20} to obtain, in some cases, a lower bound on $\mathrm{sys}(X_{\overline{\Delta}(\tau;J)})$ in terms of the genus $g(X_{\overline{\Delta}(\tau;J)})$, which is given explicitly by~\eqref{equ:genus}, and an invariant of an explicit quaternion algebra.  We then discuss earlier work in the arithmetic case, where more precise bounds are sometimes available.

Recall the quaternion algebra $A_\tau$ of Proposition~\ref{pro:presentation.a}.  It is defined over the totally real field $E_\tau$ of~\eqref{equ:def.etau}, which is the invariant trace field of $\overline{\Delta}(\tau)$.  Let $r_\tau$ be the number of infinite places of $E_\tau$ at which $A_\tau$ splits.  Takeuchi~\cite[\S3]{Takeuchi/75} proved that $\overline{\Delta}(\tau)$ is arithmetic if and only if $r_\tau = 1$ and that this condition holds for exactly $85$ triples $\tau$.  Nugent and Voight~\cite{NV/17} later showed that there are finitely many hyperbolic triples $\tau$ satisfying $r_\tau = N$ for any $N \in \N$ and provided an algorithm for computing them.

Cohen and Wolfart~\cite[\S2]{CohenWolfart/90} observed that the triangle group $\overline{\Delta}(\tau)$ admits a modular embedding.  This means (cf.~\cite[Definition~4]{SchallerWolfart/00}) that there is an arithmetic lattice $\Lambda < \mathrm{PSL}_2(\R)^{r_\tau}$, defined over $E_\tau$, a holomorphic map $F : \mathcal{H} \to \mathcal{H}^{r_\tau}$, and embeddings $\Phi_1, \dots, \Phi_{r_\tau} : \overline{\Delta}(\tau) \hookrightarrow \mathrm{PSL}_2(\R)$, extending each of the embeddings $E_\tau \hookrightarrow \R$ at which $A_\tau$ splits, such that the map $\Phi = (\Phi_1, \dots, \Phi_{r_\tau}) : \overline{\Delta}(\tau) \to \mathrm{PSL}_2(\R)^{r_\tau}$ satisfies $\Phi(\overline{\Delta}(\tau)) \subseteq \Lambda$ and $F(\gamma(z)) = \Phi(\gamma)(F(z))$ for all $\gamma \in \overline{\Delta}(\tau)$ and $z \in \mathcal{H}$.  The lattice $\Lambda$ is commensurate with a subgroup of an arithmetic lattice arising from a quaternion algebra, but it need not in general be contained in such a lattice.
More explicitly, Clark and Voight~\cite[Proposition~5.13]{CV/19} construct an embedding $\overline{\Delta}(\tau) \hookrightarrow N_{A_\tau}(\mathcal{Q}_\tau)/E_\tau^\times$, where $\mathcal{Q}_\tau \subset A_\tau$ is the order defined in Proposition~\ref{pro:presentation.a} and $N_{A_\tau}(\mathcal{Q}_\tau)$ is its normalizer in $A_\tau$; see also~\cite[Remark~3(ii)]{SchallerWolfart/00}.

However, if at least two of the components of the triple $\tau$ are odd, then the fields $E_\tau = F_\tau$ coincide, as do the quaternion algebras $A_\tau = B_\tau$ and the orders $\mathcal{Q}_\tau = \mathcal{O}_\tau$; see~\eqref{equ:b.presentation} and Proposition~\ref{pro:alpha.beta.fulldelta} for the definitions.  In this case there is an embedding $\overline{\Delta}(\tau) \hookrightarrow \mathcal{Q}^1_\tau / \{ \pm 1 \}$ as in Remark~\ref{rmk:embedding}, and the congruence subgroups are $\overline{\Delta}(\tau;J) = \overline{\Delta}(\tau) \cap \mathcal{Q}_\tau^1 (J)$ for all ideals $J \vartriangleleft \mathcal{O}_{E_\tau}$ coprime to $2abc$, where $\mathcal{Q}_\tau^1 (J) = \{ x \in \mathcal{Q}_\tau^1 : x \equiv 1 \, \mathrm{mod} \, J \mathcal{Q}_\tau \}$.  Note that $\mathcal{Q}_\tau^1 (J)$ embeds in $\mathcal{Q}_\tau^1 / \{ \pm 1 \}$ by our assumptions on $J$.  Then an argument of Cosac and D\'{o}ria obtains the following systolic bound.

\begin{thm} \label{thm:semiarithmetic}
Let $\tau = (a,b,c)$ be a hyperbolic triple with $c < \infty$.  Suppose that at least two of $a,b,c$ are odd.  There exists a constant $c_\tau$ such that, for every ideal $J \vartriangleleft \mathcal{O}_{E_\tau}$ coprime to $2abc$, the following holds:
\begin{equation} \label{equ:semiarithmetic}
 \mathrm{sys}(X_{\overline{\Delta}(\tau;J)}) \geq \frac{4}{3r_\tau} \log g(X_{\overline{\Delta}(\tau;J)}) - c_\tau.
 \end{equation}
\end{thm}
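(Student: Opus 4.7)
The plan follows the Cosac--D\'{o}ria strategy: derive a lower bound on $|2 - \tr \tilde\gamma|$ for a lift $\tilde\gamma \in \mathcal{Q}_\tau^1$ of a non-trivial hyperbolic $\gamma \in \overline{\Delta}(\tau;J)$, by combining the congruence condition, the product formula on $E_\tau$, and the Schwarz--Pick non-expansion of the modular embedding.

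Since at least two of $a,b,c$ are odd, $E_\tau = F_\tau$ and $\mathcal{Q}_\tau = \mathcal{O}_\tau$, so by the discussion preceding the theorem $\overline{\Delta}(\tau;J)$ lifts uniquely into $\mathcal{Q}_\tau^1(J) = \{ x \in \mathcal{Q}_\tau^1 : x \equiv 1 \pmod{J \mathcal{Q}_\tau} \}$. Write $\tilde\gamma = 1 + x$ with $x \in J\mathcal{Q}_\tau$. Since the reduced norm is a quadratic form on the locally free $\mathcal{O}_{E_\tau}$-module $\mathcal{Q}_\tau$, one has $\mathrm{nrd}(x) \in J^2 \mathcal{O}_{E_\tau}$. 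Combining with the identity $\mathrm{nrd}(\tilde\gamma - 1) = 2 - \tr \tilde\gamma$ and noting that a hyperbolic matrix does not have $1$ as an eigenvalue, we obtain $0 \neq 2 - \tr \tilde\gamma \in J^2 \mathcal{O}_{E_\tau}$, whence
$$ \prod_{v: E_\tau \hookrightarrow \mathbb{R}} |\phi_v(2 - \tr \tilde\gamma)| = \left|\mathrm{Nm}_{E_\tau / \mathbb{Q}}(2 - \tr \tilde\gamma)\right| \geq \mathrm{Nm}(J)^2.$$

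The heart of the proof is to bound each archimedean factor on the left from above by an expression involving $\ell := \ell(\delta_\gamma) \geq \mathrm{sys}(X_{\overline{\Delta}(\tau;J)})$. At the distinguished place $v_0$, $\phi_{v_0}(\tilde\gamma)$ is hyperbolic with translation length $\ell$, so $|\phi_{v_0}(2 - \tr \tilde\gamma)| \leq 2 + 2 \cosh(\ell/2)$. At each of the remaining $r_\tau - 1$ split places of $A_\tau$, I would invoke the modular embedding of Cohen--Wolfart: each coordinate $F_i$ of the holomorphic $\Phi$-equivariant map $F : \mathcal{H} \to \mathcal{H}^{r_\tau}$ is a non-constant holomorphic self-map of $\mathcal{H}$, hence non-expanding by Schwarz--Pick, which forces the translation length of $\Phi_v(\gamma) = \phi_v(\tilde\gamma)$ to be at most $\ell$; thus $|\phi_v(2 - \tr \tilde\gamma)| \leq 2 + 2\cosh(\ell/2)$ regardless of whether $\phi_v(\tilde\gamma)$ is hyperbolic, parabolic, or elliptic. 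At the $[E_\tau : \mathbb{Q}] - r_\tau$ ramified infinite places, $A_\tau \otimes_{E_\tau, v} \mathbb{R}$ is Hamilton's quaternions and norm-one elements have trace in $[-2,2]$, contributing a uniformly bounded factor depending only on $\tau$. Assembling,
$$ \mathrm{Nm}(J)^2 \leq C_\tau \, e^{r_\tau \ell / 2}, \qquad \text{so} \qquad \ell \geq \frac{4}{r_\tau}\log \mathrm{Nm}(J) - O_\tau(1).$$
Proposition~\ref{pro:CV.congruence.quotient} and multiplicativity in $J$ yield $[\overline{\Delta}(\tau) : \overline{\Delta}(\tau;J)] \leq |\mathrm{PSL}_2(\mathcal{O}_{E_\tau}/J)| \ll_\tau \mathrm{Nm}(J)^3$, so by~\eqref{equ:genus}, $\log \mathrm{Nm}(J) \geq \frac{1}{3}\log g(X_{\overline{\Delta}(\tau;J)}) - O_\tau(1)$. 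Substituting and absorbing the constants into $c_\tau$ produces~\eqref{equ:semiarithmetic}.

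The main obstacle is precisely the estimate at the $r_\tau - 1$ non-distinguished split places: without the Schwarz--Pick control supplied by the modular embedding, the Galois conjugates of $\tilde\gamma$ could have enormous traces and the norm identity would yield no useful bound. The odd-parity hypothesis on $(a,b,c)$ is used only to guarantee $E_\tau = F_\tau$ and $\mathcal{Q}_\tau = \mathcal{O}_\tau$, so that $\overline{\Delta}(\tau;J)$ lifts cleanly into $\mathcal{Q}_\tau^1(J)$; everything else---the quadratic-form calculation, the product formula, the hyperbolic trigonometry at $v_0$, and the polynomial bound on $|\mathrm{PSL}_2(\mathbb{F}_q)|$---is routine.
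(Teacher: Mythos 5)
Your proposal is correct and follows essentially the same route as the paper: the paper's proof consists of exactly the index/genus computation you give at the end together with a citation to the proof of Cosac--D\'{o}ria's Theorem~1.5 for the core estimate $\mathrm{sys}(X_{\overline{\Delta}(\tau;J)}) \geq \frac{4}{r_\tau}\log N(J) - O_\tau(1)$, and the argument you reconstruct (the congruence condition forcing $2 - \tr\tilde{\gamma} \in J^2$, the product formula over the infinite places of $E_\tau$, Schwarz--Pick applied to the modular embedding at the non-distinguished split places, and bounded traces at the ramified places) is precisely that cited argument. The only difference is that you spell out what the paper leaves as a black box.
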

\begin{proof}
We follow the proof of~\cite[Theorem~1.5]{CosacDoria/20}.
Let $J = \prod_{i = 1}^r \p_i^{e_i}$, where $\p_1, \dots, \p_r$ are distinct prime ideals of $\mathcal{O}_{E_\tau}$ that are coprime to $2abc$.  We write $N(J)$ for the norm of $J$.  By the proof of~\cite[Proposition~9.7]{CV/19}, we have
\begin{equation} \label{equ:general.quotient} 
\overline{\Delta}(\tau) / \overline{\Delta}(\tau;J) \simeq \prod_{i = 1}^r \mathrm{PSL}_2(\mathcal{O}_{E_\tau} / \p_i^{e_i}).
\end{equation}  
Setting $q_i = N(\p_i) = | \mathcal{O}_{E_\tau} / \p_i |$, we obtain
\begin{equation} \label{equ:norm.bound}
 [\overline{\Delta}(\tau) : \overline{\Delta}(\tau;J)]  = \prod_{i = 1}^r |\mathrm{PSL}_2(\mathcal{O}_{E_\tau} / \p_i^{e_i})|  = \prod_{i = 1}^r \frac{(q_i^3 - q_i) q_i^{3(e_i - 1)}}{2}  <  \prod_{i = 1}^r q_i^{3e_i} = N(J)^3.
 \end{equation}
Arguing as in the proof of~\cite[Theorem~1.5]{CosacDoria/20} and observing that we may take $\mathcal{F}$ to be the family of all ideals $J \vartriangleleft \mathcal{O}_{E_\tau}$ coprime to $2abc$, we conclude that if $N({J}) \geq 2^{[E_\tau:\Q]}$ then 
$$ \mathrm{sys}(X_{\overline{\Delta}(\tau;J)}) \geq \frac{4}{{r_\tau}} \log \mathrm{vol}(X_{\mathcal{Q}_\tau^1({J})}) - c_\tau^\prime = \frac{4}{{r_\tau}} \log N({J}) - c_\tau^{\prime \prime},$$
for constants $c_\tau^\prime, c_\tau^{\prime \prime}$ independent of $J$.  Hence it follows from~\eqref{equ:genus} and~\eqref{equ:norm.bound} that 
$$ \mathrm{sys}(X_{\overline{\Delta}(\tau;J)}) \geq \frac{4}{3 r_\tau} \log g(X_{\overline{\Delta}(\tau;J)}) - c_\tau $$
for all $J \vartriangleleft \mathcal{O}_{E_\tau}$ coprime to $2abc$ such that $N(J) \geq 2^{[E_\tau: \Q]}$ and a constant $c_\tau$ independent of $J$.  Since there are only finitely many ideals $J$ with $N(J) < 2^{[E_\tau: \Q]}$, we obtain the claim after possibly increasing $c_\tau$.
\end{proof}

\begin{rem}
If $\tau$ has at most one odd component, then the image of the embedding $j: \overline{\Delta}(\tau) \hookrightarrow N_{A_\tau}(\mathcal{Q}_\tau)/E_\tau^\times$ is not in general contained in $\mathcal{Q}_\tau^1 / \{ \pm 1 \}$.  In this case a straightforward modification of the proof of Theorem~\ref{thm:semiarithmetic} establishes~\eqref{equ:semiarithmetic} for all ideals $J \vartriangleleft \mathcal{O}_{E_\tau}$ such that $J$ is coprime to $2abc$ and $j(\overline{\Delta}(\tau;J)) \subseteq \mathcal{Q}_\tau^1 / \{ \pm 1 \}$.  It would be interesting to resolve whether Theorem~\ref{thm:semiarithmetic} is valid for all hyperbolic triples.
\end{rem}

\begin{rem}
In the case of arithmetic Fuchsian groups $\Gamma$, the result of Cosac and D\'{o}ria~\cite[Theorem~1.5]{CosacDoria/20} cited above recovers an earlier theorem of Katz, Schaps, and Vishne~\cite[Theorem~1.5]{KSV/07}, which gave the bound $\mathrm{sys}(X_{\Gamma(J)}) \geq \frac{4}{3} g(X_{\Gamma(J)}) - c_{\Gamma}$.  Here $\Gamma$ is commensurate with $\mathcal{O}^1 / \{ \pm 1 \}$ for an order $\mathcal{O}$ in a quaternion algebra defined over a totally real field $F$ and split at exactly one infinite place, and $\Gamma(J) = \Gamma \cap \mathcal{O}^1(J)$.
This generalized earlier work of Buser and Sarnak~\cite[(4.7)]{BS/94} in the case $F = \Q$.  Makisumi~\cite[Theorem~1.6]{Makisumi/13} has shown that the coefficient $\frac{4}{3}$ is the best possible in a result of this form. 
See, for instance,~\cite{MaclachlanReid/03} for details on arithmetic Fuchsian groups.  Indeed, Theorem~\ref{thm:semiarithmetic} in the case $r_\tau = 1$ was known to us before~\cite{CosacDoria/20} appeared.
\end{rem}

In general Theorem~\ref{thm:semiarithmetic} does not provide an effective lower bound on $\mathrm{sys}(X_{\overline{\Delta}(\tau;J)})$, since no effective constraint on the constant $c_\tau$ is known.  In some arithmetic cases, it is known that one may take $c_\tau = 0$; see~\cite[Theorem~1.10]{KSV/07} and~\cite[Proposition~9.1]{KKSV/16} for sufficient conditions.  These results rely on the trace bounds of~\cite[Theorem~2.3]{KSV/07} and do not readily extend to non-arithmetic situations.  The upper bounds on $\mathrm{sys}(X_{\overline{\Delta}(\tau;J)})$ obtained by the method of Section~\ref{sec:procedure} below show that one may {\emph{not}} take $c_\tau = 0$ for some specific triples $\tau$.  Finally, we note that the bounds given by Theorem~\ref{thm:semiarithmetic} are, in general, transcendental.  However, it follows from Remark~\ref{rmk:embedding} that traces of elements of $\overline{\Delta}(\tau;J)$, and hence $\mathrm{sys}(X_{\overline{\Delta}(\tau;J)})$ and our upper bound, are contained in the totally real field $F_\tau$ and are thus algebraic numbers.  Hence one cannot expect to determine the exact values of the systolic lengths of triangular modular curves by matching the upper bounds of Section~\ref{sec:procedure} with lower bounds arising from Theorem~\ref{thm:semiarithmetic}.

\section{Computations} \label{sec:computations}
\subsection{Procedure} \label{sec:procedure}
Let $\tau = (a,b,c)$ be a hyperbolic triple, let $\overline{\Delta} = \overline{\Delta}(\tau)$, and let $\p \vartriangleleft \mathcal{O}_{E_\tau}$ be a prime ideal such that the congruence subgroup $\overline{\Delta}(\p)$ is defined.  Our aim is to bound the systolic length of the compact Riemann surface $X_{\overline{\Delta}(\p)}$.  In fact, for any finite real number $M > 0$, the set of numbers at most $M$ that occur as lengths of closed geodesics of $X_{\overline{\Delta}(\p)}$ could likely be computed by adapting the method used by Vogeler~\cite{Vogeler/03} in the Hurwitz case $\tau = (2,3,7)$; see also earlier work~\cite{DT/00, LW/02, Woods/01} by undergraduate participants of an REU at Rose-Hulman directed by S.~A.~Broughton.

We take a simpler but less precise approach that was introduced in~\cite{KKSV/16} in the case of $\tau = (3,3,4)$.  Recall from Section~\ref{sec:traces} that determining $\mathrm{sys}(X_{\overline{\Delta}(\tau;\p)})$ amounts to finding
$$ \min \left\{ \left\{ | \mathrm{tr} \, \varepsilon(\tilde{\gamma}) | : \gamma \in \overline{\Delta}(\tau;\p) \right\} \cap (2, \infty) \right\},$$
where $\gamma$ runs over all elements of the infinite group $\overline{\Delta}(\tau;\p)$, where $\tilde{\gamma} \in \Delta(\tau)$ is a lift of $\gamma$, and $\varepsilon: \Delta(\tau) \hookrightarrow \mathrm{SL}_2(\R)$ is some embedding; by work of Takeuchi mentioned above, the result is independent of the choice of $\varepsilon$.  

We restrict to a set of Schreier generators of $\overline{\Delta}(\tau;\p)$.  This is a {\emph{finite}} generating set with some pleasant properties; details may be found in classical references on combinatorial group theory such as~\cite[Theorem~2.9]{MKS/66}.  Among the generating sets of a subgroup of a finitely presented group obtainable by available algorithms, the elements of a set of Schreier generators are expressible as relatively short words in the generators $\alpha$ and $\beta$ of $\overline{\Delta}(\tau)$.  Thus we may hope that the hyperbolic Schreier generators will have relatively low traces.  The minimal trace of a hyperbolic Schreier generator is our upper bound on $\mathrm{sys}(X_{\overline{\Delta}(\tau;\p)})$.  Of course, it is possible that some other hyperbolic element of $\overline{\Delta}(\tau)$ has a lower trace than that of any of our Schreier generators.  However, as mentioned in the introduction, in all the examples that we have computed where $\mathrm{sys}(X_{\overline{\Delta}(\tau;\p)})$ is known, it matches our bound.

The procedure behind our computations is presented below.  We assume that $\p \vartriangleleft \mathcal{O}_{E_\tau}$ is prime as this streamlines the discussion of possible simplifications of the various steps of the computation.  However, the same procedure may be used to find upper bounds on $\mathrm{sys}(X_{\overline{\Delta}(\tau;J)})$ for composite ideals $J$; in this case the description of the quotient $\overline{\Delta}(\tau)/ \overline{\Delta}(\tau;J)$ given in~\eqref{equ:general.quotient} should be used in Step 1 instead of Proposition~\ref{pro:CV.congruence.quotient}.

Our code is available at {\texttt{github.com/mmschein/systole}}.

{\bf{Step 1:}} {\emph{Identify subgroups $H \trianglelefteq \overline{\Delta}(\tau)$ such that $\overline{\Delta}(\tau)/H \simeq K(\tau;p)$.}}

If $|K(\tau;p)|$ is reasonably small, then Magma's {\texttt{LowIndexNormalSubgroups}} routine may be used.  When $K(\tau;p) = \mathrm{PSL}_2(\F_q)$ with $q > 3$, it follows from the classification of finite simple groups that $K(\tau;p)$ is the unique simple group of order $\frac{q}{(2,q-1)} (q^2 - 1)$ and so it is particularly easy to identify relevant subgroups $H \trianglelefteq \overline{\Delta}(\tau)$ from the output of {\texttt{LowIndexNormalSubgroups}}.   

If $|K(\tau;p)|$ is large, it is more efficient to generate random pairs of elements $(z_1, z_2) \in K(\tau;p)^2$ and select those for which $(z_1, z_2, (z_1 z_2)^{-1}) \in T(\tau;p)$; recall that $T(\tau;p)$ was defined in Section~\ref{sec:normal.subgroups}.  Such a pair gives rise to an epimorphism $\psi: \overline{\Delta}(\tau) \twoheadrightarrow K(\tau;p)$ determined by $\psi(x) = z_1$ and $\psi(y) = z_2$.  Then we consider the kernel $H = \ker \psi$.

{\bf{Step 2:}} {\emph{Compute a set of Schreier generators for $H$.}}

See~\cite[\S4]{Neubuser/82} and~\cite[\S6]{Sims/94} for algorithms computing Schreier generators.  We have used the implementation in Magma.  This is by far the most time-consuming step of our computations.

{\bf{Step 3:}} {\emph{Determine whether there is an ideal $\p \vartriangleleft \mathcal{O}_{E_\tau}$ dividing $p$ such that $H = \overline{\Delta}(\tau;\p)$, and identify $\p$ if so.}}

The most general method for doing this is as follows.  Recall from Remark~\ref{rmk:embedding} that we have an explicit embedding $\varepsilon_0: {\Delta}(\tau) \hookrightarrow \mathcal{O}_\tau^1$.  Let $S_H$ be the set of Schreier generators of $H$ computed in the previous step.  For every $s \in S_H$, we compute $\varepsilon_0(s) = u_0 + u_1 i + u_2 j + u_3 ij$, in terms of the presentation of $B_\tau$ given in Proposition~\ref{pro:alpha.beta.fulldelta}.  Set
$$
\left( \begin{array}{c} v_0 \\ v_1 \\ v_2 \\ v_3 \end{array} \right) = 
\left( \begin{array}{cccc} 
1 & \cos \frac{\pi}{a} & \cos \frac{\pi}{b} & - \cos \frac{\pi}{c} \\
0 & \frac{1}{2} & - \frac{\cos \frac{\pi}{a} \cos \frac{\pi}{b} + \cos \frac{\pi}{c}}{2 \left( \cos^2 \frac{\pi}{a} - 1 \right)} & - \frac{\cos \frac{\pi}{b} + \cos \frac{\pi}{a} \cos \frac{\pi}{c}}{2 \left( \cos^2 \frac{\pi}{a} - 1 \right)} \\
0 & 0 & 0 & \frac{1}{2} \\
0 & 0 & \frac{1}{4 \left( \cos^2 \frac{\pi}{a} - 1 \right)} & \frac{\cos \frac{\pi}{a}}{4 \left( \cos^2 \frac{\pi}{a} - 1 \right)}
\end{array} \right)^{-1}
\left( \begin{array}{c} u_0 \\ u_1 \\ u_2 \\ u_3 \end{array} \right).
$$
Then the $v_i$ are the coefficients of $\varepsilon_0(s)$ with respect to the $F_\tau$-basis $(1, \alpha, \beta, \alpha \beta)$ of $B_\tau$.  If, for all $s \in S_H$, we have $v_0 \equiv \pm 1 \, \mathrm{mod} \, \mathfrak{P}$ and $v_1, v_2, v_3 \in \mathfrak{P}$ for a prime ideal $\mathfrak{P} \vartriangleleft \mathcal{O}_{F_\tau}$ dividing $\p$, then $H \leq \overline{\Delta}(\tau;\p)$ and hence $H = \overline{\Delta}(\tau;\p)$ as they are subgroups of $\overline{\Delta}(\tau)$ of the same index.

This step of the computation can often be simplified.  For instance, the upper bound on $\mathcal{N}(\tau;p) =  \left\{ H \trianglelefteq \overline{\Delta}(\tau) : \overline{\Delta}(\tau) / H \simeq K(\tau;p) \right\}$ obtained from Proposition~\ref{pro:counting.normal.subgroups} is sometimes equal to the number of prime ideals of $\mathcal{O}_{E_\tau}$ dividing $p$.  In this case, any subgroup $H$ obtained in Step 1 is necessarily a congruence subgroup, and testing just a few Schreier generators will suffice to eliminate all $\p | p$ but one.

If, futhermore, $\mathcal{O}_{F_\tau} / \mathfrak{P} = \mathcal{O}_{E_\tau} / \p$ (or, equivalently, $K(\tau;p) = \mathrm{PSL}_2(k_\p)$), then we may dispense with Schreier generators entirely in this step.  Indeed, the explicit homomorphism $\Psi_\p: \overline{\Delta}(\tau) \to \mathrm{PSL}_2(\mathcal{O}_{F_\tau} / \mathfrak{P})$, whose kernel is $\overline{\Delta}(\tau;\p)$, is given in~\cite[\S5]{CV/19}.  Here $\mathfrak{P}$ is a prime of $F_\tau$ dividing $\p$.
We know by~\cite[Proposition~5.23]{CV/19} that 
 $\Psi_\p(x)$, $\Psi_\p(y)$, and $\Psi_\p((xy)^{-1})$ have traces $\pm (2 \cos \frac{\pi}{a}) \, \mathrm{mod} \, \mathfrak{P}$, $\pm (2 \cos \frac{\pi}{b}) \, \mathrm{mod} \, \mathfrak{P}$, and $\pm (2 \cos \frac{\pi}{c}) \, \mathrm{mod} \, \mathfrak{P}$, respectively.  Then in Step 1 we search for an epimorphism $\psi: \overline{\Delta}(\tau) \twoheadrightarrow \mathrm{PSL}_2(k_\p)$ satisfying these trace conditions and can conclude immediately that $\ker \psi = \overline{\Delta}(\tau;\p)$.  It is still necessary to determine the Schreier generators of $\overline{\Delta}(\tau;\p)$ for the following step.
 
 {\bf{Step 4:}} {\emph{Determine the minimal trace of a Schreier generator.}}
 
 For every Schreier generator $s \in S_H$, we compute $| \mathrm{tr} \, \varepsilon(\tilde{s}) |$, where $\tilde{s} \in \Delta(\tau)$ is a lifting of $s$ and $\epsilon: \Delta(\tau) \hookrightarrow \mathrm{SL}_2(\R)$ is some embedding.  It is usually convenient to take $\varepsilon$ to be the explicit embedding of Remark~\ref{rmk:embedding}.  Our upper bound on $\mathrm{sys}(X_{\overline{\Delta}(\tau;\p)})$ is then 
 $$ \min \left\{ \left\{ | \mathrm{tr} \, \varepsilon(\tilde{s}) | : s \in S_{\overline{\Delta}(\tau;\p)} \right\} \cap (2, \infty) \right\}.$$

\begin{rem}
Let $g$ be the genus of $X_{\overline{\Delta}(\tau;\p)}$; recall that an explicit formula for $g$ was given in~\eqref{equ:genus}.  In all cases where we have computed it, Reidemeister-Schreier rewriting as implemented in Magma indeed produces the simplest presentation of $\overline{\Delta}(\tau;\p)$, with $2g$ generators and a single relation of length $4g$ in which each generator and its inverse appear exactly once.
\end{rem}

In the remainder of this section we consider some specific examples of the computations outlined above.

\subsection{Hurwitz surfaces} \label{sec:hurwitz}
A Hurwitz surface is a compact Riemann surface $X$ of genus $g(X)$ with exactly $84(g(X) - 1)$ automorphisms, which is the maximal number possible by a classical theorem of Hurwitz.  These have been studied since 1879, when Klein discovered his celebrated quadric of genus three, with $168$ automorphisms.  Together with the theorem mentioned above, Hurwitz showed~\cite[\S7]{Hurwitz/93} that a group of order $84(g-1)$ is the automorphism group of some Riemann surface of genus $g$ if and only if it is a homomorphic image of the triangle group $\overline{\Delta}(2,3,7)$.  Thus the new Hurwitz surfaces that have been discovered over the past century have amounted to constructions of normal subgroups of $\overline{\Delta}(2,3,7)$ of finite index; as a very incomplete sample of these results, we mention the work of Sinkov~\cite{Sinkov/37}, Macbeath~\cite{Macbeath/61, Macbeath/65}, Leech~\cite{Leech/65}, Lehner and Newman~\cite{LN/67}, and Cohen~\cite{Cohen/79}.  The construction of congruence subgroups of $\overline{\Delta}(\tau)$ discussed above, in the special case $\tau = (2,3,7)$, recovers some, but not all, of this previous work.

Set $\mu = 2 \cos \frac{\pi}{7}$ for brevity.  If $\tau = (2,3,7)$, then $F_\tau = E_\tau = \Q(\mu)$.  This is a cubic field, where $\mu$ has minimal polynomial $x^3 - x^2 - 2x + 1$ over $\Q$.  Let $\zeta_7$ be a primitive seventh root of unity such that $- (\zeta_7 + \zeta_7^{-1}) = \mu$.  Then $\Q(\mu)$ is contained in the cyclotomic field $\Q(\zeta_7)$.  In particular, $\mathcal{O}_{F_\tau} = \Z [\mu]$.  We determine the decomposition of primes in $\Q(\mu)$.

\begin{lemma} \label{lem:hurwitz.decomposition}
The prime $7$ is totally ramified in $\Q(\mu)$.  If $p \neq 7$ is a rational prime, then $p$ splits completely in $\Q(\mu)$ if $p \equiv \pm 1 \, \mathrm{mod} \, 7$, and $p$ is inert otherwise.
\end{lemma}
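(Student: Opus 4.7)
The plan is to exploit the cyclotomic tower $\Q \subset \Q(\mu) \subset \Q(\zeta_7)$. Since $\mu = -(\zeta_7 + \zeta_7^{-1})$ lies in the maximal real subfield $\Q(\zeta_7)^+$, and since $[\Q(\mu):\Q] = 3 = \frac{1}{2}[\Q(\zeta_7):\Q]$ identifies $\Q(\mu)$ as the unique cubic subfield of $\Q(\zeta_7)$, we have $\Q(\mu) = \Q(\zeta_7)^+$. By Galois theory, $\mathrm{Gal}(\Q(\zeta_7)/\Q) \simeq (\Z/7\Z)^\times$ is cyclic of order $6$, and $\mathrm{Gal}(\Q(\mu)/\Q) \simeq (\Z/7\Z)^\times/\{\pm 1\}$ is cyclic of order $3$.

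For $p = 7$, the standard factorization $7\, \mathcal{O}_{\Q(\zeta_7)} = (1 - \zeta_7)^6$ shows that $7$ is totally ramified in $\Q(\zeta_7)/\Q$. A prime that is totally ramified in a tower $L/K$ remains totally ramified in every intermediate field, since ramification indices are multiplicative and are each bounded above by the corresponding degree; hence $7$ is totally ramified in $\Q(\mu)$ as well.

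For $p \neq 7$, the prime $p$ is unramified in $\Q(\zeta_7)$ and therefore in $\Q(\mu)$. Its Frobenius in $\mathrm{Gal}(\Q(\zeta_7)/\Q)$ is the class of $p$ in $(\Z/7\Z)^\times$, and its image in $\mathrm{Gal}(\Q(\mu)/\Q) \simeq (\Z/7\Z)^\times/\{\pm 1\}$ is trivial precisely when $p \equiv \pm 1 \pmod{7}$. In an unramified abelian extension the decomposition of $p$ is determined by the order of its Frobenius: when the Frobenius is trivial, $p$ splits completely into three primes, while if it has order $3 = [\Q(\mu):\Q]$ there is a unique prime above $p$ with full residue degree and $p$ is inert. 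This is exactly the dichotomy asserted by the lemma.

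The argument involves no real obstacle --- it is entirely standard algebraic number theory. The only point needing verification is the inclusion $\mu \in \Q(\zeta_7)$, which is immediate from $\mu = -(\zeta_7 + \zeta_7^{-1})$. As an alternative one could apply Dedekind's factorization theorem directly to $x^3 - x^2 - 2x + 1$ modulo $p$ (permissible since $\mathcal{O}_{F_\tau} = \Z[\mu]$) and split into congruence cases, but the cyclotomic route is more transparent and makes the role of the subgroup $\{\pm 1\}$ explicit.
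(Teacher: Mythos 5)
Your proof is correct and takes essentially the same route as the paper: both pass through the cyclotomic tower $\Q \subset \Q(\mu) \subset \Q(\zeta_7)$, deduce total ramification of $7$ from its total ramification in $\Q(\zeta_7)$, and for $p \neq 7$ read off the splitting from the order of $p$ modulo $7$ (the paper phrases this via inertia degrees $f(\mathfrak{P}/p) \in \{1,2\}$, you via the image of the Frobenius in the quotient $(\Z/7\Z)^\times/\{\pm 1\}$, which is the same computation).
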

\begin{proof}
We rely on basic facts about cyclotomic fields; see, for instance,~\cite[{\S}I.10]{Neukirch/92}.  The only prime ramifying in $\Q(\zeta_7)$ is $7$, and it is totally ramified.  Therefore the same is true for the subfield $\Q(\mu)$.  Now assume $p \neq 7$, let $\p$ be a prime of $\Q(\mu)$ dividing $p$, and let $\mathfrak{P}$ be a prime of $\Q(\zeta_7)$ dividing $\p$.  Since $\Q(\mu)/\Q$ is a cubic Galois subextension of the sextic Galois extension $\Q(\zeta_7) / \Q$, it is clear that $p$ is completely split in $\Q(\mu)$ if and only if the inertia degree is $f(\p / p) = 1$, which in turn is equivalent to $f(\mathfrak{P} / p) \in \{1, 2 \}$.  Otherwise, $f(\p / p) = 3$ and $p$ is inert in $\Q(\mu)$.  Now $f(\mathfrak{P} / p)$ is the minimal $f \in \N$ such that $p^f \equiv 1 \, \mathrm{mod} \, 7$.  Thus $f(\mathfrak{P}/p) \in \{1, 2 \}$ if and only if $7 | (p^2 - 1)$, which is equivalent to $p \equiv \pm 1 \, \mathrm{mod} \, 7$.
\end{proof}

Since $\overline{\Delta}(2,3,7) = \overline{\Delta}^{(2)} (2,3,7)$, we have $\overline{\Delta}(2,3,7) \leq \mathcal{O}_\tau^1 / \{ \pm 1 \}$ as a subgroup of finite index, where by Proposition~\ref{pro:alpha.beta.fulldelta} the order $\mathcal{O}_\tau \subset B_\tau$ of the quaternion algebra $B_\tau = \left \langle \frac{-4, \mu^2 - 3}{F_\tau} \right \rangle$ has $\mathcal{O}_{F_\tau}$-basis $\{ 1, \alpha, \beta, \alpha \beta \}$, with $\alpha = \frac{i}{2}$ and $\beta = \frac{1}{2} + \frac{\mu}{4} i - \frac{1}{4}ij$.  In fact, $\overline{\Delta}(2,3,7)$ is arithmetic~\cite[Theorem~3]{Takeuchi/77}, so $B_\tau$ splits only at the distinguished infinite place of $F_\tau$.

\begin{rem}
The construction given here as a special case of Takeuchi's general theory is equivalent to the ``Hurwitz order'' $\mathcal{Q}_{\mathrm{Hur}}$ studied by Katz, Schaps, and Vishne~\cite[(2.8)]{KSV/11}.  Indeed, they denote $\eta = 2 \cos \frac{2 \pi}{7} = \mu^2 - 2$.  Observe that $\Q(\mu) = \Q(\eta)$ by Remark~\ref{rmk:chebyshev} and that
\begin{eqnarray*}
\varphi : \left \langle \frac{-4, \mu^2 - 3}{\Q(\mu)} \right \rangle & \to & \left \langle \frac{\eta, \eta}{\Q(\eta)} \right \rangle \\
\varphi(i) & = & (2 \mu - 2) ij \\
\varphi(j) & = & -(\mu^2 - \mu - 1) i 
\end{eqnarray*}
is an isomorphism of quaternion algebras between our $B_\tau$ and the algebra denoted $D$ in~\cite{KSV/11}.  Moreover, $\varphi(\alpha) = g_2$ and $\varphi(\beta) = g_3$, where $g_2, g_3 \in D$ are as defined in~\cite[\S4]{KSV/11}; they arise from work of Elkies~\cite{Elkies/99}.  Since $\mathcal{Q}_{\mathrm{Hur}}$ is spanned by $\{1, g_2, g_3, g_2 g_3 \}$ over $\Z[\mu]$ by~\cite[Theorem~4.2]{KSV/11}, we have $\varphi(\mathcal{O}_\tau) = \mathcal{Q}_{\mathrm{Hur}}$.
\end{rem}

We have $\mathcal{O}_\tau^1 / \{ \pm 1 \} = \simeq \overline{\Delta} = \overline{\Delta}(2,3,7)$~\cite[\S4.4]{Elkies/99}.  The order $\mathcal{O}_\tau \subset B_\tau$ is maximal and splits at all finite places of $\Q(\mu)$ since the quaternion algebra $B_\tau$ does so.  Hence for any ideal $I \vartriangleleft \mathcal{O}_{F_{\tau}} = \Z [\mu]$ we get a congruence subgroup $\overline{\Delta}(I) \vartriangleleft \overline{\Delta}$ such that $\overline{\Delta} / \overline{\Delta}(I) \simeq \mathrm{PSL}_2 (\mathcal{O}_{F_\tau} / I)$.  By~\cite[Proposition~9.1]{KKSV/16} we conclude that $\mathrm{sys} ( X_{\overline{\Delta}(I)}) > \frac{4}{3} g(X_{\overline{\Delta}(I)})$ for all but finitely many ideals $I \vartriangleleft \Z [\mu]$; indeed, $T_1 = T_2 = \varnothing$ in the notation of~\cite{KKSV/16} and  $2^6 = 64 < 84 = \frac{4 \pi}{2\pi \left( 1 - \frac{1}{2} - \frac{1}{3} - \frac{1}{7} \right)}$. By~\cite[Remark~9.2]{KKSV/16} this inequality holds for all $I$ such that $g(X_{\overline{\Delta}(I)}) \geq 85$.  

\begin{pro} \label{pro:hurwitz.normal.subgroups}
Let $p$ be prime, and let $H \trianglelefteq \overline{\Delta} = \overline{\Delta}(2,3,7)$ be a normal subgroup.  Then
$$ \overline{\Delta} / H \simeq \begin{cases}
\mathrm{PSL}_2(\F_p) &: p \equiv 0, \pm 1 \, \mathrm{mod} \, 7 \\
\mathrm{PSL}_2(\F_{p^3}) &: p \not\equiv 0, \pm 1 \, \mathrm{mod} \, 7
\end{cases}
$$
if and only if $H = \overline{\Delta}(\p)$, where $\p \vartriangleleft \Z[\mu]$ is a prime dividing $p$.
\end{pro}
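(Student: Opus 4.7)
The implication from $H = \overline{\Delta}(\p)$ to the stated quotient is immediate. Since $F_\tau = E_\tau = \Q(\mu)$ by Remark~\ref{rmk:chebyshev}, the extension $F_\tau/E_\tau$ is trivial and every prime splits completely in it; hence Proposition~\ref{pro:CV.congruence.quotient} (extended to the ramified prime above $p=7$ using the equality $\overline{\Delta} = \mathcal{O}_\tau^1/\{\pm 1\}$ recalled in this section) yields $\overline{\Delta}/\overline{\Delta}(\p) \simeq \mathrm{PSL}_2(k_\p)$, and Lemma~\ref{lem:hurwitz.decomposition} identifies the residue field in each of the three cases of the statement.

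For the converse, the plan is to match an upper bound on $|\mathcal{N}(\tau;p)|$ produced by Proposition~\ref{pro:counting.normal.subgroups} against the number of congruence subgroups. By Lemma~\ref{lem:hurwitz.decomposition}, there are three distinct primes of $\Z[\mu]$ above $p$ when $p \equiv \pm 1 \bmod 7$ and exactly one otherwise. The corresponding congruence subgroups are pairwise distinct, since the reductions of $\mu$ modulo the different primes above $p$ produce distinct trace triples in the quotient, and these traces are invariant under $\mathrm{Aut}(\mathrm{PSL}_2(\F_p))$ up to sign.

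To invoke Proposition~\ref{pro:counting.normal.subgroups}, I would verify its hypotheses. Because $2, 3, 7$ are prime, any epimorphism onto a nonabelian group $K(\tau;p)$ forces the images of $x$, $y$, $xy$ to have the full orders $2, 3, 7$, so $\Omega(\tau;p) = \{(2,3,7)\}$, which is not exceptional. The associated trace triple up to signs is $\pm(0, 1, -\bar\mu)$, whose Macbeath commutativity invariant (Lemma~\ref{lem:commutative}) equals $\mu^2 - 3$; a short resultant computation gives $N_{\Q(\mu)/\Q}(\mu^2 - 3) = \pm 1$, so $\mu^2 - 3$ is a unit in $\Z[\mu]$ and the trace triple is non-commutative modulo every prime. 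By Macbeath's trichotomy it is therefore partly projective. Finally $\F_q = \F_p(\bar\mu)$ holds in each case by Lemma~\ref{lem:hurwitz.decomposition}. Applying Proposition~\ref{pro:counting.normal.subgroups} together with Lemma~\ref{lem:eigenvalues} yields $|\mathcal{N}(\tau;p)| \leq \kappa(2,q)\kappa(3,q)\kappa(7,q) = 3$.

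When $p \equiv \pm 1 \bmod 7$ this upper bound coincides with the three distinct congruence subgroups, forcing equality and the desired classification. The main obstacle is the inert case, where the bound of $3$ exceeds the single existing congruence subgroup. I would sharpen it by observing that the Frobenius of $\mathrm{Gal}(\F_{p^3}/\F_p)$ induces an outer automorphism of $\mathrm{PSL}_2(\F_{p^3})$ that cyclically permutes the three conjugacy classes of order-seven elements, whose traces $\pm(\zeta + \zeta^{-1})$ form a single Galois orbit in $\F_{p^3}$. Hence the three triples $\underline{C} \in \mathcal{C}(\tau;p)$ fall in a single $\mathrm{Aut}(K(\tau;p))$-orbit, the inequality between $|T(\tau;p)/\mathrm{Aut}(K(\tau;p))|$ and $\sum_{\underline{C}}|\Sigma(\underline{C})/\mathrm{Aut}(K(\tau;p))|$ becomes strict, and the effective bound drops to $1$. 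The remaining cases $p = 7$ and $p \in \{2,3\}$, excluded from Section~\ref{sec:normal.subgroups}, are closed by the analogous arguments indicated in the remark at the end of that section or by direct inspection of $\mathrm{PSL}_2(\F_7)$, $\mathrm{PSL}_2(\F_8)$, and $\mathrm{PSL}_2(\F_{27})$.
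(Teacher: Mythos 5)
Your proposal is correct and follows essentially the same route as the paper's proof: force $\Omega(\tau;p)=\{(2,3,7)\}$, verify via Lemma~\ref{lem:commutative} that the three candidate trace triples are non-commutative and non-exceptional, apply Proposition~\ref{pro:counting.normal.subgroups} with Lemma~\ref{lem:eigenvalues} to get the bound $\kappa(2,q)\kappa(3,q)\kappa(7,q)=3$, use the Frobenius action to collapse this to $1$ in the inert case, and treat $p\in\{2,3,7\}$ separately (the paper does this by a Magma computation). Your observation that $N_{\Q(\mu)/\Q}(\mu^2-3)$ is a unit, so that non-commutativity holds uniformly modulo every prime, is a nice way to package a verification the paper simply asserts, but it does not change the structure of the argument.
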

\begin{proof}
One direction is clear by the previous paragraph and Lemma~\ref{lem:hurwitz.decomposition}.  
The other direction is contained in~\cite[Theorem~8]{Macbeath/69} and historically was the first application of Macbeath's classification.  We give the proof in some detail, as an illustration of the machinery of Section~\ref{sec:normal.subgroups}, although it is essentially the same as the one in~\cite{Macbeath/69}.

We first compute, using Magma's \texttt{LowIndexNormalSubgroups} function, that there is a unique normal subgroup $H \trianglelefteq \overline{\Delta}$ for which the quotient $\overline{\Delta}/H$ is isomorphic to each of $\mathrm{PSL}_2(\F_7)$, $\mathrm{PSL}_2(\F_8)$, and $\mathrm{PSL}_2(\F_{27})$.  These facts can also be proven without reliance on a computer by a suitable modification of the arguments below, but we take the easier path.  Thus we may henceforth assume $p \not\in \{2,3,7 \}$.  It suffices to show that there are at most three normal subgroups $H \trianglelefteq \overline{\Delta}$ satisfying $\overline{\Delta} / H \simeq \mathrm{PSL}_2(\F_p)$ if $p$ splits in $\Q(\mu)$, and at most one normal subgroup satisfying $\overline{\Delta} / H \simeq \mathrm{PSL}_2(\F_{p^3})$ if $p$ is inert.  

Let $\p$ be a place of $\Q(\mu)$ dividing $p$.  Then $K(\tau;p) = \mathrm{PSL}_2(\F_q)$, where $q = p$ if $p$ splits in $\Q(\mu)$ and $q = p^3$ otherwise.  Let $(\gamma_1, \gamma_2, \gamma_3) \in T(\tau;p)$, in the notation of Section~\ref{sec:normal.subgroups}.  We claim that $\gamma_1$, $\gamma_2$, and $\gamma_3$ have order $2$, $3$, and $7$, respectively, as elements of $\mathrm{PSL}_2(\F_q)$.  Indeed, otherwise one of the $\gamma_i$ would be trivial, and consequently the subgroup $\langle \gamma_1, \gamma_2, \gamma_3 \rangle$ would be trivial, contradicting the definition of $T(\tau;p)$.  Thus $\Omega(\tau;p) = \{ (2,3,7) \}$, in the notation of Proposition~\ref{pro:counting.normal.subgroups}.  Moreover, it follows from Lemma~\ref{lem:eigenvalues} that $\tr \gamma_1 = 0$ and $\tr \gamma_2 = \pm 1$, whereas $\tr \gamma_3$ is one of $\pm(\zeta_7 + \zeta_7^{-1})$, $\pm(\zeta_7^2 + \zeta_7^{-2})$, or $\pm(\zeta_7^3 + \zeta_7^{-3})$, where $\zeta_7$ is a primitive seventh root of unity.  Thus there are three potential trace triples up to sign associated to elements of $T(\tau;p)$.  By Lemma~\ref{lem:commutative}, none of them are commutative.  Moreover, they are not exceptional, since for any lift $\underline{t}$ of one of them and any $\underline{g} \in T(\underline{t})$, we have $o(\underline{g}) = (2,3,7)$.  Hence all three trace triples up to signs are partly projective.  It now follows from Proposition~\ref{pro:counting.normal.subgroups} and Lemma~\ref{lem:eigenvalues} that 
\begin{equation} \label{equ:hurwitz.count}
 | \mathcal{N}(\tau;p) | \leq \kappa(2,q) \, \kappa(3,q) \, \kappa(7,q) = \frac{\varphi(3) \, \varphi(7)}{4} = 3.
\end{equation}
Hence there are at most three subgroups $H \trianglelefteq \overline{\Delta}(\tau)$ satisfying $\overline{\Delta}(\tau) / H \simeq \mathrm{PSL}_2(\F_q)$.

It remains to show that there is only one such normal subgroup if $p$ is inert in $\Q(\mu)$.  In this case, let $\sigma(\alpha) = \alpha^p$ be the Frobenius automorphism of $\F_q = \F_{p^3}$.  The action of $\sigma$ on matrix elements induces an automorphism of $\mathrm{PSL}_2(\F_q)$.  Observe that 
$$\{ \sigma(\pm(\zeta_7 + \zeta_7^{-1})), \sigma^2(\pm(\zeta_7 + \zeta_7^{-1})) \} = \{ \pm(\zeta_7^2 + \zeta_7^{-2}), \pm (\zeta_7^3 + \zeta_7^{-3}) \}.$$  
Thus, if the epimorphism $\psi : \overline{\Delta}(\tau) \twoheadrightarrow \mathrm{PSL}_2(\F_q)$ corresponds to an element of $T(\tau;p)$ having one of the three possible trace triples up to signs, the epimorphisms $\sigma \circ \psi$ and $\sigma^2 \circ \psi$, which have the same kernel, correspond to the other two trace triples up to signs.  Equivalently, a single orbit of the action of $\langle \sigma \rangle \leq \mathrm{Aut}(K(\tau;p))$ intersects $\Sigma(\underline{C}) \cap T(\tau;p)$ for all three elements $\underline{C} \in \mathcal{C}(\tau;p)$.  In either case, we see from the proof of Proposition~\ref{pro:counting.normal.subgroups} that $\mathcal{N}(\tau;p)| = 1$.
\end{proof}

We tabulate below our bounds on $\mathrm{sys}(X_{\overline{\Delta}(I)})$ for all proper ideals $I \vartriangleleft \mathcal{O}_{F_\tau}$ of norm $N(I) \leq 100$, to an accuracy of three decimal places.  For all the prime ideals in this table, Vogeler~\cite[Appendix~C]{Vogeler/03} has computed length spectra of the corresponding Riemann surfaces.  In all cases, the lowest trace identified by us corresponds to the smallest geodesic length found by Vogeler.  His tables are complete for lengths smaller than $14.49$, so they determine the systolic length for surfaces $X$ with $\mathrm{sys}(X) < 14.49$.  This is evidence that our method provides a reasonable estimate for the systolic length $\mathrm{sys}(X_\Gamma)$.  

We see as in~\eqref{equ:genus} that $g(X_{\overline{\Delta}(I)}) = \frac{| \mathrm{PSL}_2(\Z[\mu]/I) |}{84} + 1$ for all $I \vartriangleleft \Z[\mu]$.  
The Riemann surface $X_{\overline{\Delta}(I)}$ of genus $3$ corresponding to the ideal $I$ of norm $7$ in the first line of the table is, of course, none other than the Klein quadric.  The surface $X_{\overline{\Delta}((2))}$ of genus $7$ was studied by Fricke~\cite{Fricke/99} and more than sixty years later by Macbeath~\cite{Macbeath/65}, who was apparently unaware of Fricke's work; see Serre's letter~\cite{Serre/90} to Abhyankar for a more modern perspective on this curve.  If $N$ is a natural number with a unique prime factor $p$ such that $p$ splits or ramifies in $\Q(\mu)$ (i.e. $p \equiv 0, \pm 1 \, \mathrm{mod} \, 7$) and if $(p^\prime)^3 | N$ for all prime factors $p^\prime \neq p$ of $N$, then there exist three ideals $I \vartriangleleft \Z[\mu]$ of norm $N(I) = N$, giving rise to three Hurwitz surfaces of the same genus; these are the Hurwitz triplets.

Not all normal subgroups of $\overline{\Delta}$ arise as $\overline{\Delta}(I)$ for some ideal $I \vartriangleleft \Z[\mu]$.  The example of lowest index is a pair of normal subgroups of index $1344$ found by Sinkov~\cite{Sinkov/37}.  They are conjugate in the full triangle group $\widetilde{\Delta}(2,3,7)$ of~\eqref{equ:full.triangle.group} and correspond to a single chiral Riemann surface of genus $\frac{1344}{84} + 1 = 17$.  In this case, the lowest trace we obtain is $11 \mu^2 + 9 \mu - 7$, corresponding to the systolic length found by Vogeler.  Cohen~\cite[Theorem~2]{Cohen/79} described an infinite family of normal subgroups of $\overline{\Delta}$ whose quotients are not isomorphic to $\mathrm{PSL}_2(\Z[\mu] / I)$ for any ideal $I \vartriangleleft \Z[\mu]$.
\begin{center}
\begin{longtable}{| r | r || r | r || r | r |} \hline
$I$ & $N(I)$ & $x =$ lowest observed trace & $2 \arcosh \frac{x}{2}$ & $g(X_{\overline{\Delta}(I)})$ & $\frac{4}{3} \log g$ \\ \hline \hline
$(\mu + 2)$ & $7$ & $ 2 \mu^2 + \mu - 1$ & $3.936$ & $3$ & $1.465$ \\ \hline
$(2)$ & $8$ & $4 \mu^2 + 4 \mu - 2$ & $5.796$ & $7$ & $2.595$ \\ \hline
$(2 \mu + 1)$ & $13$ & $4 \mu^2 + 4 \mu - 1$ & $5.904$ & $14$ & $3.519$ \\ \hline
$(2 \mu + 3)$ & $13$ & $6 \mu^2 + 5 \mu - 4$ & $6.393$ & $14$ & $3.519$ \\ \hline
$(\mu - 3)$ & $13$ & $7 \mu^2 + 7 \mu - 4$ & $6.888$ & $14$ & $3.519$ \\ \hline
$(3)$ & $27$ & $45 \mu^2 + 36 \mu - 25$ & $10.451$ & $118$ & $6.361$ \\ \hline
$(4 \mu - 1)$ & $29$ & $19 \mu^2 + 15 \mu - 12$ & $8.680$ & $146$ & $6.645$ \\ \hline
$(3 \mu - 4)$ & $29$ & $49 \mu^2 + 41 \mu - 27$ & $10.656$ & $146$ & $6.645$ \\ \hline
$(\mu + 3)$ & $29$ & $73 \mu^2 + 60 \mu - 40$ & $11.442$ & $146$ & $6.645$ \\ \hline
$(4 \mu - 3)$ & $41$ & $33\mu^2 + 26 \mu - 17$ & $9.340$ & $411$ & $8.025$ \\ \hline
$(3 \mu + 1)$ & $41$ & $49 \mu^2 + 40 \mu - 26$ & $10.648$ & $411$ & $8.025$ \\ \hline
$(\mu - 4)$ & $41$ & $121 \mu^2 + 97 \mu - 67$ & $12.432$ & $411$ & $8.025$ \\ \hline
$(2 \mu - 5)$ & $43$ & $49 \mu^2 + 40 \mu - 28$ & $10.628$ & $474$ & $8.215$ \\ \hline
$(3 \mu + 2)$ & $43$ & $55 \mu^2 + 43 \mu - 32$ & $10.824$ & $474$ & $8.215$ \\ \hline
$(5 \mu - 3)$ & $43$ & $86 \mu^2 + 69 \mu - 48$ & $11.747$ & $474$ & $8.215$ \\ \hline
$(\mu + 2)^2$ & $49$ & $105 \mu^2 + 84 \mu - 58$ & $12.147$ & $687$ & $8.710$ \\ \hline
$(2)(\mu + 2)$ & $56$ & $184 \mu^2 + 148 \mu - 102$ & $13.272$ & $1009$ & $9.222$ \\ \hline
$(2)^2$ & $64$ & $80 \mu^2 + 64 \mu - 46$ & $11.593$ & $1537$ & $9.783$ \\ \hline
$(5 \mu - 1)$ & $71$ & $121 \mu^2 + 97 \mu - 66$ & $12.436$ & $2131$ & $10.219$ \\ \hline
$(\mu + 4)$ & $71$ & $133 \mu^2 + 106 \mu - 73$ & $12.619$ & $2131$ & $10.219$ \\ \hline
$(4 \mu - 5)$ & $71$ & $151 \mu^2 + 121 \mu - 83$ & $12.877$ & $2131$ & $10.219$ \\ \hline
$(3 \mu + 5)$ & $83$ & $276 \mu^2 + 220 \mu - 153$ & $14.077$ & $3404$ & $10.844$ \\ \hline
$(8 \mu - 3)$ & $83$ & $384 \mu^2 + 308 \mu - 213$ & $14.742$ & $3404$ & $10.844$ \\ \hline
$(5 \mu - 8)$ & $83$ & $520 \mu^2 + 416 \mu - 289$ & $15.346$ & $3404$ & $10.844$ \\ \hline
$(\mu + 2)(\mu - 3)$ & $91$ & $44 \mu^2 + 36 \mu - 25$ & $10.416$ & $2185$ & $10.252$ \\ \hline
$(\mu + 2)(2 \mu + 1)$ & $91$ & $88 \mu^2 + 72 \mu - 49$ & $11.808$ & $2185$ & $10.252$ \\ \hline
$(\mu + 2)(2 \mu + 3)$ & $91$ & $256 \mu^2 + 205 \mu - 143$ & $13.928$ & $2185$ & $10.252$ \\ \hline
$(3\mu - 7)$ & $97$ & $321 \mu^2 + 257 \mu - 179$ & $14.380$ & $5433$ & $11.467$ \\ \hline
$(4 \mu + 3)$ & $97$ & $412 \mu^2 + 331 \mu - 228$ & $14.884$ & $5433$ & $11.467$ \\ \hline
$(7 \mu - 4)$ & $97$ & $711 \mu^2 + 569 \mu - 395$ & $15.972$ & $5433$ & $11.467$ \\ \hline
\end{longtable}

\end{center}

\subsection{$(2,3,12)$ triangle surfaces} \label{sec:2312}
We next consider the case $\tau = (a,b,c) = (2,3,12)$.  A Riemann surface $X$ of genus $g$ is called maximal if it is a local maximum for the function $\mathrm{sys}(X)$ on the Teichm\"{u}ller space $T_g$.  The $(2,3,12)$ triangle surfaces of genus $3$ and $4$ were shown to be maximal by Schmutz Schaller~\cite[\S8]{Schmutz/93}, so this is a natural family in which to search for surfaces with large systolic length.    

In this section, write $\overline{\Delta}$ for $\overline{\Delta}(2,3,12)$.  Observe that $E_\tau = \Q(\sqrt{3})$ and $F_\tau = \Q(\sqrt{2}, \sqrt{3})$.  The complication of this case, in comparison with the Hurwitz case considered in the previous section, is that now $\overline{\Delta}$ is not contained in $\mathcal{O}^1 / \{ \pm 1 \}$ for any quaternion order $\mathcal{O}$ defined over the invariant trace field $E_\tau$; only the subgroup $\overline{\Delta}^{(2)}$ is.  

\begin{lem} \label{lem:quotient2312}
Let $\overline{\Delta} = \overline{\Delta}(2,3,12)$, and let 
$p \geq 5$ be a rational prime.  Then
$$
K(\tau;p) \simeq
\begin{cases}
\mathrm{PSL}_2(\F_p) &: p \equiv 1,23 \, \mathrm{mod} \, 24 \\
\mathrm{PSL}_2(\F_{p^2}) &: p \equiv 5,7,17,19 \, \mathrm{mod} \, 24 \\
\mathrm{PGL}_2(\F_p) &: p \equiv 11, 13 \, \mathrm{mod} \, 24.
\end{cases}
$$
\end{lem}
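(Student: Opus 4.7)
The plan is to apply Proposition~\ref{pro:CV.congruence.quotient}, which reduces the problem to determining the splitting of $p$ in $E_\tau = \Q(\sqrt{3})$ (to identify the residue field $k_\p$) and the further splitting of a prime $\p \mid p$ of $E_\tau$ in the quadratic extension $F_\tau = E_\tau(\sqrt{2})$ (to distinguish the $\mathrm{PSL}_2$ case from the $\mathrm{PGL}_2$ case). Everything will follow from quadratic reciprocity applied to the primes $2$ and $3$, together with an observation about squares in finite field extensions.

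First I would handle the splitting of $p$ in $E_\tau = \Q(\sqrt{3})$: by quadratic reciprocity, $3$ is a quadratic residue mod $p$ (equivalently $p$ splits in $\Q(\sqrt{3})$) if and only if $p \equiv \pm 1 \pmod{12}$, and otherwise $p$ is inert. This partitions the residue classes mod $24$ into the split classes $\{1,11,13,23\}$ and the inert classes $\{5,7,17,19\}$.

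Next I would analyze the extension $F_\tau/E_\tau$, which is generated by $\sqrt{2}$. A prime $\p \vartriangleleft \mathcal{O}_{E_\tau}$ with residue field $k_\p$ splits in $F_\tau$ precisely when $2$ is a square in $k_\p$. When $p$ is inert in $E_\tau$, we have $k_\p = \F_{p^2}$, and the key observation is that for odd $p$ every element of $\F_p^\times$ is automatically a square in $\F_{p^2}^\times$: indeed $\F_{p^2}^\times$ is cyclic of order $(p-1)(p+1)$, and for $a \in \F_p^\times$ one computes $a^{(p^2-1)/2} = (a^{p-1})^{(p+1)/2} = 1$. Thus $\p$ splits in every inert case, giving $K(\tau;p) \simeq \mathrm{PSL}_2(\F_{p^2})$ when $p \equiv 5,7,17,19 \pmod{24}$. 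When $p$ splits in $E_\tau$, we have $k_\p = \F_p$, and $2$ is a quadratic residue mod $p$ precisely when $p \equiv \pm 1 \pmod{8}$. Cross-tabulating with $p \equiv \pm 1 \pmod{12}$ yields: $p \equiv 1, 23 \pmod{24}$ satisfies both $p \equiv \pm 1 \pmod{12}$ and $p \equiv \pm 1 \pmod{8}$, so $\p$ splits and $K(\tau;p) \simeq \mathrm{PSL}_2(\F_p)$; while $p \equiv 11, 13 \pmod{24}$ satisfies $p \equiv \pm 1 \pmod{12}$ but $p \equiv \pm 3 \pmod{8}$, so $\p$ is inert in $F_\tau$ and $K(\tau;p) \simeq \mathrm{PGL}_2(\F_p)$.

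There is no substantial obstacle here: the argument is essentially a bookkeeping exercise with quadratic reciprocity once Proposition~\ref{pro:CV.congruence.quotient} has been invoked. The only point worth making carefully is the ``automatic'' splitting of $\sqrt{2}$ in the residue field extension when $p$ is inert in $E_\tau$, which explains why the inert case produces a uniform $\mathrm{PSL}_2$ answer across all four residue classes in $\{5,7,17,19\} \pmod{24}$.
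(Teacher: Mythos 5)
Your argument is correct and is exactly the route the paper intends: its proof of this lemma consists of the single sentence that the claim ``follows from Proposition~\ref{pro:CV.congruence.quotient} and basic algebraic number theory,'' and your quadratic-reciprocity bookkeeping (including the observation that every element of $\F_p^\times$ is a square in $\F_{p^2}^\times$, which forces the uniform $\mathrm{PSL}_2(\F_{p^2})$ answer in the inert classes) supplies precisely the omitted details.
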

\begin{proof}
The claim follows from Proposition~\ref{pro:CV.congruence.quotient} and basic algebraic number theory.
\end{proof}

\begin{pro} \label{pro:2312.normal.subgroups}
Let $p \geq 5$ be a rational prime and suppose that $H \trianglelefteq \overline{\Delta} = \overline{\Delta}(2,3,12)$ is a normal subgroup such that $\overline{\Delta} / H \simeq K(\tau;p)$.
Then there exists a prime ideal $\p$ of $\Z [ \sqrt{3} ]$ dividing $p$ such that $H = \overline{\Delta}(\p)$.
\end{pro}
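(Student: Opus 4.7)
My plan mirrors the proof of Proposition~\ref{pro:hurwitz.normal.subgroups}. I will (i) show that each prime $\p \vartriangleleft \Z[\sqrt{3}]$ dividing $p$ yields a distinct element $\overline{\Delta}(\p) \in \mathcal{N}(\tau;p)$; (ii) apply Proposition~\ref{pro:counting.normal.subgroups} to bound $|\mathcal{N}(\tau;p)|$ from above; and (iii) match these counts, using a Galois argument in the inert case.  Distinctness of $\overline{\Delta}(\p_1)$ and $\overline{\Delta}(\p_2)$ for $\p_1 \ne \p_2$ is immediate because their intersection is $\overline{\Delta}(\p_1 \p_2)$, which has strictly larger index, while $\overline{\Delta}/\overline{\Delta}(\p) \simeq K(\tau;p)$ by Lemma~\ref{lem:quotient2312}.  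Thus there are $2$ congruence subgroups to account for when $p \equiv \pm 1 \,\mathrm{mod}\, 12$ and $1$ when $p \equiv \pm 5 \,\mathrm{mod}\, 12$; these are the only cases, since $p = 2,3$ are excluded by $p \geq 5$.

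For step~(ii) I first verify that $\Omega(\tau;p) = \{(2,3,12)\}$: if $(\gamma_1,\gamma_2,\gamma_3) \in T(\tau;p)$ had some $\gamma_i$ of order strictly less than $2,3,12$ respectively, then $\langle \gamma_1,\gamma_2,\gamma_3 \rangle$ would be a quotient of a triangle group $\overline{\Delta}(a',b',c')$ with $a'|2$, $b'|3$, $c'|12$ and $(a',b',c') \ne (2,3,12)$, hence of a spherical triangle group of order at most $24$ or the Euclidean group $\overline{\Delta}(2,3,6)$, which is infinite solvable; neither can coincide with $K(\tau;p)$ for $p \geq 5$.  By Lemma~\ref{lem:eigenvalues}, $\kappa(2,q) = \kappa(3,q) = 1$ and $\kappa(12,q) = \varphi(24)/4 = 2$, yielding two potential trace triples up to sign, $(0, \pm 1, \pm 2\cos(\pi/12))$ and $(0, \pm 1, \pm 2\cos(5\pi/12))$, reduced modulo $\mathfrak{P}$.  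By Lemma~\ref{lem:commutative} and the identities $4\cos^2(\pi/12) = 2 + \sqrt{3}$ and $4\cos^2(5\pi/12) = 2 - \sqrt{3}$, no lift is commutative (the commutativity quantity reduces to $\pm \sqrt{3} - 1 \not\equiv 0 \,\mathrm{mod}\, \mathfrak{P}$ for $p \ne 2$), and no lift is exceptional since $(2,3,12)$ is not on Macbeath's list.  Hence every trace triple is projective by~\cite[Theorem~4]{Macbeath/69}, and a fortiori partly projective.  The field-generation hypothesis $\F_q = \F_p(\tr\underline{C})$ is checked case by case against Lemma~\ref{lem:quotient2312}, and Proposition~\ref{pro:counting.normal.subgroups} then gives $|\mathcal{N}(\tau;p)| \leq 2$.

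For step~(iii), when $p \equiv \pm 1 \,\mathrm{mod}\, 12$ the two primes above $p$ already saturate the bound.  When $p \equiv \pm 5 \,\mathrm{mod}\, 12$ I sharpen $|\mathcal{N}(\tau;p)| \leq 1$, exactly as in the Hurwitz case: the Frobenius automorphism $\sigma: \alpha \mapsto \alpha^p$ of $\F_{p^2}/\F_p$ acts on $K(\tau;p) = \mathrm{PSL}_2(\F_{p^2})$.  Since $\sigma$ negates $\sqrt{3} \,\mathrm{mod}\, \p$ when $p$ is inert in $\Q(\sqrt{3})$, and the residues of $2\cos(\pi/12)$ and $2\cos(5\pi/12)$ satisfy $x^2 = 2 + \sqrt{3}$ and $y^2 = 2 - \sqrt{3}$ in $F_\tau$ respectively, $\sigma$ interchanges these residues up to sign and hence swaps the two order-$12$ conjugacy classes of $K(\tau;p)$.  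Consequently a single $\langle \sigma \rangle$-orbit meets $\Sigma(\underline{C}) \cap T(\tau;p)$ for both $\underline{C} \in \mathcal{C}(\tau;p)$, so $|\mathcal{N}(\tau;p)| = 1$ as required.  The main obstacle is the uniform verification of the hypotheses of Proposition~\ref{pro:counting.normal.subgroups}, particularly the field-generation condition across the four residue classes mod $24$ (notably in the $\mathrm{PGL}_2(\F_p)$ subcase where $K(\tau;p)$ is only realized inside $\mathrm{PSL}_2(\F_{p^2})$ via the square-root-of-determinant map), together with a brief verification that no anomaly occurs in $\Omega(\tau;p)$ for the smallest primes $p \in \{5,7,11,13\}$.
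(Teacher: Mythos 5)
Your proposal follows essentially the same route as the paper's proof: mirror the Hurwitz argument, show $\Omega(\tau;p)=\{(2,3,12)\}$ via solvability of the smaller triangle-group quotients, apply Lemma~\ref{lem:eigenvalues}, Lemma~\ref{lem:commutative}, and Proposition~\ref{pro:counting.normal.subgroups} to get $|\mathcal{N}(\tau;p)|\leq 2$, and use the Frobenius swap of the two order-$12$ trace classes to sharpen this to $1$ in the inert case. Your explicit computation of the commutativity quantity $\pm\sqrt{3}-1$ and your attention to the field-generation hypothesis in the $\mathrm{PGL}_2$ subcase are slightly more detailed than the paper's exposition, but the argument is the same.
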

\begin{proof}
We will show that there exist at most two normal subgroups $H \vartriangleleft \overline{\Delta}$ satisfying $\overline{\Delta} / H \simeq K(\tau;p)$ if $p$ splits in $\Q(\sqrt{3})$ and at most one such subgroup $H$ if $p$ is inert in $\Q(\sqrt{3})$.  Since the congruence subgroups $\overline{\Delta}(\p)$ satisfy $\overline{\Delta} / \overline{\Delta}(\p) \simeq K(\tau;p)$ by definition, it follows that no other subgroups satisfy this condition.

Following the strategy of the proof of Proposition~\ref{pro:hurwitz.normal.subgroups}, consider $(\gamma_1, \gamma_2, \gamma_3) \in T(\tau;p)$.  
Clearly the orders of $\gamma_1, \gamma_2, \gamma_3$ divide $2$, $3$, and $12$, respectively.  If one of these elements were trivial, 
the three of them would generate a cyclic group, contradicting the assumption that they generate $K(\tau;p)$.  Hence $o(\gamma_1) = 2$ and $o(\gamma_2) = 3$.  If $o(\gamma_3) < 12$, then the non-solvable group $K(\tau;p)$ would be a homomorphic image of $\overline{\Delta}(2,2,3)$, $\overline{\Delta}(2,3,4)$, or $\overline{\Delta}(2,3,6)$, which is absurd since these three groups are solvable; see~\cite[Example~2.4]{CV/19} and the paragraph following it.  Hence $o(\gamma_3) = 12$.  We have shown that $\Omega(\tau;p) = \{ (2,3,12) \}$.

By Lemma~\ref{lem:eigenvalues}, there are two conjugacy class triples $\underline{C}$ satisfying $o(\underline{C}) = (2,3,12)$; the corresponding trace triples up to signs are $(0, \pm 1, \pm(\zeta_{24} + \zeta_{24}^{-1}))$ and $(0, \pm 1, \pm(\zeta_{24}^5 + \zeta_{24}^{-5}))$, where $\zeta_{24}$ is a primitive $24$-th root of unity.  These are not commutative, by Lemma~\ref{lem:commutative}, and are not exceptional.  It now follows from Proposition~\ref{pro:counting.normal.subgroups} that $| \mathcal{N}(\tau;p) | \leq 2$.

It remains to show that if $p$ is inert in $\Q(\sqrt{3})$, i.e.~if $p \equiv \pm 5 \, \mathrm{mod} \, 12$, then $| \mathcal{N}(\tau;p)| = 1$.  In this case, the Frobenius automorphism $\sigma \in \mathrm{Gal}(\F_{p^2} / \F_p)$ interchanges $\pm (\zeta_{24} + \zeta_{24}^{-1})$ and $\pm (\zeta_{24}^5 + \zeta_{24}^{-5})$, and we proceed by the same argument as for the analogous case of Proposition~\ref{pro:hurwitz.normal.subgroups}.
\end{proof}

Observe that $\Delta(2,3,12)^{(2)} \simeq \Delta(3,3,6)$.  
We read off from Proposition~\ref{pro:presentation.a} that $A_\tau \simeq \left \langle \frac{-3, 1 + \sqrt{3}}{\Q(\sqrt{3})} \right \rangle$ and that a $\Z[\sqrt{3}]$-basis of the order $\mathcal{Q}_\tau$ is given by $\{ 1, \gamma_1, \gamma_2, \gamma_1 \gamma_2 \}$, where
\begin{eqnarray*}
\gamma_1 & = & \frac{-1 + i}{2} \\
\gamma_2 & = & \frac{\sqrt{3}}{2} + \frac{2 + \sqrt{3}}{6} i + \frac{ij}{3}.
\end{eqnarray*}
Moreover, $\mathcal{Q}_\tau^1 / \{ \pm 1 \} \simeq \overline{\Delta}(3,3,6)$; see the table on p.~208 of~\cite{Takeuchi/77JFS}.  However, $\overline{\Delta}(2,3,12)$ is isomorphic to the larger group $\mathcal{Q}_\tau^+ / (E_\tau^\times \cap \mathcal{Q}_\tau^+)$, where $\mathcal{Q}_\tau^+ \leq \mathcal{Q}_\tau^\times$ is the subgroup of elements with totally positive reduced norm.  Then $\mathcal{Q}_\tau^+ / (E_\tau^\times \cap \mathcal{Q}_\tau^+)$ naturally contains $\mathcal{Q}_\tau^1 / \{ \pm 1 \}$ as a subgroup of index two.

We tabulate the results of our computations for all prime ideals $\p \vartriangleleft \Z[\sqrt{3}]$ such that $N(\p) < 60$.  Note that if $p \in \{ 2, 3 \}$, then $p$ ramifies in $\Q(\sqrt{3})$ and also satisfies $p | abc$, so that Proposition~\ref{pro:2312.normal.subgroups} is inapplicable to primes dividing $p$.  However, it is still possible to define congruence subgroups; see~\cite[Remark~5.24]{CV/19}.  If $\p_2$ is the prime ideal dividing $2$, one finds explicitly that $\{ g \in \mathcal{Q}_\tau^+ : g \equiv 1 \, \mathrm{mod} \, \p_2 \mathcal{Q}_\tau \} \trianglelefteq \mathcal{Q}_\tau^+$ corresponds to a normal subgroup $H_2 \trianglelefteq \overline{\Delta}$ such that the quotient $\overline{\Delta}/H_2$ is a group of order $48$ isomorphic to the unique non-trivial extension of $A_4$ by $\Z / 4\Z$.  This $H$ is isomorphic to a surface group of genus $3$.  Similarly, for the prime ideal dividing $3$ one obtains a normal subgroup $H_3 \trianglelefteq \overline{\Delta}$ such that $\overline{\Delta} / H_3 \simeq S_4 \times \Z / 3\Z$ and $H_3$ is isomorphic to a surface group of genus $4$.  
The resulting Riemann surfaces of genera $3$ and $4$ are the maximal surfaces mentioned at the beginning of this section.  Their systolic lengths were computed by Schmutz Schaller~\cite{Schmutz/93} and match our estimates.

One sees by explicit computation that $\mathcal{Q}_\tau$ is a maximal order in $A_\tau$, and that the only finite place at which $A_\tau$ ramifies is $\p_2$.  It follows that both sides of the inequality in the hypotheses of~\cite[Proposition~9.1]{KKSV/16} are equal to $12$, and we are unable to conclude that $\mathrm{sys}(X_{\overline{\Delta}(\p)}) > \frac{4}{3} g(X_{\overline{\Delta}(\p)})$ for any $\p$; this depends on the coefficient of the second-order term of the series appearing in the proof of~\cite[Proposition~9.1]{KKSV/16}.  Nevertheless, from our computational data this appears plausible.

\begin{center}
\begin{longtable}{| r | r || r | r || r | r |} \hline
$\p$ & $N(\p)$ & $x =$ lowest observed trace & $2 \arcosh \frac{x}{2}$ & $g(X_{\overline{\Delta}(\p)})$ & $\frac{4}{3} \log g$ \\ \hline \hline
$(1 + \sqrt{3})$ & $2$ & $ 4 + 2\sqrt{3}$ & $3.983$ & $3$ & $1.465$ \\ \hline
$(\sqrt{3})$ & $3$ & $5 + 3 \sqrt{3}$ & $4.624$ & $4$ & $1.848$ \\ \hline
$(1 - 2 \sqrt{3})$ & $11$ & $31 + 19 \sqrt{3}$ & $8.314$ & $56$ & $5.367$ \\ \hline
$(1 + 2 \sqrt{3})$ & $11$ & $36 + 21 \sqrt{3}$ & $8.563$ & $56$ & $5.367$ \\ \hline
$(4 - \sqrt{3})$ & $13$ & $35 + 20 \sqrt{3}$ & $8.486$ & $92$ & $6.029$ \\ \hline
$(4 + \sqrt{3})$ & $13$ & $45 + 27 \sqrt{3}$ & $9.038$ & $92$ & $6.029$ \\ \hline
$(2 - 3 \sqrt{3})$ & $23$ & $71 + 40 \sqrt{3}$ & $9.887$ & $254$ & $7.383$ \\ \hline
$(2 + 3 \sqrt{3})$ & $23$ & $96 + 55 \sqrt{3}$ & $10.507$ & $254$ & $7.383$ \\ \hline
$(5)$ & $25$ & $(168 + 94\sqrt{3})\cos \frac{\pi}{12}$ & $11.534$ & $326$ & $7.716$ \\ \hline
$(7 + 2 \sqrt{3})$ & $37$ & $204 + 117 \sqrt{3}$ & $12.016$ & $2110$ & $10.206$ \\ \hline
$(7 - 2 \sqrt{3})$ & $37$ & $324 + 187 \sqrt{3}$ & $12.947$ & $2110$ & $10.206$ \\ \hline
$(1 + 4\sqrt{3})$ & $47$ & $(282 + 164 \sqrt{3})\cos \frac{\pi}{12}$ & $12.608$ & $2163$ & $10.239$ \\ \hline
$(1 - 4 \sqrt{3})$ & $47$ & $(378 + 222 \sqrt{3}) \cos \frac{\pi}{12}$ & $13.204$ & $2163$ & $10.239$ \\ \hline
$(7)$ & $49$ & $341 + 196 \sqrt{3}$ & $13.046$ & $2451$ & $10.406$ \\ \hline
$(4 - 5\sqrt{3})$ & $59$ & $564 + 325 \sqrt{3}$ & $14.054$ & $8556$ & $12.073$ \\ \hline
$(4 + 5\sqrt{3})$ & $59$ & $1115 + 644 \sqrt{3}$ & $15.420$ & $8556$ & $12.073$ \\ \hline
\end{longtable}
\end{center}

\subsection{Bolza twins} \label{sec:bolza}
Consider $\tau = (2,3,8)$.  Then $F_\tau = \Q \left( \cos \frac{\pi}{8} \right)$ and $E_\tau = \Q(\sqrt{2})$.
From Proposition~\ref{pro:presentation.a} we find that $A_\tau = \left\langle \frac{-3, \sqrt{2}}{\Q(\sqrt{2})} \right\rangle$ and that the order $\mathcal{Q}_\tau \subset A_\tau$ is spanned over $\Z[\sqrt{2}]$ by $\{1, \gamma_1, \gamma_2, \gamma_1 \gamma_2 \}$, where
$\gamma_1 = \frac{1}{2} (-1 + i)$ and $\gamma_2 = \frac{\sqrt{2}}{2} + \frac{2 + \sqrt{2}}{6} i + \frac{1}{3} ij$.  It is easy to see that $\mathcal{Q}_\tau$ is the ``Bolza order'' defined in~\cite[\S7]{KKSV/16}; this follows from noting that $\gamma_1 = \alpha - 1$ and $\gamma_2 = (1 + \sqrt{2})\alpha - \beta$, where $\alpha$ and $\beta$ are as in~\cite{KKSV/16}.
Moreover $\mathcal{Q}_\tau^1 / \{ \pm 1 \} \simeq \overline{\Delta}^{(2)}(\tau) \simeq \overline{\Delta}(3,3,4)$~\cite[\S8]{KKSV/16}.  Katz, Katz, the first author, and Vishne~\cite[\S14-16]{KKSV/16} previously computed bounds on $\mathrm{sys}(X_{\overline{\Delta}(\tau^{(2)};\p)})$, using the method of Section~\ref{sec:procedure}, for prime ideals $\p \vartriangleleft \Z[\sqrt{2}]$ dividing a rational prime $p$ that splits in $\Q(\sqrt{2})$.
It is an exercise in algebraic number theory to deduce the following from Proposition~\ref{pro:CV.congruence.quotient}.

\begin{pro} \label{pro:bolza}
Let $\tau = (2,3,8)$ and $\tau^{(2)} = (3,3,4)$.  Let $\p$ be a prime ideal of $\Z[\sqrt{2}]$ dividing the rational prime $p \geq 5$.  Then
\begin{eqnarray*}
\overline{\Delta}(\tau)/\overline{\Delta}(\tau;\p) & \simeq & \begin{cases}
\mathrm{PSL}_2(\F_p) &: p \equiv \pm 1, \, \mathrm{mod} \, 16 \\
\mathrm{PGL}_2(\F_p) &: p \equiv \pm 7, \, \mathrm{mod} \, 16 \\
\mathrm{PGL}_2(\F_{p^2}) &: p \equiv \pm 3, \pm 5 \, \mathrm{mod} \, 16
\end{cases} \\
\overline{\Delta}(\tau^{(2)}) / \overline{\Delta}(\tau^{(2)};\p) & \simeq & \begin{cases}
\mathrm{PSL}_2(\F_p) &: p \equiv \pm 1, \mathrm{mod} \, 8 \\
\mathrm{PSL}_2(\F_{p^2}) &: p \equiv \pm 3, \mathrm{mod} \, 8.
\end{cases}
\end{eqnarray*}
\end{pro}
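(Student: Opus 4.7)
The plan is to apply Proposition~\ref{pro:CV.congruence.quotient} to the two triangle groups $\overline{\Delta}(\tau)$ and $\overline{\Delta}(\tau^{(2)})$ in turn; the entire argument reduces to identifying the trace fields $F$, invariant trace fields $E$, and the splitting behaviour of $\p$ in the quadratic extension $F/E$.

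The case of $\tau^{(2)} = (3,3,4)$ is nearly immediate. Direct evaluation with $\cos\frac{\pi}{3} = \frac{1}{2}$ and $\cos\frac{\pi}{4} = \frac{\sqrt{2}}{2}$ gives $F_{\tau^{(2)}} = E_{\tau^{(2)}} = \Q(\sqrt{2})$, so the extension $F/E$ is trivial and every prime $\p$ splits vacuously. Proposition~\ref{pro:CV.congruence.quotient} then yields $\overline{\Delta}(\tau^{(2)})/\overline{\Delta}(\tau^{(2)};\p) \simeq \mathrm{PSL}_2(k_\p)$, with residue field $k_\p \simeq \F_p$ when $p$ splits in $\Q(\sqrt{2})$ (i.e.~$p \equiv \pm 1 \pmod 8$) and $k_\p \simeq \F_{p^2}$ otherwise (i.e.~$p \equiv \pm 3 \pmod 8$), which is the second assertion.

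For $\tau = (2,3,8)$, I would use that $E_\tau = \Q(\sqrt{2})$ and $F_\tau = \Q(2\cos\frac{\pi}{8}) = \Q(\zeta_{16}+\zeta_{16}^{-1})$, the maximal totally real subfield of $\Q(\zeta_{16})$. Under the standard identification $\mathrm{Gal}(\Q(\zeta_{16})/\Q) \cong (\Z/16\Z)^{\times}$ with $\sigma_a(\zeta_{16}) = \zeta_{16}^a$, one verifies that $\mathrm{Gal}(\Q(\zeta_{16})/F_\tau) = \{\pm 1\}$ (complex conjugation), while $\mathrm{Gal}(\Q(\zeta_{16})/E_\tau) = \{\pm 1, \pm 7\}$ consists of the elements fixing $\sqrt{2} = \zeta_{16}^2 + \zeta_{16}^{-2}$, namely those $a$ with $a \equiv \pm 1 \pmod 8$. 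Consequently $\mathrm{Gal}(F_\tau/E_\tau)$ is cyclic of order $2$, generated by the image of $\sigma_7$. For any $p \geq 5$ the prime $p$ is unramified in $\Q(\zeta_{16})$, so the Frobenius at any prime above $p$ is $\sigma_p$, and a direct case analysis modulo $16$ gives: $p$ splits in $E_\tau/\Q$ iff $p \equiv \pm 1, \pm 7 \pmod{16}$; the image of $\sigma_p$ in $\mathrm{Gal}(F_\tau/E_\tau)$ is trivial iff $p \equiv \pm 1 \pmod{16}$; and when $p \equiv \pm 3, \pm 5 \pmod{16}$ the element $\sigma_p$ has order $4$ in $(\Z/16\Z)^{\times}/\{\pm 1\}$, so $p$ is inert in $F_\tau/\Q$ with residue degree $4$, forcing the unique prime $\p$ above $p$ to be inert in $F_\tau/E_\tau$.

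Feeding these three observations into Proposition~\ref{pro:CV.congruence.quotient}, together with the identification of $k_\p$ as $\F_p$ when $p$ is split in $E_\tau/\Q$ and $\F_{p^2}$ when it is inert, yields the three cases of the first assertion. The only potential obstacle is bookkeeping in the Galois-correspondence step: one must keep clearly separate the two independent splitting questions (in $E_\tau/\Q$ and in $F_\tau/E_\tau$) and identify the corresponding subgroups of $(\Z/16\Z)^{\times}$ and their quotients modulo $\{\pm 1\}$ correctly.
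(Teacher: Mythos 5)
Your argument is correct and is exactly the route the paper intends: the paper offers no written proof, stating only that the result is ``an exercise in algebraic number theory'' deduced from Proposition~\ref{pro:CV.congruence.quotient}, and your computation of $F_\tau$, $E_\tau$, and the splitting behaviour of $p$ via the Galois correspondence inside $\Q(\zeta_{16})$ carries out that exercise correctly, including the degenerate case $F_{\tau^{(2)}} = E_{\tau^{(2)}} = \Q(\sqrt{2})$ for $(3,3,4)$.
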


Let $\p \vartriangleleft \Z[\sqrt{2}]$ be a prime ideal dividing $p \geq 5$.
Since $\overline{\Delta}(\tau^{(2)}; \p) = \overline{\Delta}(\tau;\p) \cap \overline{\Delta}^{(2)}(\tau)$, it follows by Proposition~\ref{pro:bolza} that $\overline{\Delta}(\tau^{(2)}; \p) = \overline{\Delta}(\tau;\p)$ exactly when $p \not\equiv \pm 1 \, \mathrm{mod} \, 16$.  Thus for half of the primes $p$ splitting in $\Q(\sqrt{2})$ we have $X_{\overline{\Delta}(\tau;\p)} = X_{\overline{\Delta}(\tau^{(2)};\p)}$, whereas for half of them $X_{\overline{\Delta}(\tau^{(2)};\p)}$ is a non-trivial cover of $X_{\overline{\Delta}(\tau;\p)}$ and may have larger systolic length.

The prime $\p_2 = (\sqrt{2})$ does not fit into the framework of this paper.  However, analogously to the previous section, it gives rise to a normal subgroup $H_2 \trianglelefteq \overline{\Delta}(\tau^{(2)})$ of index $24$ such that $\overline{\Delta}(\tau^{(2)}) / H_2 \simeq \mathrm{SL}_2(\F_3)$ and such that $H_2$ is isomorphic to a surface group of genus two.  The Riemann surface $X_{H_2}$ is the Bolza surface, which is known to be maximal.  The four generators in our presentation of $H_2$ all correspond to elements of $\mathcal{Q}_\tau^1 / \{ \pm 1 \}$ of trace $\pm 2 (1 + \sqrt{2})$, which give the true systolic length of the Bolza surface~\cite[\S5]{Schmutz/93}.  We observe in passing two other interesting normal subgroups $H_3, H_5 \trianglelefteq \overline{\Delta}(\tau^{(2)})$ of indices $48$ and $96$.  They are isomorphic to surface groups of genera $3$ and $5$, respectively, and neither appears to be a congruence subgroup.  We have $\overline{\Delta}(\tau^{(2)})/H_3 \simeq (\Z / 4\Z)^2 \rtimes \Z / 3\Z$, with $\Z / 3\Z$ acting faithfully, and the generators of $H_3$ correspond to traces $\pm 2 (2 + \sqrt{2})$.  Finally, $\overline{\Delta}(\tau^{(2)})/H_5 \simeq (\Z / 2\Z)^3 \rtimes A_4$, with $A_4$ acting via its quotient $\Z / 3\Z$.  The generators of $H_5$ correspond to traces $\pm 2(3 + 2 \sqrt{2})$.  Note that Schmutz Schaller~\cite[\S7]{Schmutz/93} discusses a maximal surface of genus $5$ and systolic length $2 \arcosh (3 + 2 \sqrt{2})$.

\subsection{$(2,7,7)$ triangle surfaces} \label{sec:277}
The case of $\tau = (2,7,7)$ is interesting for at least two reasons.  First of all, the analogue of Propositions~\ref{pro:hurwitz.normal.subgroups} and~\ref{pro:2312.normal.subgroups} fails.  Secondly, $\overline{\Delta}(2,7,7)$ is not maximal among triangle groups, since it embeds in $\overline{\Delta}(2,3,7)$ as a self-normalizing subgroup of index nine~\cite[Theorem~2]{Singerman/72}.  An explicit embedding is given by
\begin{eqnarray*}
f: \overline{\Delta}(2,7,7) = \langle x^\prime ,y^\prime \rangle / \langle (x^\prime)^2, (y^\prime)^7, (x^\prime y^\prime)^7 = 1 \rangle & \to & \langle x, y \rangle / \langle x^2, y^3, (xy)^7 \rangle = \overline{\Delta}(2,3,7)\\
x^\prime & \mapsto & x \\
y^\prime & \mapsto & (xy)^4 y,
\end{eqnarray*}
and its image is unique up to conjugation.
In this situation, $E_\tau = F_\tau = \Q(\cos \frac{\pi}{7})$ is the same field as in Section~\ref{sec:hurwitz}.  Recall that we set $\mu = 2 \cos \frac{\pi}{7}$.  The presentation of the quaternion algebra $B_\tau = A_\tau$ given by Proposition~\ref{pro:alpha.beta.fulldelta} is $\left \langle  \frac{-4, 2 \mu^2 - 4}{\Q(\mu)} \right\rangle$.  This is actually the same algebra as in Section~\ref{sec:hurwitz}, as evidenced by the isomorphism
\begin{eqnarray*}
\varphi: \left \langle  \frac{-4, 2 \mu^2 - 4}{\Q(\mu)} \right\rangle & \to & \left \langle  \frac{-4, \mu^2 - 3}{\Q(\mu)} \right\rangle \\
\varphi(i) & = & i \\
\varphi(j) & = & (\mu^2 - 1) \left( -j + \frac{ij}{2} \right).
\end{eqnarray*}
We work with the presentation of Section~\ref{sec:hurwitz}.  The (image under $\varphi$ of) the order $\mathcal{O}_{(2,7,7)}$ is the $\Z[\mu]$-span of $\{1, \alpha, \beta^\prime, \alpha \beta^\prime \}$, for $\alpha = \frac{i}{2}$ and $\beta^\prime = \frac{1}{4} \left( 2 \mu + \mu i + 2(\mu^2 - 1)j + (\mu^2 - 1)ij \right)$.  It is easy to check that $\mathcal{O}_{(2,7,7)} \subset \mathcal{O}_{(2,3,7)}$ and that $\mathcal{O}_{(2,7,7)}$ has discriminant $(8)$.  We have $\mathcal{O}^1_{(2,7,7)} / \{ \pm 1 \} \simeq \overline{\Delta}(2,7,7)$.  If $\tau$ is either $(2,3,7)$ or $(2,7,7)$, it follows from the construction in~\cite[\S5]{CV/19} that $\overline{\Delta}(\tau;\p) = \{ g \in \mathcal{O}_\tau^1 : g \equiv \pm 1 \, \mathrm{mod} \, \p \mathcal{O}_\tau \} / \{ \pm 1 \}$.  Hence
$$\overline{\Delta}(2,7,7;\p) = \overline{\Delta}(2,3,7;\p) \cap \overline{\Delta}(2,7,7)$$
for all prime ideals $\p \vartriangleleft \Z[\mu]$ not dividing $2$.  In this case, $X_{\overline{\Delta}(2,7,7;\p)}$ is a cover of degree nine of $X_{\overline{\Delta}(2,3,7,\p)}$, and thus the length spectrum of $X_{\overline{\Delta}(2,7,7;\p)}$ is a subset of that of $X_{\overline{\Delta}(2,3,7;\p)}$.  The following table collects our estimates for $\mathrm{sys}(X_{\overline{\Delta}(2,7,7;\p)})$ for some primes $\p$.  The last column gives the rank of our candidate for $\mathrm{sys}(X_{\overline{\Delta}(2,7,7;\p)})$ in the length spectrum of $X_{\overline{\Delta}(2,3,7;\p)}$, as computed by Vogeler~\cite{Vogeler/03}.  Where the entry in this column is $1$, we have certainly obtained the true value of $\mathrm{sys}(X_{\overline{\Delta}(2,7,7;\p)})$.

\begin{center}
\begin{longtable}{| r | r || r | r || r | r |} \hline
$I$ & $N(I)$ & $x =$ lowest observed trace & $2 \arcosh \frac{x}{2}$ & $g(X_{\overline{\Delta}(2,7,7;I)})$ &  \\ \hline \hline
$(\mu + 2)$ & $7$ & $ 9 \mu^2 + 8 \mu - 4$ & $7.358$ & $19$ & $3$ \\ \hline
$(2)$ & $8$ & $4 \mu^2 + 4 \mu - 2$ & $5.796$ & $49$ & $1$ \\ \hline
$(2 \mu + 1)$ & $13$ & $17 \mu^2 + 12 \mu - 10$ & $8.404$ & $118$ & $2$ \\ \hline
$(2 \mu + 3)$ & $13$ & $6 \mu^2 + 5 \mu - 4$ & $6.393$ & $118$ & $1$ \\ \hline
$(\mu - 3)$ & $13$ & $8 \mu^2 + 7 \mu - 4$ & $7.085$ & $118$ & $2$ \\ \hline
$(3)$ & $27$ & $135 \mu^2 + 108 \mu - 74$ & $12.652$ & $1054$ & $3$ \\ \hline
$(4 \mu - 1)$ & $29$ & $223 \mu^2 + 180 \mu - 124$ & $13.658$ & $1306$ & $4$ \\ \hline
$(3 \mu - 4)$ & $29$ & $49 \mu^2 + 41 \mu - 27$ & $10.656$ & $1306$ & $1$ \\ \hline
$(\mu + 3)$ & $29$ & $73 \mu^2 + 60 \mu - 40$ & $11.442$ & $1306$ & $1$ \\ \hline
\end{longtable}
\end{center}

The case of $\p = (2)$, which divides the discriminant of $\mathcal{O}_{(2,7,7)}$, is exceptional.  The congruence subgroup $\overline{\Delta}(2,3,7;(2))$ corresponding to the Fricke-Macbeath curve of genus $7$ is contained in (any embedding of) $\overline{\Delta}(2,7,7)$ into $\overline{\Delta}(2,3,7)$.  Thus the Fricke-Macbeath curve is a $(2,7,7)$-triangle surface.  However, 
$$\overline{\Delta}(2,7,7;(2)) = \{ g \in \mathcal{O}^1_{(2,7,7)} : g \equiv \pm 1 \, \mathrm{mod} \, 2 \mathcal{O}_{(2,7,7)} \}$$ 
is a subgroup of index eight in $\overline{\Delta}(2,3,7;(2))$.  Thus $X_{\overline{\Delta}(2,7,7;(2))}$ is a cover of degree $8$ of the Fricke-Macbeath curve, although it has the same systolic length.

It follows from Proposition~\ref{pro:counting.normal.subgroups}, by an argument that we have already seen in several previous examples, that $| \mathcal{N}(\tau;p) | \leq 9$ for all primes $p \not\in \{ 2, 7 \}$.  The smallest primes that split in $\Q(\mu)$ are $13$ and $29$.  For either $p \in \{13, 29 \}$, we find nine normal subgroups $H \trianglelefteq \overline{\Delta}(2,7,7)$ such that $\overline{\Delta}(2,7,7)/H \simeq \mathrm{PSL}_2(\F_p)$, corresponding to the nine triples $\underline{C} = (C_1, C_2, C_3)$ of conjugacy classes of $\mathrm{PSL}_2(\F_p)$ with $o(\underline{C}) = (2,7,7)$; note that there is only one possibility for $C_1$, but three for $C_2$ and $C_3$.  In each case, the three subgroups associated to triples $\underline{C}$ with $C_2 = C_3$ are the congruence subgroups arising from prime ideals dividing $p$.  The six remaining normal subgroups in each case are also isomorphic to surface groups of genera $118$ or $1306$.  They are not all conjugate in $\mathrm{PSL}_2(\R)$; note that Theorems~5 and~6 of~\cite{GW/05} tell us where to look for conjugating elements.  However, for all twelve of these groups, the lowest observed trace is $11 \mu^2 + 9 \mu - 7$, corresponding to a systolic length of $7.609\dots$  This is precisely $\mathrm{sys}(X_S)$, where $S \trianglelefteq \overline{\Delta}(2,3,7)$ is either of Sinkov's non-congruence normal subgroups of index $1344$.  Similarly, there are three normal subgroups of $\overline{\Delta}(2,7,7)$ with quotient isomorphic to $\mathrm{PSL}_2(\F_{27})$.  One of them is the congruence subgroup $\overline{\Delta}(2,7,7;(3))$.  The other two are surface groups of genus $1054$, and the lowest observed trace is again $11 \mu^2 + 9 \mu - 7$.  There are two non-congruence normal subgroups $H \trianglelefteq \overline{\Delta}(2,7,7)$ with $\overline{\Delta}(2,7,7)/H \simeq \mathrm{PSL}(\F_8)$, whose lowest observed trace is the same mysterious $11 \mu^2 + 9 \mu - 7$.  While the pairwise intersections of all these groups with the same lowest observed trace are small, a very disproportionate number of the generators we use lie in these intersections.  This may account for our computations.  We hope to return to this interesting phenomenon in future work.

\subsection{$(3,3,10)$ triangle surfaces}
Finally, consider the triple $\tau = (3,3,10)$.  By~\cite[Theorem~3]{Takeuchi/77}, the triangle group $\overline{\Delta} = \overline{\Delta}(\tau)$ is not arithmetic.  We have $F_\tau = E_\tau = \Q (\nu)$, where $\nu = 2 \cos \frac{\pi}{10}$ has minimal polynomial $x^4 - 5x^2 + 5$ over $\Q$.  This number field has class number $1$.  The quaternion algebra $B_\tau = A_\tau = \left\langle \frac{-3, \nu^2 + \nu - 2}{F_\tau} \right\rangle$ splits at two of the four infinite places of $F_\tau$ and by Proposition~\ref{pro:alpha.beta.fulldelta} the order $\mathcal{O}_\tau \subset B_\tau$ is spanned over $\mathcal{O}_{F_\tau}$ by $\{ 1, \alpha, \beta, \alpha \beta \}$, where $\alpha = \frac{1}{2} + \frac{i}{2}$ and $\beta = \frac{1}{2} + \frac{1 + 2 \nu}{6} i - \frac{1}{3} ij$.  By Theorem~\ref{thm:semiarithmetic} we have $\mathrm{sys}(X_{\overline{\Delta}(I)}) > \frac{2}{3} \log g(X_{\overline{\Delta}(I)}) - c$ for all ideals $I \triangleleft \mathcal{O}_{F_\tau}$ coprime to $30$.

\begin{lemma}
Let $\tau = (3,3,10)$ and let $p \geq 7$ be a rational prime.  Then
$$ K(\tau;p) \simeq \begin{cases}
\mathrm{PSL}_2(\F_p) &: p \equiv 1, 19 \, \mathrm{mod} \, 20 \\
\mathrm{PSL}_2(\F_{p^2}) &: p \equiv 9, 11 \, \mathrm{mod} \, 20 \\
\mathrm{PSL}_2(\F_{p^4}) &: p \equiv 3, 7, 13, 17 \, \mathrm{mod} \, 20.
\end{cases}$$
\end{lemma}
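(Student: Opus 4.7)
The plan is to apply Proposition~\ref{pro:CV.congruence.quotient}, using the crucial fact that $E_\tau = F_\tau$ for this triple, and then to identify the residue field by computing the splitting of rational primes in $F_\tau/\Q$. The hypothesis $p \geq 7$ ensures that $p$ is coprime to $2abc = 180$; in view of Proposition~\ref{pro:discriminant}, this guarantees both that $\overline{\Delta}(\tau;\p)$ is defined for every prime $\p \vartriangleleft \mathcal{O}_{F_\tau}$ above $p$ and that the quoted proposition applies.

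First I would record the key simplification: $E_\tau = F_\tau = \Q(\nu)$, as already noted at the start of the subsection and consistent with the observation that $\Delta(\tau)/\Delta^{(2)}(\tau)$ is trivial when two of $a,b,c$ are odd. Thus $F_\tau/E_\tau$ is trivial, so every prime of $E_\tau$ vacuously splits completely in $F_\tau/E_\tau$, and Proposition~\ref{pro:CV.congruence.quotient} yields $K(\tau;p) \simeq \mathrm{PSL}_2(k_\p)$, where $k_\p = \mathcal{O}_{F_\tau}/\p$. In particular, the $\mathrm{PGL}_2$ alternative of that proposition never arises for $\tau = (3,3,10)$, which is why the statement lists only $\mathrm{PSL}_2$ quotients.

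The remaining task is to identify $k_\p$ by computing the residue degree of $p$ in $F_\tau/\Q$. Since $F_\tau = \Q(\zeta_{20} + \zeta_{20}^{-1})$ is the maximal totally real subfield of $\Q(\zeta_{20})$, it corresponds under the Galois correspondence to the subgroup $\{\pm 1\} \leq (\Z/20\Z)^\times$. Because $p \geq 7$ satisfies $p \nmid 20$, it is unramified in $\Q(\zeta_{20})$, so the residue degree of any prime of $F_\tau$ above $p$ equals the order of the image of $p$ in the abelian quotient $(\Z/20\Z)^\times/\{\pm 1\}$ of order four. A direct enumeration shows the coset $\{1,19\}$ has order $1$, the coset $\{9,11\}$ has order $2$, and the cosets $\{3,17\}$ and $\{7,13\}$ have order $4$ (since $3^2 \equiv 7^2 \equiv 9 \pmod{20}$ is not $\pm 1$). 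Sorting the residues of $p$ modulo $20$ into these four cosets yields precisely the three cases of the lemma.

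I do not anticipate any serious obstacle: the argument is essentially mechanical once one notices the collapse $E_\tau = F_\tau$, which eliminates the $\mathrm{PGL}_2$ case, and the only point requiring care is the small finite check of orders in $(\Z/20\Z)^\times/\{\pm 1\}$.
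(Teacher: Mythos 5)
Your proposal is correct and follows essentially the same route as the paper, which invokes Proposition~\ref{pro:CV.congruence.quotient} together with the same cyclotomic splitting argument used for $\Q(\cos\frac{\pi}{7}) \subset \Q(\zeta_7)$ in Lemma~\ref{lem:hurwitz.decomposition}, here applied to $\Q(\nu) \subset \Q(\zeta_{20})$ via the order of $p$ in $(\Z/20\Z)^\times/\{\pm 1\}$. The collapse $E_\tau = F_\tau$ eliminating the $\mathrm{PGL}_2$ case and the coset computation are exactly the intended reasoning.
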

\begin{proof}
The claim follows from Proposition~\ref{pro:CV.congruence.quotient} and reasoning similar to the proof of Lemma~\ref{lem:hurwitz.decomposition}.
\end{proof}

We tabulate our bounds on $\mathrm{sys}(X_{\overline{\Delta}(I)})$ for some prime ideals of $\mathcal{O}_{F_\tau}$.  It is evident from the table that there exist ideals for which $\mathrm{sys}(X_{\overline{\Delta}(I)}) < \frac{4}{3} \log g(X_{\overline{\Delta}(I)})$.
\begin{center}
\begin{longtable}{| r | r || r | r || r | r |} \hline
$\p$ & $N(\p)$ & $x =$ lowest observed trace & $2 \arcosh \frac{x}{2}$ & $g(X_{\overline{\Delta}(\p)})$ & $\frac{2}{3} \log g$ \\ \hline \hline
$(\nu^2 + \nu - 4)$ & $19$ & $21 + 12 \nu - 18 \nu^2 - 10 \nu^3$ & $9.002$ & $400$ & $3.994$ \\ \hline
$(\nu^2 - \nu - 4)$ & $19$ & $24 + 10 \nu - 20 \nu^2 - 10 \nu^3$ & $9.173$ & $400$ & $3.994$ \\ \hline
$(\nu^3 - \nu^2 - 3 \nu + 1)$ & $19$ & $40 + 18 \nu - 32 \nu^2 - 16 \nu^3$ & $10.043$ & $400$ & $3.994$ \\ \hline
$(\nu^3 + \nu^2 - 3 \nu - 1)$ & $19$ & $86 + 46 \nu - 64 \nu^2 - 34 \nu^3$ & $11.354$ & $400$ & $3.994$ \\ \hline
$(\nu^3 - 3 \nu - 3)$ & $41$ & $6 + 4 \nu - 7 \nu^2 - 4 \nu^3$ & $7.338$ & $4019$ & $5.533$ \\ \hline
$(\nu - 3)$ & $41$ & $36 + 18 \nu - 27 \nu^2 - 14 \nu^3$ & $9.637$ & $4019$ & $5.533$ \\ \hline
$(\nu^3 - 3 \nu + 3)$ & $41$ & $43 + 20 \nu - 32 \nu^2 - 16 \nu^3$ & $9.951$ & $4019$ & $5.533$ \\ \hline
$(\nu + 3)$ & $41$ & $243 + 125 \nu - 178 \nu^2 - 93 \nu^3$ & $13.377$ & $4019$ & $5.533$ \\ \hline
$(\nu^3 + \nu^2 - 2\nu - 4)$ & $59$ & $74 + 36\nu - 57 \nu^2 - 29 \nu^3$ & $11.147$ & $11978$ & $6.261$ \\ \hline
$(\nu^3 - \nu^2 - 4 \nu + 1)$ & $59$ & $90 + 45\nu - 66 \nu^2 - 34 \nu^3$ & $11.389$ & $11978$ & $6.261$ \\ \hline
$(\nu^3 + \nu^2 - 4\nu - 1)$ & $59$ & $301 + 156 \nu - 218 \nu^2 - 114 \nu^3$ & $13.766$ & $11978$ & $6.261$ \\ \hline
$(\nu^3 - \nu^2 - 2\nu + 4)$ & $59$ & $383 + 200 \nu - 278 \nu^2 - 146 \nu^3$ & $14.257$ & $11978$ & $6.261$ \\ \hline
$(3)$ & $81$ & $259 + 135 \nu - 189 \nu^2 - 99 \nu^3$ & $13.489$ & $30997$ & $6.894$ \\ \hline
\end{longtable}
\end{center}

\begin{acknowledgements}
We are grateful to Mikhail Katz and John Voight for helpful conversations and correspondence and to the anonymous referee for comments that improved the exposition.
\end{acknowledgements}

\bibliographystyle{amsplain}

\begin{thebibliography}{10}

\bibitem{BS/94}
P.~Buser and P.~Sarnak, \emph{On the period matrix of a {R}iemann surface of
  large genus}, Invent. Math. \textbf{117} (1994), 27--56, With an appendix by
  J. H. Conway and N. J. A. Sloane.

\bibitem{CV/19}
P.~L. Clark and J.~Voight, \emph{Algebraic curves uniformized by congruence
  subgroups of triangle groups}, Trans. Amer. Math. Soc. \textbf{371} (2019),
  33--82.

\bibitem{Cohen/79}
J.~M. Cohen, \emph{On {H}urwitz extensions by {${\rm PSL}_{2}(7)$}}, Math.
  Proc. Cambridge Philos. Soc. \textbf{86} (1979), 395--400.

\bibitem{CohenWolfart/90}
P.~Cohen and J.~Wolfart, \emph{Modular embeddings for some nonarithmetic
  {F}uchsian groups}, Acta Arith. \textbf{56} (1990), 93--110.

\bibitem{CosacDoria/20}
G.~Cosac and C.~D\'{o}ria, \emph{Closed geodesics on semi-arithmetic {R}iemann
  surfaces}, Preprint, arXiv:2004.13683, to appear in Math. Res. Lett.

\bibitem{DT/00}
R.~Derby-Talbot, \emph{Lengths of geodesics on {K}lein's quartic curve}, Tech.
  Report MSTR 00-03, Rose-Hulman Institute of Technology, 2000.

\bibitem{Elkies/99}
N.~D. Elkies, \emph{The {K}lein quartic in number theory}, The eightfold way,
  Math. Sci. Res. Inst. Publ., vol.~35, Cambridge Univ. Press, Cambridge, 1999,
  pp.~51--101.

\bibitem{Fricke/99}
R.~Fricke, \emph{Ueber eine einfache {G}ruppe von 504 {O}perationen}, Math.
  Ann. \textbf{52} (1899), 321--339.

\bibitem{GW/05}
E.~Girondo and J.~Wolfart, \emph{Conjugators of {F}uchsian groups and
  quasiplatonic surfaces}, Q. J. Math. \textbf{56} (2005), 525--540.

\bibitem{Hurwitz/93}
A.~Hurwitz, \emph{\"{U}ber algebraische {G}ebilde mit eindeutigen
  {T}ransformationen in sich}, Math. Annalen \textbf{41} (1893), 403--442.

\bibitem{KKSV/16}
K.~Katz, M.~Katz, M.~M. Schein, and U.~Vishne, \emph{Bolza quaternion order and
  asymptotics of systoles along congruence subgroups}, Exp. Math. \textbf{25}
  (2016), 399--415.

\bibitem{KSV/07}
M.~G. Katz, M.~Schaps, and U.~Vishne, \emph{Logarithmic growth of systole of
  arithmetic {R}iemann surfaces along congruence subgroups}, J. Differential
  Geom. \textbf{76} (2007), 399--422.

\bibitem{KSV/11}
\bysame, \emph{Hurwitz quaternion order and arithmetic {R}iemann surfaces},
  Geom. Dedicata \textbf{155} (2011), 151--161.

\bibitem{Leech/65}
J.~Leech, \emph{Generators for certain normal subgroups of {$(2,\,3,\,7)$}},
  Proc. Cambridge Philos. Soc. \textbf{61} (1965), 321--332.

\bibitem{LW/02}
R.~Lehman and C.~White, \emph{Hyperbolic billiard paths}, Tech. Report MSTR
  02-02, Rose-Hulman Institute of Technology, 2002.

\bibitem{LN/67}
J.~Lehner and M.~Newman, \emph{On {R}iemann surfaces with maximal automorphism
  groups}, Glasgow Math. J. \textbf{8} (1967), 102--112.

\bibitem{Macbeath/61}
A.~M. Macbeath, \emph{On a theorem of {H}urwitz}, Proc. Glasgow Math. Assoc.
  \textbf{5} (1961), 90--96.

\bibitem{Macbeath/65}
\bysame, \emph{On a curve of genus {$7$}}, Proc. London Math. Soc. (3)
  \textbf{15} (1965), 527--542.

\bibitem{Macbeath/69}
\bysame, \emph{Generators of the linear fractional groups}, Number {T}heory
  ({P}roc. {S}ympos. {P}ure {M}ath., {V}ol. {XII}, {H}ouston, {T}ex., 1967),
  Amer. Math. Soc., Providence, R.I., 1969, pp.~14--32.

\bibitem{MaclachlanReid/03}
C.~Maclachlan and A.~W. Reid, \emph{The arithmetic of hyperbolic 3-manifolds},
  Graduate Texts in Mathematics, vol. 219, Springer-Verlag, New York, 2003.

\bibitem{MKS/66}
W.~Magnus, A.~Karrass, and D.~Solitar, \emph{Combinatorial group theory:
  {P}resentations of groups in terms of generators and relations}, Interscience
  Publishers [John Wiley \& Sons, Inc.], New York-London-Sydney, 1966.

\bibitem{Makisumi/13}
S.~Makisumi, \emph{A note on {R}iemann surfaces of large systole}, J. Ramanujan
  Math. Soc. \textbf{28} (2013), 359--377.

\bibitem{Neubuser/82}
J.~Neub\"{u}ser, \emph{An elementary introduction to coset table methods in
  computational group theory}, Groups---{S}t. {A}ndrews 1981 ({S}t. {A}ndrews,
  1981), London Math. Soc. Lecture Note Ser., vol.~71, Cambridge Univ. Press,
  Cambridge, 1982, pp.~1--45.

\bibitem{Neukirch/92}
J.~Neukirch, \emph{Algebraic number theory}, Grundlehren der Mathematischen
  Wissenschaften, vol. 322, Springer-Verlag, Berlin, 1999, Translated from the
  1992 German original by N. Schappacher.

\bibitem{NV/17}
S.~Nugent and J.~Voight, \emph{On the arithmetic dimension of triangle groups},
  Math. Comp. \textbf{86} (2017), 1979--2004.

\bibitem{Petersson/37}
H.~Petersson, \emph{\"{U}ber die eindeutige {B}estimmung und die
  {E}rweiterungsf\"{a}higkeit von gewissen {G}renzkreisgruppen}, Abh. Math.
  Sem. Univ. Hamburg \textbf{12} (1937), 180--199.

\bibitem{Sah/69}
C.-h. Sah, \emph{Groups related to compact {R}iemann surfaces}, Acta Math.
  \textbf{123} (1969), 13--42.

\bibitem{Schmutz/93}
P.~Schmutz, \emph{Riemann surfaces with shortest geodesic of maximal length},
  Geom. Funct. Anal. \textbf{3} (1993), 564--631.

\bibitem{SchallerWolfart/00}
P.~Schmutz~Schaller and J.~Wolfart, \emph{Semi-arithmetic {F}uchsian groups and
  modular embeddings}, J. London Math. Soc. \textbf{61} (2000), 13--24.

\bibitem{Serre/90}
J.-P. Serre, \emph{A letter as an appendix to the square-root parameterization
  paper of {A}bhyankar}, Algebraic geometry and its applications ({W}est
  {L}afayette, {IN}, 1990), Springer, New York, 1994, pp.~85--88.

\bibitem{Sims/94}
C.~C. Sims, \emph{Computation with finitely presented groups}, Encyclopedia of
  Mathematics and its Applications, vol.~48, Cambridge University Press,
  Cambridge, 1994.

\bibitem{Singerman/72}
D.~Singerman, \emph{Finitely maximal {F}uchsian groups}, J. London Math. Soc.
  (2) \textbf{6} (1972), 29--38.

\bibitem{Sinkov/37}
A.~Sinkov, \emph{On the group-defining relations {$(2,3,7;p)$}}, Ann. of Math.
  (2) \textbf{38} (1937), 577--584.

\bibitem{Takeuchi/69}
K.~Takeuchi, \emph{On some discrete subgroups of {${\rm SL}_{2}(R)$}}, J. Fac.
  Sci. Univ. Tokyo Sect. I \textbf{16} (1969), 97--100.

\bibitem{Takeuchi/75}
\bysame, \emph{A characterization of arithmetic {F}uchsian groups}, J. Math.
  Soc. Japan \textbf{27} (1975), 600--612.

\bibitem{Takeuchi/77}
\bysame, \emph{Arithmetic triangle groups}, J. Math. Soc. Japan \textbf{29}
  (1977), 91--106.

\bibitem{Takeuchi/77JFS}
\bysame, \emph{Commensurability classes of arithmetic triangle groups}, J. Fac.
  Sci. Univ. Tokyo Sect. IA Math. \textbf{24} (1977), 201--212.

\bibitem{Vogeler/03}
R.~Vogeler, \emph{{On the geometry of {H}urwitz surfaces}}, Ph.D. thesis,
  Florida State University, 2003.

\bibitem{Woods/01}
K.~Woods, \emph{Lengths of systoles on tileable hyperbolic surfaces}, Tech.
  Report MSTR 00-09, Rose-Hulman Institute of Technology, 2001.

\end{thebibliography}

\end{document}